\theoremstyle{plain}
\newtheorem{theorem}{Theorem}[section]
\newtheorem{lemma}[theorem]{Lemma}
\newtheorem{prop}[theorem]{Proposition}
\newtheorem{fact}[theorem]{Fact}
\newtheorem{example}[theorem]{Example}
\newtheorem{cor}[theorem]{Corollary}
\newtheorem{obs}[theorem]{Observation}
\newtheorem{question}{Question}
\theoremstyle{definition}
\newtheorem{definition}[theorem]{Definition}
\newtheorem*{ack}{Acknowledgements}
\newtheorem*{claim}{Claim}
\DeclareMathOperator*{\diff}{\mathbb{D}}
\DeclareMathOperator*{\diffd}{\mathbb{D}^\ast}
\newcommand{\N}{\mathbb{N}}
\newcommand{\C}{\mathcal{C}}
\newcommand{\om}{\omega}
\newcommand{\ep}{\varepsilon}
\newcommand{\upto}{\upharpoonright}
\newcommand{\tto}{\rightrightarrows}
\newcommand{\omf}{\om^\rightarrow}
\newcommand{\omb}{\om^\leftarrow}
\newcommand{\fr}{{}^{\frown}}
\newcommand{\undef}{{\uparrow}}
\newcommand{\join}{\bigsqcup}
\newcommand{\ck}{\om_1^{\rm ck}}
\newcommand{\lnp}{\mbox{-}{\sf LNP}}
\newcommand{\lnpwo}{\mbox{-}{\sf LNP}_{\sf WO}}
\newcommand\tboldsymbol[1]{%
\protect\raisebox{0pt}[0pt][0pt]{%
$\underset{\widetilde{}}{\mathbf{#1}}$}\mbox{\hskip 1pt}}
\newcommand{\tpbf}[1]{\tboldsymbol{#1}}
\newcommand{\bfS}{\tpbf{\Sigma}}
\newcommand{\bfP}{\tpbf{\Pi}}
\newcommand{\pcolon}{\colon\!\!\!\subseteq}
\title{On some topics around the Wadge rank $\omega_2$}
\author[T. Kihara]{Takayuki Kihara}
\begin{document}

\begin{abstract}
Kechris and Martin showed that the Wadge rank of the $\om$-th level of the decreasing difference hierarchy of coanalytic sets is $\om_2$ under the axiom of determinacy.
In this article, we give an alternative proof of the Kechris-Martin theorem, by understanding the $\om$-th level of the decreasing difference hierarchy of coanalytic sets as the (relative) hyperarithmetical processes with finite mind-changes.
Based on this viewpiont, we also examine the gap between the increasing and decreasing difference hierarchies of coanalytic sets by relating them to the $\Pi^1_1$- and $\Sigma^1_1$-least number principles, respectively.
We also analyze Weihrauch degrees of related principles.
\end{abstract}

\maketitle

\section{Introduction}

\subsection{Summary}

In this article, we investigate topological complexity of sets in the difference hierarchy of coanalytic sets.
For a finite sequence $(A_m)_{m<n}$ of sets, its difference $\diff_{m\leq n}A_m$ is defined as follows:
\[\diff_{m\leq n}A_m=A_n\setminus(A_{n-1}\setminus(\dots\setminus (A_1\setminus A_0))).\]

One important aspect of the finite difference operator is that one can use this to represent exactly all finite Boolean combinations, and another is that it has a natural algorithmic interpretation, as we will see later.
There are two ways of extending the difference operator to certain transfinite sequences of sets.
The first operator $\diff$ is applicable to increasing sequences of sets, and the second one $\diffd$ is applicable to decreasing sequences; see Section \ref{preliminaries}.

For a class $\Gamma$ of sets, let us define $D_\xi(\Gamma)$ as the collection of all sets of the form $\diff_{\eta<\xi}A_\eta$ for some increasing sequence $(A_\eta)_{\eta<\xi}$ of $\Gamma$-sets, and $D^\ast_\xi(\Gamma)$ as the the collection of all sets of the form $\diff_{\eta<\xi}^\ast B_\eta$ for some decreasing sequence $(B_\eta)_{\eta<\xi}$ of $\Gamma$-sets.
Then, $(D_\xi(\Gamma))_{\xi<\om_1}$ is called the {\em increasing difference hierarchy} of $\Gamma$ sets, and $(D^\ast_\xi(\Gamma))_{\xi<\om_1}$ is called the {\em decreasing difference hierarchy} of $\Gamma$ sets.
In this article, we give a detailed analysis of these hierarchies for the case where $\Gamma$ is the collection of all coanalytic sets, i.e., $\Gamma=\tpbf{\Pi}^1_1$.

The formal definition (see Section \ref{preliminaries}) of the transfinite levels of the difference hierarchy is rather non-intuitive.
In order to make the meaning of the definition clearer, we describe a computational interpretation of the difference hierarchy, which is much easier to understand.
It is well-known that $\Pi^1_1$ is a higher analog of computable enumerability (based on a certain kind of ordinal-step computability; see e.g.~\cite{HinmanBook,SacksBook}).
As $\Delta^1_1$ is also known as hyperarithmetic, let us call a higher analog of computability by {\em hyp-computability} (so, one may refer to $\Delta^1_1$ as hyp-finite and $\Pi^1_1$ as hyp-c.e.)
Then, roughly speaking:
\begin{itemize}
\item[(1)]
The $\eta$-th level $D_\eta(\Pi^1_1)$ of the increasing difference hierarchy can be viewed as {\em hyp-computability with finite mind-changes along a countdown starting from $\eta$}.
\end{itemize}

More precisely, $A\in D_\eta(\Pi^1_1)$ if and only if there exists a hyp-computable learner guessing whether $n\in A$ or not through the following trial-and-error process: At first the ordinal $\eta$ is displayed in the countdown indicator, and the learner guesses $n\not\in A$, but during the process, the learner can change her mind and make another guess.
Each time the learner changes her mind, the learner has to choose some smaller ordinal than the current value displayed in the countdown indicator.
This newly chosen ordinal will be the next value displayed in the indicator.
As there is no infinite decreasing sequence of ordinals, this guarantees that the learner changes her mind at most finitely often.

This is a higher analog of ``computability with finite mind-changes along an ordinal countdown,'' which has been studied in various contexts, such as computational learning theory, see e.g.~\cite{FS93,AJS99}.
This notion must not be confused with {\em hyp-computability with ordinal mind-changes}, which corresponds to the decreasing difference hierarchy.
Indeed:
\begin{itemize}
\item[(2)]
The $\eta$-th level $D_\eta^\ast(\Pi^1_1)$ of the decreasing difference hierarchy can be thought of as {\em hyp-computability with at most $\eta$ mind-changes}.
\end{itemize}

To be more precise, as before, the hyp-computable learner guesses $n\not\in A$ at first, but during the process, the learner can change her mind and make another guess.
However, since this is an ordinal step computation, the learner has the opportunity to change her mind ordinal many times.
At a limit step, the learner may have changed her mind unboundedly, in which case her guess is reset to state ``$n\not\in A$'' (as in infinite time Turing computation \cite{Carl19}).
During the computation, the number of mind-changes must be kept below $\eta$.
However, if it reaches $\eta$, the learner has to terminate the process with the final guess ``$n\not\in A$''.

In particular, the ambiguous class $\Delta(D_\omega^\ast(\Pi^1_1))$ of the $\om$-th level of the decreasing difference hierarchy corresponds to {\em hyp-computability with finite mind-changes}, where $\Delta(\Gamma)=\Gamma\cap\neg\Gamma$.
Hence, ${\sf Diff}(\tpbf{\Pi}^1_1)\subseteq \Delta(D_\om^\ast(\tpbf{\Pi}^1_1))$, where ${\sf Diff}(\Gamma)$ stands for the whole increasing difference hierarchy $\bigcup_{\xi<\om_1}D_\xi(\Gamma)$.
Similarly, the whole decreasing difference hierarchy ${\sf Diff}^\ast(\Pi^1_1)=\bigcup_{\xi<\om_1}D_\xi^\ast(\Gamma)$ can be interpreted as {\em hyp-computability with a fixed countable ordinal mind-changes}.
A higher analog of the limit lemma (due to Monin; see \cite[Proposition 6.1]{BGM17}) also shows that hyp-computability with ordinal mind-changes corresponds to the sets which are $\Delta^0_1$ relative to sets in $\Pi^1_1\cup\Sigma^1_1$.
In summary, we get the following inclusions:
\[D_n(\tpbf{\Pi}^1_1)=D^\ast_n(\tpbf{\Pi}^1_1)\subsetneq \dots\subsetneq{\sf Diff}(\tpbf{\Pi}^1_1)\subseteq\Delta(D_\om^\ast(\tpbf{\Pi}^1_1))\subsetneq\dots\subsetneq{\sf Diff}^\ast(\tpbf{\Pi}^1_1)\subseteq\tpbf{\Delta}^0_1(\tpbf{\Pi}^1_1\cup\tpbf{\Sigma}^1_1),\]
where $\tpbf{\Delta}^0_1(\tpbf{\Pi}^1_1\cup\tpbf{\Sigma}^1_1)$ is the pointclass consisting of all sets which are $\tpbf{\Delta}^0_1$ relative to sets in $\tpbf{\Pi}^1_1\cup\tpbf{\Sigma}^1_1$; see Section \ref{sec:beyond-decreasing}.

So far, we have introduced two hierarchies of length $\om_1$; however, a question arises here:
Is it really the case that a hyp-computable procedure with finite mind-changes is always along some {\em countable} ordinal (i.e., some ordinal below $\om_1$) countdown?
Surprisingly, the answer is no.
On the one hand, Fournier \cite{FoPhD} showed that the Wadge rank of $D_{1+\alpha}(\tpbf{\Pi}^1_1)$ is $\phi_{\om_1}(\alpha)$ for $\alpha<\om_1$, where $\phi_{\om_1}$ is the $\om_1$-st Veblen function of base $\om_1$.
Hence, the Wadge rank of ${\sf Diff}(\tpbf{\Pi}^1_1)$ is $\phi_{\om_1}(\om_1)$.
On the other hand, according to Steel \cite{St81}, Kechris and Martin showed that the Wadge rank of $D_\om^\ast(\tpbf{\Pi}^1_1)$ is $\om_2$ under the axiom of determinacy.

\begin{theorem}[Kechris-Martin (unpublished); see Steel \cite{St81}]\label{thm:main-theorem}
Under the axiom of determinacy {\sf AD}, the order type of the Wadge degrees of $\Delta(D_\om^\ast(\tpbf{\Pi}^1_1))$ sets is $\om_2$.
\end{theorem}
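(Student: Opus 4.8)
The plan is to bound the Wadge rank from above and below separately, using the Suslin representation of the class for the upper bound and the {\sf AD}-combinatorics of $\om_1$ for the lower bound. Write $\Gamma=\Delta(D_\om^\ast(\tpbf{\Pi}^1_1))$. Since the Wadge degrees are wellordered under {\sf AD}, it suffices to prove (i) every $A\in\Gamma$ has Wadge rank strictly below $\om_2$, and (ii) there are sets in $\Gamma$ of Wadge rank cofinal in $\om_2$; together these force the order type of the Wadge degrees of $\Gamma$-sets to be exactly $\om_2$.

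For the upper bound (i), I would first observe that every $A\in\Gamma$ is simultaneously $\om_1$-Suslin and co-$\om_1$-Suslin. Indeed, $\tpbf{\Pi}^1_1$ is $\om_1$-Suslin via its canonical tree of rank-witnesses (and $\tpbf{\Sigma}^1_1$ is $\om$-Suslin), and the class of $\om_1$-Suslin sets is closed under the countable Boolean operations used to form the $\om$-th decreasing difference, so $D_\om^\ast(\tpbf{\Pi}^1_1)\subseteq\{\om_1\text{-Suslin sets}\}$. Applying this to $\neg A$ as well (which lies in $D_\om^\ast(\tpbf{\Pi}^1_1)$ by the definition of $\Delta$) yields a co-$\om_1$-Suslin representation of $A$. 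I would then invoke the standard bound that a set which is both $\kappa$-Suslin and co-$\kappa$-Suslin has Wadge rank $<\kappa^+$, proved by a Wadge-game argument in which the comparison game is determined and the rank function is read off the two Suslin trees. With $\kappa=\om_1$ this gives Wadge rank $<\om_2$ for every member of $\Gamma$.

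For the lower bound (ii), I would translate ordinals below $\delta^1_2=\om_2$ into finite-mind-change $\tpbf{\Pi}^1_1$-procedures using the computational reading of $D_\om^\ast(\tpbf{\Pi}^1_1)$ recalled in the introduction. The key {\sf AD}-input is that $\delta^1_2=\om_2$ is the supremum of the lengths of $\tpbf{\Delta}^1_2$-wellfounded relations, which by the Kunen--Martin analysis are generated from $\tpbf{\Pi}^1_1$-norms through ultrapowers of $\om_1$ by the normal (club) measure, using the strong partition relation $\om_1\to(\om_1)^{\om_1}$. The conceptual point is that a single finite-mind-change procedure, running along an ordinal-stage computation with limit-resets, can already survey all of $\om_1$ even though each individual $\tpbf{\Pi}^1_1$-rank is countable; the ultrapower of these surveys is what reaches $\om_2$. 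For each $\xi<\om_2$ I would fix such a representation of $\xi$ and unfold it into a learner making finitely many mind-changes, each triggered by a $\tpbf{\Pi}^1_1$-event whose rank tracks one coordinate of the ultrapower code, producing a set $A_\xi\in\Gamma$ together with a strategy witnessing $A_\eta<_W A_\xi$ for $\eta<\xi$, so that the Wadge ranks of the $A_\xi$ are cofinal in $\om_2$.

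The main obstacle is step (ii): one must verify that the unfolding genuinely stays within the $\om$-th level $D_\om^\ast(\tpbf{\Pi}^1_1)$ --- that is, that the number of mind-changes remains finite along the ordinal-stage computation with limit-resets --- rather than leaking into a higher level or into the full hierarchy ${\sf Diff}^\ast(\tpbf{\Pi}^1_1)$, while still realizing ranks all the way up to $\om_2$. Controlling the Wadge-strictness and cofinality of the $A_\xi$ simultaneously with this level constraint, and matching the ultrapower representation of $\delta^1_2$ to the finite-mind-change bookkeeping, is where the bulk of the work lies; the gap to be crossed is precisely the one between Fournier's value $\phi_{\om_1}(\om_1)$ for ${\sf Diff}(\tpbf{\Pi}^1_1)$ and the target $\om_2$.
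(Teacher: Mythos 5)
Your upper bound rests on a ``standard bound'' that does not exist: it is false that a set which is both $\kappa$-Suslin and co-$\kappa$-Suslin has Wadge rank below $\kappa^+$. The case $\kappa=\om$ already refutes it: the sets that are both $\om$-Suslin and co-$\om$-Suslin are exactly the Borel sets (Suslin's theorem), and the Borel Wadge hierarchy has length far beyond $\om_1$ --- already the Wadge ranks of $\tpbf{\Delta}^0_3$ sets exceed $\om_1$, since $\tpbf{\Delta}^0_2$ alone carries $\om_1$ distinct Wadge degrees. One level up, which is your case $\kappa=\om_1$: under {\sf AD} the $\om_1$-Suslin sets are exactly the $\tpbf{\Sigma}^1_2$ sets, so ``$\om_1$-Suslin and co-$\om_1$-Suslin'' is all of $\tpbf{\Delta}^1_2$, whose Wadge hierarchy extends far past $\om_2$. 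Worse, your own closure argument (the $\om_1$-Suslin sets are closed under the countable unions and intersections used to form decreasing differences) applies verbatim to every level $D^\ast_\eta(\tpbf{\Pi}^1_1)$ with $\eta<\om_1$, and indeed to the whole $\sigma$-algebra generated by $\tpbf{\Pi}^1_1$; so if your bound were true, every set in $\Delta(D^\ast_{\om+1}(\tpbf{\Pi}^1_1))$, in ${\sf Diff}^\ast(\tpbf{\Pi}^1_1)$, and in $\tpbf{\Delta}^0_1(\tpbf{\Pi}^1_1\cup\tpbf{\Sigma}^1_1)$ would also have Wadge rank below $\om_2$, contradicting both the strictness of the hierarchy above the $\om$-th level and Theorem \ref{thm:Wadge-rank-om1} of the paper (the Wadge rank of $\Delta(D^\ast_{\om+1}(\tpbf{\Pi}^1_1))$ exceeds $\om_2\cdot\om_1$). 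The Suslin representation is too coarse to detect what is special about the $\om$-th level, and that specialness is precisely what any upper-bound proof must capture. (You may be thinking of the Kunen--Martin theorem, but that bounds lengths of $\kappa$-Suslin wellfounded relations; the Wadge ordering below a $\tpbf{\Delta}^1_2$ set is not $\om_1$-Suslin, since verifying $B\leq_{\sf W}A$ costs an extra real quantifier over a $\tpbf{\Delta}^1_2$ matrix.)

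For contrast, the paper's upper bound isolates a property that genuinely singles out the $\om$-th level: $D_\om^\ast(\tpbf{\Pi}^1_1)$ is the \emph{minimal} nonselfdual Borel-Wadge pointclass strictly closed under $\om_1$-prewellordered coproduct (proved via a wellfounded tree representation of finite-mind-change processes, Becker's uniformization fact, and the recursion theorem); minimality forces any $A\in\Delta(D_\om^\ast(\tpbf{\Pi}^1_1))$ of Wadge rank exactly $\om_2$ to generate a pointclass $\Gamma_A$ failing $\om_1$-pwo-coproduct closure while retaining $(<\om_1)$-coproduct closure, and a Lusin-separation argument then yields a cofinal $\om_1$-sequence in the Borel-Wadge degrees of $\Delta_A$, contradicting ${\sf cf}(\om_2)=\om_2$. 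Your lower bound, via $\delta^1_2=\om_2$ and Kunen--Martin representations, points toward Steel's and Fournier's published argument but is, as you acknowledge, not carried out (the paper instead derives it from a Solovay game together with the same coproduct-closure property). So the lower half of your proposal is incomplete, and the upper half is irreparable as stated.
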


This reveals the huge gap between ${\sf Diff}(\tpbf{\Pi}^1_1)$ and $D_\om^\ast(\tpbf{\Pi}^1_1)$.
In other words, a hyp-computable procedure with finite mind-changes is not necessarily along a countable ordinal countdown.

\begin{fact}[see also Fournier \cite{Fo16}]
Under {\sf AD}, the increasing difference hierarchy of coanalytic sets is strictly included in the $\om$th level of the decreasing difference hierarchy of coanalytic sets,  i.e., ${\sf Diff}(\tpbf{\Pi}^1_1)\subsetneq\Delta(D_\om^\ast(\tpbf{\Pi}^1_1))$.
\end{fact}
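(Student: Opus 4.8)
The plan is to obtain the strict inclusion by comparing Wadge ranks, so that the only genuinely new ingredient is a single ordinal inequality. The non-strict inclusion ${\sf Diff}(\tpbf{\Pi}^1_1)\subseteq\Delta(D_\om^\ast(\tpbf{\Pi}^1_1))$ has already been recorded in the introduction (a finite-mind-change procedure running along a countable countdown is in particular a procedure with finitely many mind-changes), so the task reduces to exhibiting a set that lies in $\Delta(D_\om^\ast(\tpbf{\Pi}^1_1))$ but not in ${\sf Diff}(\tpbf{\Pi}^1_1)$.

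First I would invoke the two rank computations available above. By Fournier \cite{FoPhD}, the Wadge rank of $D_{1+\alpha}(\tpbf{\Pi}^1_1)$ is $\phi_{\om_1}(\alpha)$, and hence the order type of the Wadge degrees realized inside the whole increasing hierarchy ${\sf Diff}(\tpbf{\Pi}^1_1)$ is $\phi_{\om_1}(\om_1)$; in particular every set in ${\sf Diff}(\tpbf{\Pi}^1_1)$ has Wadge rank strictly below $\phi_{\om_1}(\om_1)$. By the Kechris--Martin theorem (Theorem \ref{thm:main-theorem}), the order type of the Wadge degrees of $\Delta(D_\om^\ast(\tpbf{\Pi}^1_1))$ is $\om_2$. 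Thus, once we know $\phi_{\om_1}(\om_1)<\om_2$, the class $\Delta(D_\om^\ast(\tpbf{\Pi}^1_1))$ must realize a Wadge rank equal to $\phi_{\om_1}(\om_1)$, and a set of that rank necessarily falls outside ${\sf Diff}(\tpbf{\Pi}^1_1)$, which gives the strictness.

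The heart of the proof is therefore the ordinal inequality $\phi_{\om_1}(\om_1)<\om_2$, and I would establish it by a closure argument. We work under {\sf AD}, so $\om_2$ is a regular cardinal (Solovay). Moreover $\om_2$ is closed under the base function of the hierarchy, since for $\alpha<\om_2$ one has $|\om_1^\alpha|=\max(\aleph_1,|\alpha|)=\aleph_1<\aleph_2$, whence $\om_1^\alpha<\om_2$. A regular cardinal that is closed under the base function of a Veblen-style hierarchy is closed under every function of that hierarchy, in the sense that $\phi_\beta(\alpha)<\om_2$ whenever $\beta,\alpha<\om_2$: at successor stages the least fixed point above a given ordinal is a countable supremum, and all the enumeration and limit stages are suprema of fewer than $\om_2$ ordinals below $\om_2$, so regularity keeps everything below $\om_2$. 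Taking $\beta=\alpha=\om_1<\om_2$ yields $\phi_{\om_1}(\om_1)<\om_2$, and the strict inclusion follows.

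The only real obstacle is this cofinality bookkeeping in a choiceless setting: without {\sf AD} one could not simply take suprema of length $\om_1$ and stay below $\om_2$, so it is the regularity of $\om_2$, rather than a naive cardinality count, that makes the Veblen recursion close off below $\om_2$. It is worth stressing that the resulting gap is not marginal but enormous: ${\sf Diff}(\tpbf{\Pi}^1_1)$ stops at a Wadge rank of cardinality only $\aleph_1$, whereas $\Delta(D_\om^\ast(\tpbf{\Pi}^1_1))$ climbs all the way to $\om_2$, the least ordinal of cardinality $\aleph_2$.
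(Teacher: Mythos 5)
Your proposal is correct and follows essentially the same route as the paper: the Fact is there justified precisely by juxtaposing Fournier's computation of the Wadge rank $\phi_{\om_1}(\om_1)$ of ${\sf Diff}(\tpbf{\Pi}^1_1)$ with the Kechris--Martin rank $\om_2$ of $\Delta(D_\om^\ast(\tpbf{\Pi}^1_1))$, together with the inclusion ${\sf Diff}(\tpbf{\Pi}^1_1)\subseteq\Delta(D_\om^\ast(\tpbf{\Pi}^1_1))$ already recorded in the introduction. The only step the paper leaves implicit is the inequality $\phi_{\om_1}(\om_1)<\om_2$, which your closure argument via the regularity of $\om_2$ under {\sf AD} (${\sf cf}(\om_2)=\om_2$) fills in correctly.
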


The proof for the lower bound $\om_2\leq{\rm otype}_W(\Delta(D_\om^\ast(\tpbf{\Pi}^1_1)))$ in Kechris-Martin's theorem has been written down in Steel \cite[Theorem 1.2]{St81} and Fournier \cite[Proposition 5.10]{FoPhD}.
For the upper bound, only a very rough idea, no more than two lines long, is commented on by Steel \cite{St81}.
According to Steel \cite{St81}, Martin's proof of the inequality ${\rm otype}_W(\Delta(D_\om^\ast(\tpbf{\Pi}^1_1)))\leq\om_2$ is based on the analysis of the ordinal games associated to Wadge games involving sets in $\Delta(D_\om^\ast(\tpbf{\Pi}^1_1))$.
In this article, we give a somewhat constructive alternative proof of Martin's upper bound which does not use any such techniques.


As a by-product of our constructive ideas, we can give a very clear solution to Fournier's problem,
which asks if the gap between the classes ${\sf Diff}(\tpbf{\Pi}^1_1)$ and $\Delta(D_\om^\ast(\tpbf{\Pi}^1_1))$ still exists even if we weaken the determinacy hypotheses (and may assume the axiom of choice).

\begin{question}[Fournier {\cite[Question 4.6]{Fo16}}]\label{four-main-question}
Is the equality between ${\sf Diff}(\tpbf{\Pi}^1_1)$ and $\Delta(D_\om^\ast(\tpbf{\Pi}^1_1))$ consistent under weaker determinacy hypothesis?
\end{question}

To solve Question \ref{four-main-question}, we give a natural set which belongs to the $\om$-th level of the decreasing difference hierarchy, but not to the increasing difference hierarchy (see Section \ref{sec:Fournier-problem}).

\begin{theorem}\label{thm:solution-to-Fournier}
Without any extra set-theoretic hypothesis, the increasing difference hierarchy of coanalytic sets is strictly included in the $\om$-th level of the decreasing difference hierarchy of coanalytic sets, i.e., ${\sf Diff}(\tpbf{\Pi}^1_1)\subsetneq\Delta(D_\om^\ast(\tpbf{\Pi}^1_1))$ holds, constructively.
\end{theorem}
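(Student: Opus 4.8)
The plan is to exhibit an explicit set $A$ that is provably in $\Delta(D_\om^\ast(\tpbf{\Pi}^1_1))$ but provably outside ${\sf Diff}(\tpbf{\Pi}^1_1)$, constructed by hand so that no determinacy or Wadge-game machinery is needed. The guiding intuition comes from the computational interpretation in the introduction: a set in the increasing hierarchy is hyp-computable with finite mind-changes along a \emph{countable} ordinal countdown, whereas a set in $\Delta(D_\om^\ast(\tpbf{\Pi}^1_1))$ is merely hyp-computable with finitely many mind-changes, with no \emph{a priori} countable bound on the countdown ordinal. The separating object should therefore be a natural ``diagonal'' set whose correct guess, for each input, requires committing to a fresh countable ordinal that is not uniformly bounded. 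I would build $A$ as a disjoint union over the recursive indices $e$ of copies of tests, where the $e$-th copy encodes a $\Pi^1_1$ set together with its $\Sigma^1_1$ complement-witness, forcing any finite-mind-change learner on the whole of $A$ to use ordinals cofinal in $\om_1$.

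First I would give the membership $A\in\Delta(D_\om^\ast(\tpbf{\Pi}^1_1))$. Using interpretation (2), it suffices to describe a hyp-computable learner deciding $n\in A$ that uses finitely many mind-changes, where the ordinal budget is allowed to be any countable ordinal depending on $n$; equivalently, one writes $A=\diff^\ast_{m<\om}B_m$ for a decreasing sequence $(B_m)_{m<\om}$ of $\tpbf{\Pi}^1_1$ sets and likewise for $\neg A$, so that $A$ is in the ambiguous class. The construction of the $B_m$ should be direct from the definition of $A$: on the $e$-th block the learner changes its mind a number of times controlled by the height of a $\Pi^1_1$-rank, which is finite on each individual input but can be taken along an ordinal below $\om$ at level of mind-change count while the countdown ordinal itself is a countable ordinal of rank growing with $e$. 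Verifying that both $A$ and its complement admit such decreasing $\om$-presentations is the routine bookkeeping part.

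Second, and this is where the main difficulty lies, I would prove $A\notin{\sf Diff}(\tpbf{\Pi}^1_1)$. Suppose toward a contradiction that $A=\diff_{\eta<\xi}C_\eta$ for some fixed countable ordinal $\xi<\om_1$ and an increasing sequence of $\tpbf{\Pi}^1_1$ sets. By interpretation (1), this yields a single hyp-computable learner whose countdown starts from the \emph{fixed} countable ordinal $\xi$, uniformly across all inputs. The goal is to derive a contradiction by showing that on the blocks of $A$ indexed by $e$ with $\Pi^1_1$-rank exceeding $\xi$, the learner is forced to make more ordinal-descents than $\xi$ permits. The technical heart is a boundedness argument: the $\Pi^1_1$-ranks used in the blocks of $A$ are unbounded below $\om_1$, but a fixed increasing difference representation of length $\xi$ can only ``absorb'' mind-changes along a countdown of height $\xi$, and by $\tpbf{\Pi}^1_1$-boundedness (the fact that a $\tpbf{\Sigma}^1_1$-bounded family of $\Pi^1_1$-ranks stays below some countable ordinal) any such representation would bound these ranks, contradicting unboundedness. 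I expect the delicate point to be making the reduction from ``membership in $D_\xi(\tpbf{\Pi}^1_1)$'' to ``a learner with a length-$\xi$ countdown'' fully rigorous and choice-free, and pinning down precisely which boundedness lemma converts the fixed countdown height $\xi$ into a countable bound on the ranks appearing in $A$; once that correspondence is established, the diagonalization against every countable $\xi$ closes the argument without invoking determinacy.
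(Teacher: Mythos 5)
There is a genuine gap, and it sits exactly where you placed your confidence: in the construction of $A$ itself. Your set is never defined, and the shape you sketch cannot work as described. You propose a disjoint union over recursive indices $e\in\om$ of blocks, with the $\Pi^1_1$-ranks of the blocks growing with $e$ so as to be ``cofinal in $\om_1$.'' But countably many blocks, each carrying a fixed countable rank $\alpha_e$, have ranks bounded by $\alpha=\sup_e\alpha_e<\om_1$; and the classes $D_\alpha(\tpbf{\Pi}^1_1)$ are closed under countable unions of sets separated by pairwise disjoint clopen sets (replace each $A_{e,\xi}$ by $\bigcup_e(A_{e,\xi}\cap V_e)$), so the resulting $A$ would land \emph{inside} ${\sf Diff}(\tpbf{\Pi}^1_1)$ at level roughly $\alpha$, not outside it. The only way a single block avoids this collapse is if that block already witnesses the theorem on its own, which is circular. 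The unbounded ordinal data must instead be part of each \emph{input} (each input must carry a code for an arbitrary countable well-order), not be spread across countably many indices. Beyond this, both halves of your argument are deferred: the membership $A\in\Delta(D_\om^\ast(\tpbf{\Pi}^1_1))$ is called routine bookkeeping for a set that does not yet exist, and the non-membership argument rests on an unnamed boundedness lemma that you yourself flag as the delicate point --- that lemma \emph{is} the theorem, so nothing has been proved.

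For comparison, the paper's witness is $\tpbf{\Pi}^1_1\lnpwo^{\upto 2}$: the input is a pair $(x,y)$ with $y$ a code for a well-order, and the output is the parity of the $\leq_y$-rank of the $\leq_y$-least element of the $\tpbf{\Pi}^1_1$ set $P_x$ (with a default value otherwise). The upper bound is an explicit decreasing sequence of $\tpbf{\Pi}^1_1$ sets: $B_n$ is the set of pairs for which the parity of the current least element changes at least $n$ times along the hyp-approximation, quantified over $x$-computable ordinal stages, and one checks $\tpbf{\Pi}^1_1\lnpwo^{\upto 2}=\diff^\ast_{n<\om}B_n$. The lower bound makes no appeal to learners, countdowns, or boundedness at all: one shows that \emph{every} $D_\eta(\tpbf{\Pi}^1_1)$ set, for every $\eta<\om_1$, is Wadge reducible to the witness (send $x$ to a $\tpbf{\Pi}^1_1$-code of $\{n\in\N:x\in A_{|n|_\eta}\}$ paired with a fixed code of $\eta$), and then invokes the non-collapse of the increasing hierarchy --- a simple diagonalization gives a $D_{\eta+1}(\tpbf{\Pi}^1_1)$ set not Wadge reducible to any $D_\eta(\tpbf{\Pi}^1_1)$ set --- to conclude the witness lies at no countable level. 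This completeness-plus-hierarchy scheme entirely sidesteps the step you identified as delicate, namely making the correspondence between boldface difference classes on $\om^\om$ and countdown learners rigorous and choice-free. If you want to rescue your plan, replace the block structure by well-order codes in the input and replace the boundedness argument by a reduction of all levels $D_\eta(\tpbf{\Pi}^1_1)$ into your set.
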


Beyond the decreasing difference hierarchy, we also turn our attention to the inclusion ${\sf Diff}^\ast(\tpbf{\Pi}^1_1)\subseteq\tpbf{\Delta}^0_1(\tpbf{\Pi}^1_1\cup\tpbf{\Sigma}^1_1)$.
As mentioned above, the former corresponds to hyp-computability with fixed countable mind-changes, and the latter corresponds to hyp-computability with ordinal mind-changes by the relative higher limit lemma.
Then, it is natural to ask the following:

\begin{question}\label{question:beyond-difference}
Does the equality between ${\sf Diff}^\ast(\tpbf{\Pi}^1_1)$ and $\tpbf{\Delta}^0_1(\tpbf{\Pi}^1_1\cup\tpbf{\Sigma}^1_1)$ hold?
\end{question}

Our answer to Question \ref{question:beyond-difference} is that there is a huge gap between ${\sf Diff}^\ast(\tpbf{\Pi}^1_1)$ and $\tpbf{\Delta}^0_1(\tpbf{\Pi}^1_1\cup\tpbf{\Sigma}^1_1)$ (see Section \ref{sec:beyond-decreasing}), without assuming any extra set-theoretic hypothesis.

\begin{theorem}\label{thm:beyond-difference}
${\sf Diff}^\ast(\tpbf{\Pi}^1_1)\subsetneq\tpbf{\Delta}^0_1(\tpbf{\Pi}^1_1\cup\tpbf{\Sigma}^1_1)$.
\end{theorem}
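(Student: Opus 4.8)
Since the inclusion ${\sf Diff}^\ast(\tpbf{\Pi}^1_1)\subseteq\tpbf{\Delta}^0_1(\tpbf{\Pi}^1_1\cup\tpbf{\Sigma}^1_1)$ is already in hand, the plan is to produce an explicit $A\in\tpbf{\Delta}^0_1(\tpbf{\Pi}^1_1\cup\tpbf{\Sigma}^1_1)$ lying outside ${\sf Diff}^\ast(\tpbf{\Pi}^1_1)$, arguing the non-membership directly in {\sf ZFC} with no appeal to determinacy. The conceptual point is that membership in $D^\ast_\xi(\tpbf{\Pi}^1_1)$ for a fixed countable $\xi$ imposes a uniform bound $\xi$ on the ordinal mind-change complexity of points, whereas the limit-lemma class permits this complexity to climb cofinally in $\om_1$. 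A countable join does not suffice to exploit this: if $A_n\in D^\ast_{\xi_n}(\tpbf{\Pi}^1_1)$ then, by a routine padding of the witnessing decreasing sequences to common length $\sup_n\xi_n<\om_1$, one checks $\bigoplus_nA_n\in D^\ast_{\sup_n\xi_n}(\tpbf{\Pi}^1_1)\subseteq{\sf Diff}^\ast(\tpbf{\Pi}^1_1)$. Hence the witness must distribute its complexity continuously over the $\tpbf{\Pi}^1_1$-complete set $\mathrm{WO}$ of codes of countable well-orders, whose members realize order types $|x|$ cofinal in $\om_1$.

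Concretely, I would fix a sequence $(C_\alpha)_{\alpha<\om_1}$ of subsets of Baire space, uniformly recoverable together with a witnessing decreasing $\tpbf{\Pi}^1_1$ sequence from a code of $\alpha$, such that $C_\alpha\in D^\ast_\alpha(\tpbf{\Pi}^1_1)\setminus\bigcup_{\zeta<\alpha}D^\ast_\zeta(\tpbf{\Pi}^1_1)$, and set $A=\{(x,y):x\in\mathrm{WO}\text{ and }y\in C_{|x|}\}$, viewed inside a single copy of Baire space. To see $A\in\tpbf{\Delta}^0_1(\tpbf{\Pi}^1_1\cup\tpbf{\Sigma}^1_1)$, I would describe the obvious hyp-computable ordinal-mind-change learner: using the oracle it settles the question $x\in\mathrm{WO}$ (which is $\tpbf{\Pi}^1_1$, its negation $\tpbf{\Sigma}^1_1$), outputs ``$\notin A$'' and halts on the ill-founded side, and on the well-founded side runs the canonical $|x|$-length decreasing-difference process deciding $y\in C_{|x|}$. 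Because $|x|$ is a genuine ordinal this process is well-founded and the learner's guess stabilizes on every input, so the relative higher limit lemma (Monin; \cite{BGM17}) places $A$ in $\tpbf{\Delta}^0_1(\tpbf{\Pi}^1_1\cup\tpbf{\Sigma}^1_1)$.

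For the lower bound I would assume, toward a contradiction, that $A\in D^\ast_\xi(\tpbf{\Pi}^1_1)$ for some fixed $\xi<\om_1$ and take $\alpha=\xi+1$. Choosing $x_\alpha\in\mathrm{WO}$ with $|x_\alpha|=\alpha$, the continuous section map $y\mapsto(x_\alpha,y)$ exhibits $C_\alpha$ as a continuous preimage of $A$. Since $D^\ast_\xi(\tpbf{\Pi}^1_1)$ is closed under continuous preimages --- the operator $\diffd$ commutes with preimages and continuous preimages of $\tpbf{\Pi}^1_1$ sets are $\tpbf{\Pi}^1_1$ --- this forces $C_\alpha\in D^\ast_\xi(\tpbf{\Pi}^1_1)$, contradicting $C_\alpha\notin D^\ast_\zeta(\tpbf{\Pi}^1_1)$ for $\zeta<\alpha$. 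Thus $A\notin\bigcup_{\xi<\om_1}D^\ast_\xi(\tpbf{\Pi}^1_1)={\sf Diff}^\ast(\tpbf{\Pi}^1_1)$, completing the strict inclusion.

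The principal obstacle I anticipate is the uniform {\sf ZFC} construction of the level witnesses $(C_\alpha)$: one must obtain, uniformly in a code for $\alpha$, both a length-$\alpha$ decreasing $\tpbf{\Pi}^1_1$ representation (so that the learner of the upper bound is genuinely definable) and a proof that $C_\alpha$ sits at level exactly $\alpha$, i.e.\ that the decreasing difference hierarchy of $\tpbf{\Pi}^1_1$ is proper at every countable level. Unlike the increasing hierarchy, whose properness is classical, this requires care at the transfinite levels and should be derived from the closure and generalized-reduction properties of the non-self-dual pointclass $\tpbf{\Pi}^1_1$ (possibly via a duality with the increasing hierarchy), keeping the argument entirely free of determinacy --- the latter entering only in the exact Wadge-rank computations such as those of Kechris--Martin and Fournier. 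A secondary point of care is verifying that the glued learner really stabilizes on the ill-founded inputs, so that $A$ does not inadvertently escape $\tpbf{\Delta}^0_1(\tpbf{\Pi}^1_1\cup\tpbf{\Sigma}^1_1)$.
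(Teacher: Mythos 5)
You have the right overall architecture --- an explicit {\sf ZFC} witness obtained by amalgamating level-hardness over the $\tpbf{\Pi}^1_1$-complete set ${\sf WO}$, a lower bound via closure of $D^\ast_\xi(\tpbf{\Pi}^1_1)$ under continuous preimages plus properness of the hierarchy, and an upper bound via a stabilizing relativized learner --- but the proof has a genuine hole at its center: the uniform family $(C_\alpha)_{\alpha<\om_1}$ of exact-level witnesses $C_\alpha\in D^\ast_\alpha(\tpbf{\Pi}^1_1)\setminus\bigcup_{\zeta<\alpha}D^\ast_\zeta(\tpbf{\Pi}^1_1)$, recoverable together with witnessing decreasing $\tpbf{\Pi}^1_1$ sequences from codes of $\alpha$ (and depending only on the order type $|x|$, as your definition of $A$ requires). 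You flag this yourself as the principal obstacle, but without it nothing survives: if the $C_\alpha$ are chosen abstractly (say by {\sf AC}) there is no reason whatsoever for $A$ to lie in $\tpbf{\Delta}^0_1(\tpbf{\Pi}^1_1\cup\tpbf{\Sigma}^1_1)$, and the learner in your upper bound cannot even be written down. Note also that what you need is strictly stronger than what the paper's proof needs: the paper's contradiction uses only, for a single $\eta$, the bare existence of some $D^\ast_{\eta+1}(\tpbf{\Pi}^1_1)$ set not Wadge reducible to any $D^\ast_\eta(\tpbf{\Pi}^1_1)$ set (a non-uniform diagonalization against a universal set), whereas you need exact-level classification at every level including limits, uniformly in codes, with code-invariance. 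So, as written, the proposal reduces the theorem to an unproved --- and not obviously easier --- uniform hierarchy theorem.

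The paper sidesteps precisely this obstacle by choosing a witness for which hardness at every level is automatic: the two-valued least number principle $\tpbf{\Sigma}^1_1\lnpwo^{\upto 2}$, with value ${\sf par}_y(\min_y S_x)$ when $y\in{\sf WO}$ and $S_x\not=\emptyset$, and $0$ otherwise. Proposition \ref{prop:lnpwo-non-approximable2} shows that every $D^\ast_\eta(\tpbf{\Pi}^1_1)$ set $B=\diffd_{\xi<\eta}B_\xi$ Wadge-reduces to it: map $x$ to the pair consisting of a $\tpbf{\Sigma}^1_1$-code of the escape set $U_x=\{n:x\notin B_{|n|_\eta}\}$ and a fixed code $\bar{\eta}$ of $\eta$; the $\leq_\eta$-rank of $\min_\eta U_x$ is $\gamma+1$ when $\gamma=\max\{\xi:x\in B_\xi\}$ exists and is a limit otherwise, so its parity computes $B(x)$. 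Thus no exact-level witnesses are required, only the bare properness mentioned above, and the remainder of the paper's proof is exactly your closure-plus-contradiction argument. For the upper bound, the paper does by hand what you delegate to the relative higher limit lemma: Proposition \ref{prop:lnpwo-approximable2} uses a $\tpbf{\Sigma}^1_1$-bounding argument to show that $\min_y S_x$ stabilizes by a stage recursive in $x\oplus y$, which converts the learner description into explicit $\tpbf{\Sigma}^0_1(\bfS^1_1\cup\bfP^1_1)$ definitions of both truth values. If you want to salvage your write-up, the repair is to replace your exact-level sets $C_\alpha$ by sets that are merely hard for $D^\ast_\alpha(\tpbf{\Pi}^1_1)$ uniformly in a code of $\alpha$ --- and that is precisely what the least-number-principle coding provides for free.
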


\subsection{Preliminaries}\label{preliminaries}

For the basics of (effective) descriptive set theory, we refer the reader to Moschovakis \cite{mos07}.
For background and basic facts about Wadge degrees, see \cite{AnLo12}.
For higher computability, see e.g.~\cite{HinmanBook,SacksBook,BGM17}.

We use $\varphi_e^x$ to denote the $e$th partial computable function relative to an oracle $x$.
The least non-computable ordinal is denoted by $\om_1^{\rm ck}$.
Let ${\sf WO}\subseteq 2^{\om\times\om}\simeq 2^\om$ be the set of well-orders on $\N$.
For each $y\in{\sf WO}$, we also write $(\N,\leq_y)$ for the corresponding well-ordered set.
We use $|y|$ to denote the order type of $y$, and for each $a\in\N$, define $|a|_y$ as the order type of $\{b\in\N:b<_ya\}$.
It is known that ${\sf WO}$ is a $\tpbf{\Pi}^1_1$-complete set.
Indeed, if $P\subseteq\om^\om$ is $\Pi^1_1$, then there exists a computable function ${\bf o}_P$ such that, for any $x\in\om^\om$, $x\in P$ if and only if ${\bf o}_P(x)\in{\sf WO}$.
We often use this reduction to approximate a $\Pi^1_1$ set and a $\Pi^1_1$ function.
For instance, if $\psi\pcolon\om\to\om$ is a partial $\Pi^1_1$ function (i.e., the graph of $\psi$ is $\Pi^1_1$), then for any $s<\om_1^{\rm ck}$ the stage $s$ approximation of $\psi[s]$ can be defined as follows:
$\psi(n)[s]\downarrow=m$ if and only if the order type of ${\bf o}_P(n,m)$ is less than $s$, where $P$ is the graph of $\psi$.

For sets $A,B\subseteq\om^\om$, we say that {\em $A$ is Wadge reducible to $B$} ({\em written $A\leq_{\sf W}B$}) if there exists a continuous function $\theta\colon\om^\om\to\om^\om$, we have $A=\theta^{-1}[B]$.
A set $A$ is {\em selfdual} if $\neg A\leq_{\sf W}A$, where $\neg A$ is the complement of $A$.
For a pointclass $\Gamma$, we use $\neg\Gamma$ to denote its dual pointclass, that is, $\neg\Gamma=\{\neg A:A\in\Gamma\}$.
By Wadge's lemma \cite{Wadge83,AnLo12}, the Wadge degrees are semi-well-ordered under ${\sf AD}$, where ${\sf AD}$ stands for the axiom of determinacy.
Then, to each set $A\subseteq\om^\om$ one can assign the order type $|A|_{\sf W}$ of the collection of all nonselfdual sets $B\leq_{\sf W}A$, which is called the Wadge rank of $A$.

\section{Difference hierarchy}

\subsection{Difference of functions}

In this article, we deal with two {\em difference operators} $\diff$ and $\diff^\ast$.
However, the original definition of the increasing and decreasing difference operators is asymmetrical and rather hard to understand.
For the sake of clarity, we consider the difference operators for functions instead of sets, which yield a symmetric definition of the hierarchies.

Let $X$ and $Y$ be Polish spaces.
A sequence $(f_\xi)_{\xi<\eta}$ of partial functions $f_\xi\pcolon X\to Y$ is {\em dom-increasing} if $({\rm dom}(f_\xi))_{\xi<\eta}$ is increasing; and {\em dom-decreasing} if $({\rm dom}(f_\xi))_{\xi<\eta}$ is decreasing.
Fix $c\in Y\cup\{\undef\}$, where the symbol $\undef$ stands for ``undefined''.

\begin{definition}\label{def:diff-for-functions}
For a dom-increasing sequence $(f_\xi)_{\xi<\eta}$ of partial functions, we define $c\diff_{\xi<\eta}f_\xi\pcolon X\to Y$ as follows:
\[c\diff_{\xi<\eta}f_\xi(x)=
\begin{cases}
f_\gamma(x),&\mbox{ if $\gamma=\min\{\xi<\eta:x\in{\rm dom}(f_\xi)\}$},\\
c,&\mbox{ if no such $\gamma$ exists.}
\end{cases}
\]

For a dom-decreasing sequence $(f_\xi)_{\xi<\eta}$ of partial functions, we define $c\diff_{\xi<\eta}^\ast f_\xi\pcolon X\to Y$ as follows:
\[c\diffd_{\xi<\eta}f_\xi(x)=
\begin{cases}
f_\gamma(x),&\mbox{ if $\gamma=\max\{\xi<\eta:x\in{\rm dom}(f_\xi)\}$},\\
c,&\mbox{ if no such $\gamma$ exists.}
\end{cases}
\]
\end{definition}

Note that if $c\in Y$ then the resulting function is always total.
The usual increasing and difference hierarchies of $\tpbf{\Pi}^1_1$ sets are obtained by putting $c=0$ and considering constant functions $f_\eta\colon x\mapsto i$ with $\tpbf{\Pi}^1_1$ domains where $i\in\{0,1\}$; see Section \ref{sec:diff-for-sets}.

Hereafter, to simplify our argument, we assume $Y\subseteq\om$.
Let $cD_\eta(\tpbf{\Pi}^1_1)$ be the class of all functions of the form $c\diff_{\xi<\eta}f_\xi$ for a dom-increasing sequence $(f_\xi)_{\xi<\eta}$ of partial $\tpbf{\Pi^1_1}$ functions.
We also define $cD^\ast_\eta(\tpbf{\Pi}^1_1)$ in a similar manner.
To give a computability-theoretic interpretation of Definition \ref{def:diff-for-functions}, we also consider the lightface version of these classes.
For $\eta<\om_1^{\rm ck}$, let $cD_\eta({\Pi}^1_1)$ be the class of all functions of the form $c\diff_{\xi<\eta}f_\xi$ for a uniform $\Pi^1_1$ dom-increasing sequence $(f_\xi)_{\xi<\eta}$ of partial $\Pi^1_1$ functions.
We also define $cD^\ast_\eta({\Pi}^1_1)$ in a similar manner.
Here, a sequence $(f_\xi)_{\xi<\eta}$ is uniformly $\Pi^1_1$ if $\{(\xi,n,m):f_\xi(n)\downarrow=m\}$ is $\Pi^1_1$, where a computable ordinal $\xi$ is always identified with its notation; see also \ref{approx-mind-changes}.

\subsection{Approximation with mind-changes}\label{approx-mind-changes}

To explain the intuitive meaning of two difference hierarchies, we first introduce the notion of finite-change approximations.
For a detailed study of approximations with mind-changes in the context of higher computability theory, we refer the reader to Bienvenu-Greenberg-Monin \cite{BGM17}.
The results in Sections \ref{approx-mind-changes} and \ref{sec:diff-for-sets} are only used for us to get an intuition about two difference hierarchies, and will not be used in later sections.
For this reason, readers without prior knowledge of higher computability may skip Sections \ref{approx-mind-changes} and \ref{sec:diff-for-sets}.

Fix a $\Pi^1_1$ path $O_1$ through Kleene's $\mathcal{O}$ whose order type is $\ck$, and hereafter we identify $O_1$ with $\ck$.
For a function $\varphi\colon\om\times\om_1^{\rm ck}\to\om$, consider the set ${\sf mc}_\varphi(n)$ of all stages at which the value of $\varphi$ changes:
\[{\sf mc}_\varphi(n)=\{s<\ck:\varphi(n,s)\not=\varphi(n,s+1)\}\]

We say that $\varphi$ is a {\em finite-change function} if ${\sf mc}_\varphi(n)$ is a finite set for any $n\in\om$.
A function $\psi\colon\om\times\ck\to\eta$ is {\em antitone} if $s\leq t$ implies $\psi(n,s)\geq\psi(n,t)$ for any $n\in\om$.
An antitone function is a {\em countdown for }$\varphi\colon\om\times\om_1^{\rm ck}\to\om$ if for any $n\in\om$ and $s\in\ck$,
\[\varphi(n,s)\not=\varphi(n,s+1)\implies\psi(n,s)>\psi(n,s+1).\]

Observe that if $\varphi$ has a countdown, then $\varphi$ is a finite-change function.
If $\varphi$ changes at most finitely often, the limit value, $\lim_{s<t}\varphi(n,s)$, always exists, where
\[\lim_{s<t}\varphi(n,s)=m\iff \varphi(n,s)=m\mbox{ eventually holds for $s<t$}.\]

Here, we say that $A(s)$ eventually holds for $s<t$ if there exists $u<t$ such that $[u,t)\subseteq A$ holds, that is, for any $v$, $u\leq v<t$ implies $A(v)$.
We say that $\varphi$ is {\em continuous} if $\varphi(n,t)=\lim_{s<t}\varphi(n,s)$ for any limit ordinal $t<\ck$.

Let $\eta$ be a computable ordinal.
A function $\varphi\colon\om\times\ck\to\eta$ is {\em hyp-computable} if its graph is $\Pi^1_1$, where recall that $\ck$ is identified with the $\Pi^1_1$ set $O_1\subseteq\om$, and note that $\eta=\{s<\ck:s<\eta\}\subseteq O_1$.
Given $c\in\om\cup\{\undef\}$, we now show that $cD_\eta(\Pi^1_1)$ is equivalent to hyp-computability with finite mind-changes along $(\eta+1)$-countdown with the initial value $c$.


\begin{prop}\label{prop:diff-characterization1}
A function $f\pcolon\om\to\om$ belongs to $cD_\eta(\Pi^1_1)$ if and only if there exists a hyp-computable continuous function $\varphi\colon\om\times\ck\to\om$ such that for any $n\in\om$,
\begin{itemize}
\item $\varphi$ has an $(\eta+1)$-valued hyp-computable countdown,
\item $\varphi(n,0)=c$, and $f(n)=\lim_{s<\ck}\varphi(n,s)$.
\end{itemize}
\end{prop}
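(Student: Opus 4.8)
The plan is to prove the two implications separately, in each case passing between the static $\Pi^1_1$ description and the dynamic approximation by means of the stage-$s$ approximations fixed in Section~\ref{preliminaries}. For $(\Rightarrow)$, suppose $f=c\diff_{\xi<\eta}f_\xi$ for a uniformly $\Pi^1_1$ dom-increasing sequence $(f_\xi)_{\xi<\eta}$. I would track, at each stage $s<\ck$, the least index whose value has already converged: set
\[\psi(n,s)=\min\{\xi<\eta:f_\xi(n)[s]\downarrow\}\]
with $\psi(n,s)=\eta$ when that set is empty, and let $\varphi(n,s)=f_{\psi(n,s)}(n)[s]$ if $\psi(n,s)<\eta$ and $\varphi(n,s)=c$ otherwise. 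Since each predicate $f_\xi(n)[s]\downarrow$ is uniformly decidable below $\ck$ and persists once it holds, the set $\{\xi:f_\xi(n)[s]\downarrow\}$ only grows with $s$; hence $\psi(n,\cdot)$ is antitone and $(\eta+1)$-valued, and both $\varphi$ and $\psi$ have $\Pi^1_1$ (indeed hyperarithmetic) graphs by uniformity of the sequence.

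The verification of $(\Rightarrow)$ then proceeds stagewise. At stage $0$ the approximation is empty, so $\psi(n,0)=\eta$ and $\varphi(n,0)=c$. If $\psi(n,s)=\psi(n,s+1)=\beta<\eta$, then $f_\beta(n)[s]\downarrow$ already, so its approximated value is frozen and $\varphi(n,s)=\varphi(n,s+1)$; thus any change of $\varphi$ forces a strict drop of $\psi$, which is precisely the countdown condition. Continuity at a limit $t$ holds because $f_\xi(n)[t]\downarrow$ iff $f_\xi(n)[s]\downarrow$ for some $s<t$, whence $\psi(n,t)=\inf_{s<t}\psi(n,s)$ and, the index being eventually constant below $t$, $\varphi(n,t)=\lim_{s<t}\varphi(n,s)$. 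Finally, by soundness of the $\Pi^1_1$ approximation the eventual value of $\psi(n,\cdot)$ is the true $\gamma=\min\{\xi<\eta:n\in{\rm dom}(f_\xi)\}$ (or $\eta$), so $\lim_{s<\ck}\varphi(n,s)=f_\gamma(n)=f(n)$ (resp.\ $c$).

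For $(\Leftarrow)$, given a continuous hyp-computable $\varphi$ with an $(\eta+1)$-valued hyp-computable countdown $\psi$ satisfying $\varphi(n,0)=c$ and $f(n)=\lim_{s<\ck}\varphi(n,s)$, I would reconstruct the difference representation from the \emph{final} countdown value $\psi_\infty(n)=\lim_{s<\ck}\psi(n,s)$, which exists because $\psi$ is antitone and ordinal-valued. For each $\xi<\eta$ I set
\[g_\xi(n)=\varphi(n,s_\xi(n)),\qquad s_\xi(n)=\text{the least }s<\ck\text{ with }\psi(n,s)\leq\xi,\]
so that ${\rm dom}(g_\xi)=\{n:\psi_\infty(n)\leq\xi\}$. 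As the threshold raises these domains, $(g_\xi)_{\xi<\eta}$ is dom-increasing, and $\gamma:=\min\{\xi<\eta:n\in{\rm dom}(g_\xi)\}=\psi_\infty(n)$ whenever $\psi_\infty(n)<\eta$. The point of using the value at the \emph{first} stage the countdown drops to $\leq\xi$ is that at stage $s_\gamma(n)$ the countdown has already reached its minimum $\psi_\infty(n)$, so $\varphi$ undergoes no further change and $g_\gamma(n)=\varphi(n,s_\gamma(n))=\lim_{s<\ck}\varphi(n,s)=f(n)$; when $\psi_\infty(n)=\eta$ no such $\gamma$ exists and $\varphi$ never changed, so $c\diff_{\xi<\eta}g_\xi(n)=c=f(n)$. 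Hence $f=c\diff_{\xi<\eta}g_\xi$.

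I expect the genuine work to be the complexity bookkeeping, namely checking that $(g_\xi)_{\xi<\eta}$ is \emph{uniformly} $\Pi^1_1$, and this is exactly where the first-stage definition pays off. Writing
\[g_\xi(n)=m\iff\exists s<\ck\,\bigl[\psi(n,s)\leq\xi\ \wedge\ (\forall s'<s)\,\psi(n,s')>\xi\ \wedge\ \varphi(n,s)=m\bigr],\]
one uses the standard fact that a total function with $\Pi^1_1$ graph and range inside a computable ordinal (resp.\ inside $\om$) is already $\Delta^1_1$, so the matrix is hyperarithmetic; the bounded search $\forall s'<s$ over an initial segment of $O_1$ stays hyperarithmetic, leaving a single unbounded quantifier $\exists s<\ck$, i.e.\ a $\Pi^1_1$-bounded existential, which keeps $g_\xi$ within $\Pi^1_1$ and does so uniformly in $\xi$. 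The naive alternative $g_\xi(n)=\lim_s\varphi(n,s)$ would instead introduce a tail quantifier $\forall s\geq u$ ranging over all of $O_1$, which is only $\Sigma^1_1$; sidestepping this is the main technical manoeuvre. The remaining care is the stagewise verification of continuity and the countdown inequality in $(\Rightarrow)$, which becomes routine once the persistence and limit behaviour of the approximations are in hand.
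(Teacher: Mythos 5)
Your proof is correct and follows essentially the same route as the paper: in the forward direction you define the countdown $\psi(n,s)$ as the least index whose approximation has converged by stage $s$ and let $\varphi$ output the corresponding value, and in the backward direction you recover $f_\xi(n)$ as the value of $\varphi$ at the first stage the countdown drops to $\leq\xi$, exactly as in the paper's proof. Your extra care with the continuity check at limit stages and with the $\Pi^1_1$ complexity bookkeeping (avoiding the $\Sigma^1_1$ tail quantifier) just fills in details the paper leaves implicit.
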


\begin{proof}
($\Rightarrow$)
Assume that $f=\diff_{\xi<\eta}f_\xi$ for a uniform sequence $(f_\xi)_{\xi<\eta}$ of partial $\Pi^1_1$ functions.
Fix a $\Delta^1_1$ approximation $(f_\xi[s])_{s<\ck}$ of $f_\xi$, so that $f_\xi[s]$ is a $\Delta^1_1$ function uniformly in $s<\ck$.
Then, define $\psi(n,s)=\min\{\xi<\eta:n\in{\rm dom}(f_\xi[s])\}$ if it exists; otherwise put $\psi(n,s)=\eta$.
It is clear that $\psi$ is a hyp-computable function, since given input $(n,s)$ we only need to simulate at most $\eta<\ck$ many hyp-algorithms for $\Delta^1_1$ functions $(f_\xi[s])_{\xi<\eta}$.
Then we define $\varphi(n,s)=f_{\psi(n,s)}(n)$ if $\psi(n,s)<\eta$; otherwise put $\varphi(n,s)=c$.
The function $\varphi$ is also hyp-computable.

Clearly, $\psi$ is an $(\eta+1)$-valued antitone function, which is a countdown for $\varphi$.
Let $\gamma<\eta$ be the least ordinal such that $n\in{\rm dom}(f_\gamma)$ if it exists.
Then $f(n)=f_\gamma(n)$ by the definition of the difference operator $\mathbb{D}$.
For such a $\gamma$, there exists $s_0<\ck$ such that $n\in{\rm dom}(f_\gamma[s_0])$, and for such an $s_0$, we have $\psi(n,s)=\gamma$ for any $s\geq s_0$ by minimality of $\gamma$.
Hence, $\varphi(n,s)=f_\gamma(n)=f(n)$ for any $s\geq s_0$.
This means that $\lim_{s<\ck}\varphi(n,s)=f(n)$.
If there is no such a $\gamma$, we have $\psi(n,s)=\eta$ by the definition of $\psi$, and therefore $\varphi(n,s)=c$ for any $s<\ck$.
Hence, $\lim_{s<\ck}\varphi(n,s)=c=f(n)$.

($\Leftarrow$)
Let $\varphi$ be a function in the assumption, and $\psi$ be a countdown for $\varphi$.
Given $\xi<\eta$ and $n\in\om$, if we see $\psi(n,s)\leq\xi$ for some $s<\ck$, then for the least such an $s$, define $f_\xi(n)=\varphi(n,s)$.
If there is no such an $s$, then $f_\xi(n)$ remains undefined.
Clearly, $(f_\xi)_{\xi<\eta}$ is dom-increasing.
Note that $(f_\xi)_{\xi<\eta}$ is a $\Pi^1_1$ sequence since $\varphi$ and $\psi$ are both hyp-computable.
We claim that $\diff_{\xi<\eta}f_\xi(n)=f(n)$, where $f(n)=\lim_{s<\ck}\varphi(n,s)$ by our assumption.
Let $\gamma<\eta$ be the least ordinal such that $n\in{\rm dom}(f_\gamma)$ if it exists.
Then $\diff_{\xi<\eta}f_\xi(n)=f_\gamma(n)$ by the definition of $\diff$.
By the definition of $f_\gamma$, the condition $n\in{\rm dom}(f_\gamma)$ implies that $\psi(n,s)\leq\gamma$ for some $s<\ck$, and by minimality of $\gamma$, there is no $s<\ck$ such that $\psi(n,s)<\gamma$.
Let $s_0<\ck$ be the least ordinal such that $\psi(n,s_0)=\gamma$.
Then we have $f_\gamma(n)=\varphi(n,s_0)$ by our definition of $f_\gamma$.
Since there is no $t>s_0$ such that $\psi(n,t)<\psi(n,s_0)=\gamma$, by the countdown condition, we have $\varphi(n,t)=\varphi(n,s_0)$ for any $t>s_0$.
This means that $\diff_{\xi<\eta}f_\xi(n)=f_\gamma(n)=\lim_{s<\ck}\varphi(n,s)$.
If there is no such a $\gamma$, $f_\xi(n)$ is undefined for all $\xi<\eta$, and thus, $\psi(n,s)=\eta$ for any $s<\ck$.
Since $\psi$ is a countdown for $\varphi$, we have $\varphi(n,s)=\varphi(n,0)=c$ for any $s<\ck$.
Therefore, $\diff_{\xi<\eta}f_\xi(n)=c=\lim_{s<\ck}\varphi(n,s)$.
\end{proof}

Next, let us move on to a function which may change infinitely often.
For such a function $\varphi$, in general, $\lim_{s<t}\varphi(n,s)$ does not necessarily exist.
Instead, for any constant $c\in\om$ and ordinal $\beta<\om_1^{\rm ck}$, we define 
\[
c\lim_{s<t}\varphi(n,s)=
\begin{cases}
m,&\mbox{ if }\varphi(n,s)=m\mbox{ eventually holds for $s<t$},\\
c,&\mbox{ if there exists no such $m$}.
\end{cases}
\]

We say that $\varphi$ is {\em $c$-semicontinuous} if $\varphi(n,t)=c\lim_{s<t}\varphi(n,s)$ for any limit ordinal $t<\ck$.
Note that any function $\varphi$ yields a semicontinuous function $\varphi^c$ by defining $\varphi(n,0)=c$; $\varphi^c(n,t+1)=\varphi(n,t)$ for any $t<\ck$; and $\varphi^c(n,t)=c\lim_{s<t}\varphi(n,s)$ for any limit ordinal $t<\ck$.
This is, for example, exactly the same as the behavior of infinite time Turing machines at limit steps.

Fix $c\in\om\cup\{\undef\}$, and let $\eta$ be a computable ordinal.
We characterize $cD_\eta^\ast(\Pi^1_1)$ as hyp-computability with at most $\eta$ mind-changes with the initial and reset value $c$.

\begin{prop}\label{prop:diff-characterization2}
A function $f\colon\om\to\om$ belongs to $cD_\eta^\ast(\Pi^1_1)$ if and only if there exists a hyp-computable $c$-semicontinuous function $\varphi\colon\om\times\ck\to\om$ such that for any $n\in\om$,
\begin{itemize}
\item ${\rm otype}({\tt mc}_\varphi(n))\leq\eta$,
\item $\varphi(n,0)=c$, and $f(n)=c\lim_{s<\ck}\varphi(n,s)$.
\end{itemize}
\end{prop}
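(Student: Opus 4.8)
The plan is to mirror the proof of Proposition~\ref{prop:diff-characterization1}, dualizing every ingredient: where the increasing case tracks the \emph{least} defined index by an antitone countdown and converges after finitely many changes, the decreasing case will track the \emph{greatest} defined index, which climbs upward, and will converge in the $c\lim$ sense after ordinally many changes, resetting to $c$ at limit stages. The one genuinely new tool, which has no counterpart in the finite-change setting of Proposition~\ref{prop:diff-characterization1}, is the $\Pi^1_1$-boundedness theorem; it is what guarantees that the ``reset to $c$'' happens at the right place.

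For the forward direction, assume $f=\diffd_{\xi<\eta}f_\xi$ for a uniform $\Pi^1_1$ dom-decreasing sequence $(f_\xi)_{\xi<\eta}$, and fix $\Delta^1_1$ approximations $(f_\xi[s])_{s<\ck}$, arranged so that membership of $n$ in a domain, once confirmed, is never withdrawn. Put $\psi(n,s)=\max\{\xi<\eta:n\in{\rm dom}(f_\xi[s])\}$ when this maximum exists, and let the raw guess be $f_{\psi(n,s)}(n)$ in that case and $c$ otherwise; then let $\varphi$ be the $c$-semicontinuous version of this raw function. Since confirmations only turn on, the confirmed index set at stage $s$ is non-decreasing in $s$ and is always contained in the true set $I=\{\xi<\eta:n\in{\rm dom}(f_\xi)\}$, so $\psi(n,s)$ is non-decreasing and bounded by the true maximum whenever it exists. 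If $I$ has a maximum $\gamma$, then $\gamma$ is confirmed at some stage, $\psi$ stabilises at $\gamma$, and $\varphi$ stabilises at $f_\gamma(n)=f(n)$. If $I$ has \emph{no} maximum, say $I=\{\xi<\lambda\}$ with $\lambda$ a limit, then here is where boundedness enters: the family of $\Pi^1_1$-witnesses for the statements $n\in{\rm dom}(f_\xi)$, indexed by the computable set $\{\xi<\lambda\}$, is $\Sigma^1_1$ and consists of well-founded objects, so its ranks are bounded by some $s^\ast<\ck$. Hence for every stage $s>s^\ast$ the confirmed index set already equals the maxless set $I$, so $\psi(n,s)$ is undefined and the raw guess, and therefore $\varphi$, equals $c$ from $s^\ast$ on; thus $c\lim_{s<\ck}\varphi(n,s)=c=f(n)$. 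In either case a value of $\varphi$ can only change when $\psi$ strictly increases, and the stages of strict increase embed order-preservingly into the range of $\psi$, which sits inside $\eta$; this yields ${\rm otype}({\sf mc}_\varphi(n))\le\eta$.

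For the converse, assume $\varphi$ is as in the statement, and index the sequence by the mind-change count: declare $n\in{\rm dom}(f_\xi)$ iff ${\rm otype}({\sf mc}_\varphi(n))>\xi$, and in that case let $s_\xi$ be the $\xi$-th element of ${\sf mc}_\varphi(n)$ and set $f_\xi(n)=\varphi(n,s_\xi+1)$, the value adopted just after the $(\xi+1)$-st mind-change. Requiring more mind-changes is a stronger condition for larger $\xi$, so the sequence is dom-decreasing; since $\varphi$ is hyp-computable and $\eta<\ck$, computing the order type of ${\sf mc}_\varphi(n)$ up to $\xi$ and locating its $\xi$-th element are hyp processes, so $(f_\xi)_{\xi<\eta}$ is a uniform $\Pi^1_1$ sequence. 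To check $\diffd_{\xi<\eta}f_\xi=f$, distinguish two cases according to ${\rm otype}({\sf mc}_\varphi(n))$. If it is a successor $\beta+1$, the greatest defined index is $\beta$, and $f_\beta(n)=\varphi(n,s_\beta+1)$ is precisely the final stabilised value $c\lim_{s<\ck}\varphi(n,s)=f(n)$. If it is a limit $\lambda$, then $\{\xi:n\in{\rm dom}(f_\xi)\}=\{\xi<\lambda\}$ has no maximum, so $\diffd_{\xi<\eta}f_\xi(n)=c$; and by $c$-semicontinuity the unboundedly many mind-changes below their supremum force a reset, giving $c\lim_{s<\ck}\varphi(n,s)=c=f(n)$.

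I expect the main obstacle to be the forward direction's limit bookkeeping, and specifically the boundedness step. Unlike Proposition~\ref{prop:diff-characterization1}, where the countdown makes finiteness automatic, here the maximal-index guess $f_{\psi(n,s)}(n)$ would stabilise at the ``wrong'' value (rather than resetting to $c$) whenever the sequence has repeated values along a limit of its domains; it is only the $\Pi^1_1$-boundedness of the confirmation ranks that forces the confirmed index set to reach its full maxless extent by a bounded stage and thereby triggers the reset to $c$ in time for $c\lim_{s<\ck}\varphi(n,s)=c$ to hold. Verifying the mind-change bound ${\rm otype}({\sf mc}_\varphi(n))\le\eta$ after passing to the $c$-semicontinuous version, and confirming that the reset values inserted at limit stages neither corrupt the final $c\lim$ nor inflate this order type beyond $\eta$, is the remaining point requiring care.
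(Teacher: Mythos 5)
Your overall architecture coincides with the paper's: in the forward direction you track the maximal confirmed index of the dom-decreasing approximation (equivalently, the paper's least unconfirmed index $\psi(n,s)$), guess its value, output $c$ whenever the confirmed set has no maximum, and use Spector/$\Sigma^1_1$-boundedness to show that when $\{\xi<\eta:n\in{\rm dom}(f_\xi)\}$ is maxless the confirmed set reaches its full extent by some stage $s^\ast<\ck$, so that $\varphi$ settles at $c$; your converse direction (indices given by mind-change counts, $f_\xi(n)=\varphi(n,s_\xi+1)$, case split on whether ${\rm otype}({\tt mc}_\varphi(n))$ is a successor or a limit) is exactly the paper's. The genuine gap is at the point you yourself flag as ``requiring care'': the verification that ${\rm otype}({\tt mc}_\varphi(n))\leq\eta$. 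Your one-line justification --- that $\varphi$ can only change when $\psi$ strictly increases, so the change stages embed order-preservingly into $\eta$ --- is false. The function $\varphi$ also changes when the confirmed set becomes maxless and the value is \emph{reset} to $c$, and such changes are matched by no increase of $\psi$. Concretely: (a) if the values $f_\xi(n)$ are eventually constant equal to some $v\neq c$ as $\xi$ runs up to a maxless stretch, then your $c$-semicontinuous $\varphi$ equals $v$ at the limit stage $t^\ast$ where the confirmed set becomes maxless, while $\varphi(n,t^\ast+1)=c$; hence $t^\ast\in{\tt mc}_\varphi(n)$ with no corresponding new index; (b) since you never arrange, as the paper does via the reindexing $s(\xi,t)=(\eta+1)\cdot t+\xi+1$, that confirmations arrive one per stage at strictly increasing successor stages, the confirmed set can jump from having a maximum to being maxless at a \emph{successor} stage, producing further unmatched elements of ${\tt mc}_\varphi(n)$.

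These reset-changes are precisely what the bulk of the paper's proof is about: it constructs an explicit order embedding $h\colon{\tt mc}_\varphi(n)\to\eta$ by a case analysis on $\psi(n,s)$ (successor; limit with the values $f_\xi(n)$ changing cofinally below it; limit with the values eventually constant, subdivided according to whether that constant equals $c$ and whether $\psi(n,s)$ is a limit of limits or of the form $\lambda+\om$), and this analysis leans on the successor/strictly-increasing arrangement of confirmation stages that your proposal omits. The bound does hold for your construction --- the essential points being that a reset-change can never be a limit of earlier changes (producing one requires eventual constancy of the values below it) and never immediately follows another reset-change, so it can be injected at the successor of the previous change's image, while every change occurring after a reset has image at least the supremum of the then-confirmed set --- but none of this is in your proposal, and it, rather than the boundedness step you single out as the main obstacle, is where the real work of the proposition lies.
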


\begin{proof}
($\Leftarrow$)
Let $\varphi$ be a function in the assumption, and for each $n\in\om$, let $(s^n_\xi)_{\xi<\lambda(n)}$ be the increasing enumeration of the set ${\tt mc}_\varphi(n)$ of all mind-change stages.
Since there is an order embedding of ${\tt mc}_\varphi(n)$ into $\eta$ by our assumption, we have $\lambda(n)\leq\eta$.
For any $n\in\om$ and $\xi<\lambda(n)$, define $f_\xi(n)=\varphi(n,s^n_\xi+1)$.
If $\xi\geq\lambda(n)$, $f_\xi(n)$ is undefined.
Clearly $(f_\xi)_{\xi<\eta}$ is dom-decreasing since we have ${\rm dom}(f_\xi)=\{n\in\om:\xi<\lambda(n)\}$.
Note also that  $(f_\xi)_{\xi<\eta}$ is a $\Pi^1_1$ sequence since $\varphi$ is hyp-computable, and ${\tt mc}_\varphi$ has a hyp-computable increasing enumeration.

We claim that $c\diff^\ast_{\xi<\eta}f_\xi(n)=f(n)$, where $f(n)=c\lim_{s<\ck}\varphi(n,s)$ by our assumption.
If $\lambda(n)$ is a successor ordinal, then $\gamma:=\lambda(n)-1$ is the greatest ordinal such that $n\in{\rm dom}(f_\gamma)$.
Then $c\diff^\ast_{\xi<\eta}f_\xi(n)=f_\gamma(n)$ by definition.
Then, $s_\gamma^n$ exists, and by maximality, there is no $t>s^n_\gamma$ such that $\varphi(n,t)\not=\varphi(n,t+1)$.
Hence, we have 
\[f(n)=c\lim_{s<\ck}\varphi(n,s)=\varphi(n,s^n_\gamma+1)=f_\gamma(n)=c\diffd_{\xi<\eta}f_\xi(n).\]

If $\lambda(n)$ is a limit ordinal, there is no greatest ordinal $\gamma$ such that $n\in{\rm dom}(f_\gamma)$, so $c\diff^\ast_{\xi<\eta}f_\xi(n)=c$.
Moreover, for $s^n=\sup_{\xi<\lambda(n)}s^n_\xi$, we have $\varphi(n,s^n)=c\lim_{s<s^n}\varphi(n,s)=c$ since $\varphi(n,s^n_\xi)\not=\varphi(n,s^n_{\xi}+1)$ for any $\xi<\lambda(n)$.
Therefore, $c\diff^\ast_{\xi<\eta}f_\xi(n)=c=c\lim_{s<s^n}\varphi(n,s)=f(n)$.

($\Rightarrow$)
Assume that $f=c\diff^\ast_{\xi<\eta}f_\xi$ for a dom-decreasing sequence $(f_\xi)_{\xi<\eta}$ of partial $\Pi^1_1$ functions.
Then, we have a hyp-approximation $(f_\xi[s])_{s<\ck}$ for $f$ for each $\xi<\eta$.
Fix $n\in\om$.
Let $s_\xi$ be the least stage $s$ such that $f_\xi(n)[s]$ is defined if such an $s$ exists.
Clearly, we may assume that $\zeta\leq\xi$ implies $s_\zeta\leq s_\xi$ since $(f_\xi)_{\xi<\eta}$ is dom-decreasing.
Moreover, we claim that if we choose a hyp-approximation for $f_\xi$ appropriately, we may assume that $s_\xi$ is successor for each $\xi<\eta$, and $(s_\xi)_{\xi<\eta}$ is strictly increasing.
To see this, put $s(\xi,t)=(\eta+1)\cdot t+\xi+1$.
Then, $s\colon\eta\times\ck\to\ck$ is injective.
Fix $\xi<\eta$, and first declare that $f'_\xi(n)[0]$ is undefined.
If $s=s(\xi,t)$ for some ordinal $t<\ck$, then put $f'_\xi(n)[s]=f_\xi(n)[t]$.
Assume that $s$ is not of the form $s(\xi,t)$.
If $s$ is successor, say $s=s'+1$, then put $f'_\xi(n)[s]=f'_\xi(n)[s']$.
If $s$ is limit, then put $f'_\xi(n)[s]=\lim_{t<s}f'_\xi(n)[t]$.
It is easy to see that $(f'_\xi[s])_{s<\ck}$ is a hyp-approximation for $f_\xi$ for each $\xi<\eta$.
Moreover, since $s(\xi,t)$ is successor, and $s$ is injective, one can see that this approximation has the desired property.
Then, replace $(f_\xi[s])_{s<\ck}$ with $(f'_\xi[s])_{s<\ck}$.

For a successor ordinal $s<\ck$, let $\psi(n,s)$ be the least ordinal $\xi<\eta$ such that $n\not\in{\rm dom}(f_\xi[s])$.
If there is no such $\xi$, put $\psi(n,s)=\eta$.
Note that $\psi(n,s)=\min(\{\xi<\eta:s<s_\xi\}\cup\{\eta\})$, so $\xi<\psi(n,s)$ if and only if $s_\xi\leq s$.
If $\psi(n,s)$ is successor, say $\psi(n,s)=\gamma+1$, then define $\varphi(n,s)=f_\gamma(n)$.
If $\psi(n,s)$ is limit, then define $\varphi(n,s)=c$.
For a limit ordinal $s<\ck$, define $\varphi(n,s)=c\lim_{t<s}\varphi(n,s)$.
Obviously, $\varphi$ is $c$-semicontinuous.
One can also check that $\varphi$ is hyp-computable.

We inductively define an order embedding $h\colon{\tt mc}_\varphi(n)\to\eta$ which, given $t\in{\tt mc}_\varphi(n)$, returns an ordinal less than $\psi(n,t+1)$.
Put $s=t+1$.
If $\psi(n,s)$ is successor, define $h(t)=\psi(n,s)-1$.
If $\psi(n,s)$ is limit, note that $s^\ast:=\sup\{s_\xi:\xi<\psi(n,s)\}<s$ since $(s_\xi)_{\xi<\eta}$ is strictly increasing and $s$ is successor.
Note that if $u$ is a successor ordinal with $s^\ast<u\leq s$ then $\psi(n,u)=\psi(n,s)$ by the definitions of $s^\ast$ and $\psi$.
Moreover, $t\in{\tt mc}_\varphi(n)$ implies that $\varphi(n,t)\not=\varphi(n,s)$, so we must have $s^\ast=t$.
First suppose that, for any $\xi<\psi(n,s)$ there exists $\zeta$ such that $\xi<\zeta<\psi(n,s)$ and $f_\xi(n)\not=f_\zeta(n)$.
In this case, as $(s_\xi)_{\xi<\eta}$ is a strictly increasing sequence of successor ordinals, we have $\varphi(n,s_\xi-1)=f_\xi(n)\not=f_\zeta(n)=\varphi(n,s_\zeta-1)$. 
This implies that $\varphi(n,t)=\varphi(n,s^\ast)=c\lim_{t<s^\ast}\varphi(n,t)=c$.
Moreover, $\varphi(n,s)=c$ since $\psi(n,s)$ is limit by our assumption.
This contradicts $t\in{\tt mc}_\varphi(n)$.

Thus, there exists $\xi<\psi(n,s)$ such that $\xi\leq\zeta<\psi(n,s)$ implies $f_\xi(n)=f_\zeta(n)$, for any $\zeta$.
Then, one might think that we can just define $h(t)$ as $\xi+1$; however recall that if $\psi(n,u)$ is a limit ordinal, then the value of $\varphi(n,u)$ is reset to $c$.
Thus, the value of $\varphi(n,u)$ may change even if $(f_\zeta(n))_{\xi\leq\zeta<\psi(n,s)}$ is constant.
Of course, if the value of $f_\xi(n)$ is $c$, there is no problem.
If $f_\xi(n)=c$, for any $u$ with $s_\xi\leq u\leq s$, we have $\varphi(n,u)=\varphi(n,s_\xi)=f_\xi(n)$.
In this case, we put $h(t)=\xi+1$, which implies $h(t)<\psi(n,s)$.
Note that $u<s$ and $u\in{\tt mc}_\varphi(n)$ implies $u<s_\xi$, so $u+1<s_{\xi+1}$ as $(s_\xi)_{\xi<\eta}$ is strictly increasing.
This implies $\psi(n,u+1)\leq\xi+1$ by the definition of $\psi$.
By the induction hypothesis, $h(u)<\psi(n,u+1)\leq\xi+1$.
Hence, $u<t$ implies $h(u)<h(t)$.

If $f_\xi(n)\not=c$, then there are two cases:
If $\psi(n,s)$ is a limit of limit ordinals, say $\psi(n,s)=\sup_{k\in\om}\lambda_k$ where each $\lambda_k$ is limit, then we have $\varphi(n,s_{\lambda_k}-1)=c$ since $\psi(n,s_{\lambda_k}-1)=\lambda_k$, which is limit.
Then $s^\ast=\sup\{s_{\lambda_k}:k\in\om\}$, and $t=s^\ast$ as seen before, so we have $\varphi(n,t)=c\lim_{u<s^\ast}\varphi(n,u)=c$.
However, $\varphi(n,u)=c$ as $\psi(n,s)$ is limit by our assumption.
Again, $t\in{\tt mc}_\varphi(n)$ implies that $\varphi(n,t)\not=\varphi(n,s)$, which is impossible.
Next, if $\psi(n,s)$ is not a limit of limit ordinals (while $\psi(n,s)$ is limit by our assumption), then $\psi(n,s)$ is of the form $\lambda+\om$.
Then choose $k\in\om$ such that $\xi\leq\lambda+k$, and define $h(t)=\lambda+k+1$, which implies $h(t)<\psi(n,s)$.
Note that for any successor ordinal $u$ with $s_{\lambda+k}\leq u<s^\ast=\sup\{s_{\lambda+\ell}:\ell\in\om\}$ we have $\psi(n,u)=\lambda+\ell$ for some $k<\ell<\om$.
In particular, $\psi(n,u)$ is successor, so $\varphi(n,u)=f_{\lambda+\ell}(n)=f_\xi(n)$.
Hence, for any $u$ with $s_{\lambda+k}\leq u\leq s$, we have $\varphi(n,u)=f_\xi(n)$.
Therefore, by the same argument as in the case $f_\xi(n)=c$, one can see that $u<t$ implies $h(u)<h(t)$.
Hence, $h$ is an order embedding.


We claim that $f(n)=c\lim_{s<\ck}\varphi(n,s)$.
Let us consider $\gamma=\max\{\xi<\eta:n\in{\rm dom}(f_\xi)\}$ if it exists.
Then, $f(n)=f_\gamma(n)$ since $f=\diff^\ast_{\xi<\eta}f_\xi$.
Let $s$ be the least stage such that $n\in{\rm dom}(f_\xi[s])$.
By maximality of $\gamma$, for any successor ordinal $t\geq s$ we have $\psi(n,t)=\gamma+1$, and thus $\varphi(n,t)=f_\gamma(n)$ by definition.
Therefore, we have $c\lim_{s<\ck}\varphi(n,s)=f_\gamma(n)=f(n)$.
If there is no such a $\gamma$, then $f(n)=c$.
Put $\lambda=\min\{\xi<\eta:n\not\in{\rm dom}(f_\xi)\}$
Then, $\lambda$ must be a limit ordinal as $\gamma$ is undefined.
Let us consider $(s_\xi)_{\xi<\lambda}$.
Note that $\xi\mapsto s_\xi\colon\lambda\to\ck$ is a total $\Pi^1_1$ function, and thus $\Delta^1_1$ since the domain is a computable ordinal.
Hence, by Spector's boundedness theorem (see e.g.~\cite[Corollary I.5.6]{SacksBook}), we have $s^\ast:=\sup\{s_\xi:\xi<\lambda\}<\ck$.
For any successor ordinal $s\geq s^\ast$, we have $\psi(n,s)=\lambda$, and thus $\varphi(n,s)=c$ since $\lambda$ is limit.
Hence, we have $c\lim_{s<\ck}\varphi(n,s)=c=f(n)$.
\end{proof}

As a corollary, one can see that for any $n\in\om$, and $\eta<\ck$, we have
\[cD_n(\Pi^1_1)=cD_n^\ast(\Pi^1_1)\subseteq\dots\subseteq cD_\om(\Pi^1_1)\subseteq\dots\subseteq cD_\eta(\Pi^1_1)\subseteq\dots\subseteq cD^\ast_\om(\Pi^1_1).\]

By relativizing Propositions \ref{prop:diff-characterization1} and \ref{prop:diff-characterization2}, one can show the similar results for Baire space $\om^\om$.

\subsection{Difference hierarchy for sets}\label{sec:diff-for-sets}

Now let us return to the original unintuitive definition of difference operators for sets.
For a countable ordinal $\xi$, if $(A_\eta)_{\eta<\xi}$ is an increasing sequence of sets, then its difference $\diff_{\eta<\xi}A_\eta$ is defined as follows:
\[
\diff_{\eta<\xi}A_\eta=
\bigcup_{\substack{\eta<\xi\\{\sf par}(\eta)\not={\sf par}(\xi)}}\left(A_\eta\setminus\bigcup_{\gamma<\eta}A_\gamma\right),
\]
where ${\sf par}(\xi)=1$ if $\xi$ is odd; otherwise, ${\sf par}(\xi)=0$.
If $n$ is a natural number, one can see that $\diff_{m\leq n}A_m=A_n\setminus(A_{n-1}\setminus(\dots\setminus (A_1\setminus A_0)))$.
If $(B_\eta)_{\eta<\xi}$ is a decreasing sequence of sets, then its difference $\diff^\ast_{\eta<\xi}B_\eta$ is defined as follows:
\[
\diffd_{\eta<\xi}B_\eta=\bigcup_{\substack{\eta<\xi\\ \eta\text{ even}}}\left(B_\eta\setminus B_{\eta+1}\right),
\]
where if $\xi$ is odd, put $B_\xi=\emptyset$.
If $n$ is a natural number, one can see that $\diff^\ast_{m\leq n}B_m=B_0\setminus(B_1\setminus(\dots\setminus (B_{n-1}\setminus B_n)))$.

Let $D_\eta(\tpbf{\Pi}^1_1)$ be the class of all sets of the form $\diff_{\xi<\eta}A_\xi$ for an increasing sequence $(A_\xi)_{\xi<\eta}$ of $\tpbf{\Pi^1_1}$ sets.
Similarly, let $D_\eta({\Pi}^1_1)$ be the class of all sets of the form $\diff_{\xi<\eta}f_\xi$ for a uniform $\Pi^1_1$ increasing sequence $(A_\xi)_{\xi<\eta}$ of $\Pi^1_1$ sets.
We also define the classes $D^\ast_\eta(\tpbf{\Pi}^1_1)$ and $D^\ast_\eta({\Pi}^1_1)$ in a similar manner.
To understand the relationship between the difference operators for sets and function, it is useful to introduce the following hybrid version of difference operators.
Let $X$ and $Y$ be Polish spaces, and fix $c\in Y\cup\{\undef\}$.

\begin{definition}\label{def:diff-for-functions}
For an increasing sequence $(A_\xi)_{\xi<\eta}$ of sets and a sequence $(f_\xi)_{\xi<\eta}$ of partial functions, we define $c\diff_{\xi<\eta}[f_\xi/A_\xi]\pcolon X\to Y$ as follows:
\[c\diff_{\xi<\eta}[f_\xi/A_\xi](x)=
\begin{cases}
f_\gamma(x),&\mbox{ if $\gamma=\min\{\xi<\eta:x\in A_\xi\}$},\\
c,&\mbox{ if no such $\gamma$ exists.}
\end{cases}
\]

For a decreasing sequence $(B_\xi)_{\xi<\eta}$ of sets and sequence $(f_\xi)_{\xi<\eta}$ of partial functions, we define $c\diff_{\xi<\eta}^\ast[f_\xi/B_\xi]\pcolon X\to Y$ as follows:
\[c\diffd_{\xi<\eta}[f_\xi/B_\xi](x)=
\begin{cases}
f_\gamma(x),&\mbox{ if $\gamma=\max\{\xi<\eta:x\in B_\xi\}$},\\
c,&\mbox{ if no such $\gamma$ exists.}
\end{cases}
\]
\end{definition}

Let $cD_\eta(\tpbf{\Sigma}^0_1/\tpbf{\Pi}^1_1)$ be the class of all sets of the form $c\diff_{\xi<\eta}[f_\xi/A_\xi]$ for an increasing sequence $(A_\xi)_{\xi<\eta}$ of $\tpbf{\Pi^1_1}$ sets, and a sequence $(f_\xi)_{\xi<\eta}$ of continuous functions.
Similarly, let $cD_\eta(\Sigma^0_1/\Pi^1_1)$ be the class of all sets of the form $c\diff_{\xi<\eta}[f_\xi/A_\xi]$ for a uniform $\Pi^1_1$ increasing sequence $(A_\xi)_{\xi<\eta}$ of $\Pi^1_1$ sets, and a computable sequence $(f_\xi)_{\xi<\eta}$ of computable functions.
We also define the classes $cD^\ast_\eta(\tpbf{\Sigma}^0_1/\tpbf{\Pi}^1_1)$ and $cD^\ast_\eta(\Sigma^0_1/{\Pi}^1_1)$ in a similar manner.
Obviously, $cD_\eta(\tpbf{\Sigma}^0_1/\tpbf{\Pi}^1_1)\subseteq cD_\eta(\tpbf{\Pi}^1_1)$ and $cD^\ast_\eta(\tpbf{\Sigma}^0_1/\tpbf{\Pi}^1_1)\subseteq cD^\ast_\eta(\tpbf{\Pi}^1_1)$.
The lightface versions also hold.

These hybrid difference operators seem relevant for studying $\sigma$-continuous functions ($\om$-decomposable functions; see e.g.~\cite{GKN21}).
As in Propositions \ref{prop:diff-characterization1} and \ref{prop:diff-characterization2}, the classes $cD_\eta(\Sigma^0_1/\Pi^1_1)$ and $cD^\ast_\eta(\Sigma^0_1/{\Pi}^1_1)$ are characterized as hyp-computability of an index $\gamma$ with mind-changes.
Such an index-guessing has been extensively studied in the theory of inductive inference (identification in the limit; see \cite{JOSW}).

Observe that the characteristic function of a set in $D_\eta(\tpbf{\Pi}^1_1)$ belongs to $0D_\eta(\tpbf{\Sigma}^0_1/\tpbf{\Pi}^1_1)$:
Given an increasing sequence $(A_\xi)_{\xi<\eta}$ of sets, define $f_\xi\colon A_\xi\to 2$ by $f_\xi(x)=1$ if ${\sf par}(\xi)\not={\sf par}(\eta)$; otherwise $f_\xi(x)=0$.
Similarly, the characteristic function of a set in $D^\ast_\eta(\tpbf{\Pi}^1_1)$ belongs to $0D_\eta^\ast(\tpbf{\Sigma}^0_1/\tpbf{\Pi}^1_1)$:
Given a decreasing sequence $(A_\xi)_{\xi<\eta}$ of sets, define $f_\xi\colon A_\xi\to 2$ by $f_\xi(x)=1$ if ${\sf par}(\xi)=0$; otherwise $f_\xi(x)=0$.

As a corollary of Proposition \ref{prop:diff-characterization1}, the class $D_\eta(\Pi^1_1)$ is characterized as hyp-computability with finite mind-changes along $(\eta+1)$-countdown with the initial value $0$.

\begin{cor}
A set $A\subseteq\om$ belongs to $D_\eta(\Pi^1_1)$ if and only if there exists a hyp-computable continuous finite-change function $\varphi\colon\om\times\ck\to 2$ such that for any $n\in\om$,
\begin{itemize}
\item $\varphi$ has an $(\eta+1)$-valued hyp-computable countdown,
\item $\varphi(n,0)=0$, and $A(n)=\lim_{s<\ck}\varphi(n,s)$.
\end{itemize}
\end{cor}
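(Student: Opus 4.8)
The plan is to deduce the statement from Proposition~\ref{prop:diff-characterization1} with $c=0$, applied to the characteristic function $\chi_A\colon\om\to\{0,1\}$ regarded as a member of the function classes. Two ingredients are needed. First, the equivalence
\[A\in D_\eta(\Pi^1_1)\iff\chi_A\in 0D_\eta(\Pi^1_1),\]
where $\chi_A$ is viewed as a $\{0,1\}$-valued total function. Second, the remark that in Proposition~\ref{prop:diff-characterization1} the witnessing $\varphi$ can be taken $\{0,1\}$-valued whenever $f$ is, and that any function admitting a countdown is finite-change (as already observed in Section~\ref{approx-mind-changes}). Granting these, the corollary is precisely Proposition~\ref{prop:diff-characterization1} read off for $c=0$ and codomain $\{0,1\}$.

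For the forward implication I would use the constant-function encoding described just before the corollary: if $A=\diff_{\xi<\eta}A_\xi$ for an increasing $\Pi^1_1$ sequence $(A_\xi)_{\xi<\eta}$, set $f_\xi\colon A_\xi\to\{0,1\}$ constantly equal to $1$ when ${\sf par}(\xi)\neq{\sf par}(\eta)$ and to $0$ otherwise, so that $\chi_A=0\diff_{\xi<\eta}f_\xi\in 0D_\eta(\Sigma^0_1/\Pi^1_1)\subseteq 0D_\eta(\Pi^1_1)$. Feeding this into the ($\Rightarrow$) direction of Proposition~\ref{prop:diff-characterization1} produces a hyp-computable continuous $\varphi$ with an $(\eta+1)$-valued countdown $\psi$, with $\varphi(n,0)=0$ and $\chi_A(n)=\lim_{s<\ck}\varphi(n,s)$. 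Since there $\varphi(n,s)=f_{\psi(n,s)}(n)$ (or $0$) and every $f_\xi$ takes values in $\{0,1\}$, the function $\varphi$ is $\{0,1\}$-valued; being equipped with a countdown, it is finite-change, as required.

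For the converse I would run the ($\Leftarrow$) direction of Proposition~\ref{prop:diff-characterization1}: from a $\{0,1\}$-valued $\varphi$ with the stated properties it yields $\chi_A=0\diff_{\xi<\eta}f_\xi$ with partial $\Pi^1_1$ functions $f_\xi(n)=\varphi(n,s)$ (for the least $s$ with $\psi(n,s)\leq\xi$), which are again $\{0,1\}$-valued. It then remains to turn this $\{0,1\}$-valued member of $0D_\eta(\Pi^1_1)$ back into an increasing $\Pi^1_1$-tower of length $\eta$ witnessing $A\in D_\eta(\Pi^1_1)$; this is the one genuinely non-formal point, and \emph{the main obstacle}. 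The difficulty is that the set operator $\diff$ records membership through the \emph{parity} of the least level at which $x$ enters the tower, whereas the function operator records an explicit bit $f_{\gamma(x)}(x)$ with $\gamma(x)=\min\{\xi<\eta:x\in D_\xi\}$, where $D_\xi={\rm dom}(f_\xi)$; one must reconcile the two without increasing the length. Writing $P_\xi=f_\xi^{-1}(1)$ and $N_\xi=f_\xi^{-1}(0)$ (all $\Pi^1_1$, since $\Pi^1_1$ is closed under finite unions and intersections and each $f_\xi$ is a $\{0,1\}$-valued partial $\Pi^1_1$ function), I would reconstruct $A$ by a Hausdorff-type alternating Boolean computation. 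For instance, when $\eta=2$ one has $A=P_0\cup(P_1\setminus D_0)$, hence $\neg A=\neg(P_0\cup P_1)\cup N_0$ exhibits $A$ as a $2$-difference of $\Pi^1_1$ sets with $A_1=P_0\cup P_1$ and $A_0=(P_0\cup P_1)\cap N_0$. I would carry out the analogous transfinite Boolean manipulation in general, using closure of $\Pi^1_1$ under finite Boolean operations at each level to keep the reconstructed tower $\Pi^1_1$ and of length exactly $\eta$.
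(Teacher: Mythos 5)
Your overall route is exactly the paper's own: the corollary is stated there as an immediate consequence of Proposition \ref{prop:diff-characterization1} together with the parity encoding of characteristic functions, so your forward direction, and your reduction of the backward direction to a ``function-to-set'' conversion, follow the intended derivation. The genuine gap is precisely the step you flag as non-formal: converting a $\{0,1\}$-valued $\chi_A=0\diff_{\xi<\eta}f_\xi$ back into an increasing $\Pi^1_1$ tower of length exactly $\eta$. Your $\eta=2$ computation is correct, but the pattern it suggests --- anchor at the top with $A_{\eta-1}=\bigcup_{\xi<\eta}P_\xi$ and work downward by alternately intersecting with $\bigcup_{\xi\leq\zeta}N_\xi$ and $\bigcup_{\xi\leq\zeta}P_\xi$ --- has no starting point when $\eta$ is a limit ordinal, and the natural reformulation (take $A_\zeta$ to be the intersection of the corresponding factors over all levels $\geq\zeta$) is simply wrong: for $\eta=\om$, a point $n$ with $\gamma(n)=5$, $n\in P_5$, and $n\in P_\xi$ for all $\xi>5$ has answer $1$, yet it lies in no factor $\bigcup_{\xi\leq j}N_\xi$, hence in no $A_k$, and the default then outputs $0$. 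A smaller but real slip: you appeal to ``closure of $\Pi^1_1$ under finite Boolean operations,'' which is false as stated ($\Pi^1_1$ is not closed under complement); the manipulation must stay positive, i.e., use only (countable) unions and intersections of the $P_\xi$, $N_\xi$, $D_\xi$.

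The gap is fillable, and by a construction simpler than your example suggests. Write $D_\xi={\rm dom}(f_\xi)$, $P_\xi=f_\xi^{-1}(1)$, $N_\xi=f_\xi^{-1}(0)$, and let $\ell(\zeta)=1$ if ${\sf par}(\zeta)\not={\sf par}(\eta)$, and $\ell(\zeta)=0$ otherwise. For $\zeta<\eta$ set
\[
A_\zeta=\bigcup_{\xi<\zeta}D_\xi\;\cup\;X_\zeta,\qquad
X_\zeta=
\begin{cases}
P_\zeta&\mbox{ if }\ell(\zeta)=1,\\
N_\zeta&\mbox{ if }\ell(\zeta)=0.
\end{cases}
\]
These sets are uniformly $\Pi^1_1$ (positive countable combinations, using the notation for $\eta<\ck$) and increasing, since $A_\zeta\subseteq\bigcup_{\xi\leq\zeta}D_\xi\subseteq A_{\zeta'}$ for $\zeta<\zeta'$. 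Now fix $n$ with $\gamma(n)=g$ and $b=f_g(n)$: no $A_\zeta$ with $\zeta<g$ contains $n$; $n\in A_g$ iff $b=\ell(g)$; and $n\in A_{g+1}$ whenever $g+1<\eta$. Hence the least $\zeta$ with $n\in A_\zeta$ is $g$ if $b=\ell(g)$ and $g+1$ otherwise, and since $\ell(g+1)=1-\ell(g)$, in either case the parity of the least entry level encodes exactly $b$, so $n\in\diff_{\zeta<\eta}A_\zeta\iff b=1$. The only boundary case is $b\not=\ell(g)$ with $g+1=\eta$; but then ${\sf par}(g)\not={\sf par}(\eta)$ forces $\ell(g)=1$, so $b=0$, and $n$ entering no $A_\zeta$ correctly yields the default value $0$; likewise when $\gamma(n)$ is undefined. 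This works uniformly for all $\eta$ --- finite, successor, or limit --- and once this lemma is in place, the rest of your argument is the paper's proof.
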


Similarly, as a corollary of Proposition \ref{prop:diff-characterization2}, the class $D^\ast_\eta(\Pi^1_1)$ is characterized as hyp-computability with at most $\eta$ mind-changes with the initial value $0$.

\begin{cor}
A set $A\subseteq\om$ belongs to $D^\ast_\eta(\Pi^1_1)$ if and only if there exists a hyp-computable $c$-semicontinuous function $\varphi\colon\om\times\ck\to\om$ such that for any $n\in\om$,
\begin{itemize}
\item ${\rm otype}({\tt mc}_\varphi(n))\leq\eta$,
\item $\varphi(n,0)=0$, and $A(n)=0\lim_{s<\ck}\varphi(n,s)$.
\end{itemize}
\end{cor}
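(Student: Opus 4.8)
The plan is to read the statement off from Proposition~\ref{prop:diff-characterization2} by setting $c=0$ and identifying the set $A$ with its characteristic function $\chi_A\colon\om\to\{0,1\}\subseteq\om$. Indeed, the two bulleted conditions on $\varphi$ are exactly the hypotheses of Proposition~\ref{prop:diff-characterization2} for $c=0$ and $f=\chi_A$, so the existence of such a $\varphi$ is equivalent to $\chi_A\in 0D^\ast_\eta(\Pi^1_1)$. Thus the whole corollary reduces to the equivalence $A\in D^\ast_\eta(\Pi^1_1)\iff\chi_A\in 0D^\ast_\eta(\Pi^1_1)$, after which Proposition~\ref{prop:diff-characterization2} finishes the job.

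The forward implication is immediate from the observation made just before the corollary: if $A=\diffd_{\xi<\eta}B_\xi$ for a decreasing $\Pi^1_1$ sequence $(B_\xi)_{\xi<\eta}$, then defining $f_\xi\colon B_\xi\to 2$ by $f_\xi(x)=1$ iff ${\sf par}(\xi)=0$ exhibits $\chi_A\in 0D^\ast_\eta(\Sigma^0_1/\Pi^1_1)$, and the inclusion $0D^\ast_\eta(\Sigma^0_1/\Pi^1_1)\subseteq 0D^\ast_\eta(\Pi^1_1)$ yields $\chi_A\in 0D^\ast_\eta(\Pi^1_1)$; Proposition~\ref{prop:diff-characterization2} then produces the desired $\varphi$.

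The real content is the converse. Given $\varphi$ as in the statement, I would first note that we may assume $\varphi$ is $\{0,1\}$-valued: since $\varphi$ is total and hyp-computable, each section $\{(n,s):\varphi(n,s)=m\}$ is $\Pi^1_1$, hence $\{\varphi\neq 1\}=\bigcup_{m\neq 1}\{\varphi=m\}$ is $\Pi^1_1$ as well, so $\{\varphi=1\}$ is $\Delta^1_1$ and its indicator $\varphi'$ is again hyp-computable, $0$-semicontinuous, and satisfies ${\rm otype}({\tt mc}_{\varphi'}(n))\le{\rm otype}({\tt mc}_\varphi(n))\le\eta$ together with $A(n)=0\lim_{s<\ck}\varphi'(n,s)$. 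Applying Proposition~\ref{prop:diff-characterization2} to $\varphi'$ gives $\chi_A=0\diffd_{\xi<\eta}g_\xi$ for a dom-decreasing $\Pi^1_1$ sequence of $\{0,1\}$-valued partial functions $g_\xi$. Now set $V_\xi=\{n:g_\xi(n)\downarrow=1\}$ and $W_\xi=\{n:g_\xi(n)\downarrow=0\}$; both are $\Pi^1_1$ as sections of the $\Pi^1_1$ graph of $g_\xi$, and ${\rm dom}(g_\xi)=V_\xi\sqcup W_\xi$ is the decreasing domain. Writing $\gamma_n=\max\{\xi:n\in{\rm dom}(g_\xi)\}$, one checks $n\in A$ iff $n\in V_{\gamma_n}$, equivalently $A=\bigcup_{\xi<\eta}(V_\xi\setminus{\rm dom}(g_{\xi+1}))$. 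The key device is to realize $A$ as a decreasing difference using only \emph{positive} Boolean combinations of the $V_\xi$ and $W_\xi$, e.g.\ $C_{2k}=\bigcup_{\xi\ge 2k}V_\xi$ and $C_{2k+1}=\bigcup_{\xi\ge 2k+2}V_\xi\cup\bigcup_{\xi\ge 2k}(V_\xi\cap W_{\xi+1})$ (and analogously at limit levels); this keeps every $C_\zeta$ inside $\Pi^1_1$ and makes $(C_\zeta)_{\zeta<\eta}$ a decreasing sequence, so that $\diffd_{\zeta<\eta}C_\zeta$ witnesses $A\in D^\ast_\eta(\Pi^1_1)$.

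The main obstacle is precisely this last construction: verifying that $\diffd_{\zeta<\eta}C_\zeta=A$ throughout the transfinite, where the ordinal parity ``${\sf par}$'' governs which $C_\zeta\setminus C_{\zeta+1}$ contribute and limit levels must be handled so that ``no maximal index'' matches ``top value $0$''. The point of insisting on positive combinations is that a naive decomposition would express the subtracted sets as differences such as ${\rm dom}(g_\xi)\setminus V_\xi$, which are only $\Pi^1_1\cap\Sigma^1_1$ and hence need not be $\Pi^1_1$; rewriting every such set as a union and intersection of the value-level sets $V_\xi,W_\xi$ is exactly what makes the construction go through, and checking by a successor/limit case analysis that the resulting decreasing difference equals $A$ is the only genuinely delicate step.
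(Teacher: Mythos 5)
Your proposal is correct, and its skeleton is exactly how the paper intends the corollary to be read: identify $A$ with $\chi_A$, get the forward direction from the observation that characteristic functions of $D^\ast_\eta(\Pi^1_1)$ sets lie in $0D^\ast_\eta(\Sigma^0_1/\Pi^1_1)\subseteq 0D^\ast_\eta(\Pi^1_1)$, and then invoke Proposition \ref{prop:diff-characterization2} with $c=0$. The paper says nothing at all about the converse passage from $\chi_A\in 0D^\ast_\eta(\Pi^1_1)$ back to a decreasing difference of $\Pi^1_1$ \emph{sets} of the same length $\eta$, and that is where your work goes beyond the text. Your construction does succeed: since $V_\xi$ and $W_\xi$ are disjoint $\Pi^1_1$ sets, everything stays positive, and the natural transfinite reading of your formulas ($C_\zeta=\bigcup_{\xi\ge\zeta}V_\xi$ for even $\zeta$, and $C_\zeta=\bigcup_{\xi\ge\zeta+1}V_\xi\cup\bigcup_{\xi\ge\zeta-1}(V_\xi\cap W_{\xi+1})$ for odd $\zeta$) gives a decreasing $\Pi^1_1$ sequence with $\diffd_{\zeta<\eta}C_\zeta=A$; the case analysis (last index carrying value $1$; last downward transition; $1$'s cofinal in a limit of domain indices; no $1$'s at all) closes, modulo the small point, which you should record, that an existential quantifier over notations for ordinals in an interval below $\eta$ preserves $\Pi^1_1$, so that each $C_\zeta$ is (uniformly) $\Pi^1_1$. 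However, the ``genuinely delicate step'' you flag can be avoided entirely, which is presumably why the paper states the result as an immediate corollary: after your reduction to a $\{0,1\}$-valued $\varphi'$ with $\varphi'(n,0)=0$, the functions $g_\xi(n)=\varphi'(n,s^n_\xi+1)$ produced in the paper's own proof of the ($\Leftarrow$) direction of Proposition \ref{prop:diff-characterization2} automatically take the constant value $1$ or $0$ on their domains according to whether ${\sf par}(\xi)=0$ or not, because a two-valued process that starts at $0$, flips at each change, and resets to $0$ at limits of changes has its current value determined by the parity of the ordinal number of changes so far. Consequently $A=\diffd_{\xi<\eta}{\rm dom}(g_\xi)$ holds outright, the domains being a uniformly $\Pi^1_1$ decreasing sequence, and no re-encoding by the sets $C_\zeta$ is needed; your construction buys generality (it converts an \emph{arbitrary} two-valued dom-decreasing function representation, not just the parity-aligned one coming from that proof), at the cost of the extra combinatorics.
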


It is easy to show the following analogues of Post's theorem.

\begin{prop}
A set $A\subseteq\om$ belongs to $\Delta(D_\eta(\Pi^1_1))$ if and only if there exists a hyp-computable continuous finite-change function $\varphi\colon\om\times\ck\to 2$ such that for any $n\in\om$,
\begin{itemize}
\item $\varphi$ has an $\eta$-valued hyp-computable countdown,
\item and $A(n)=\lim_{s<\ck}\varphi(n,s)$.
\end{itemize}
\end{prop}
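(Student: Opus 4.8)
The plan is to deduce the proposition from the preceding Corollary characterizing $D_\eta(\Pi^1_1)$ together with its dual, exploiting that $\Delta(D_\eta(\Pi^1_1))=D_\eta(\Pi^1_1)\cap\neg D_\eta(\Pi^1_1)$. First I would record the dual characterization: if $\varphi\colon\om\times\ck\to 2$ is a finite-change function, then $1-\varphi$ has the same mind-change stages, hence the same countdown, but with the initial value $0$ and $1$ interchanged. Applying the Corollary to $1-\varphi$ therefore shows that $A\in\neg D_\eta(\Pi^1_1)$ if and only if there is a hyp-computable continuous finite-change $\varphi$ with an $(\eta+1)$-valued countdown, $\varphi(n,0)=1$, and $A(n)=\lim_{s<\ck}\varphi(n,s)$.

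For the direction $(\Leftarrow)$, suppose $\varphi$ has an $\eta$-valued countdown $\psi$ and $A(n)=\lim_{s<\ck}\varphi(n,s)$. Since an $\eta$-valued countdown is in particular $(\eta+1)$-valued, the only obstruction to the Corollary is the prescribed initial value. I would force the initial value to $0$ by resetting $\varphi$ at the first stage and raising the countdown to the top value $\eta$ there; this absorbs the at most one extra mind-change that forcing may create, and yields an $(\eta+1)$-valued witness with initial value $0$, so $A\in D_\eta(\Pi^1_1)$. The symmetric modification, forcing the initial value to $1$, gives $A\in\neg D_\eta(\Pi^1_1)$ via the dual characterization, whence $A\in\Delta(D_\eta(\Pi^1_1))$.

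The substantial direction is $(\Rightarrow)$, and here the ``Post's theorem'' phenomenon enters. Given $A\in\Delta(D_\eta(\Pi^1_1))$, fix an $(\eta+1)$-valued witness $(\varphi^0,\psi^0)$ for $A\in D_\eta(\Pi^1_1)$ (initial value $0$) and an $(\eta+1)$-valued witness $(\varphi^1,\psi^1)$ for $A\in\neg D_\eta(\Pi^1_1)$ (initial value $1$). The crux is a \emph{covering property}: if for some $n$ both countdowns remain equal to $\eta$ at every stage, then, since a countdown strictly decreases at every mind-change, both $\varphi^0$ and $\varphi^1$ are constant, equal to their initial values $0$ and $1$; as both converge to $A(n)$, this forces $A(n)=0$ and $A(n)=1$, a contradiction. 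Hence the joint activation stage $t(n)=\min\{s<\ck:\psi^0(n,s)<\eta\text{ or }\psi^1(n,s)<\eta\}$ is \emph{total} and hyp-computable, with $t(n)<\ck$ for every $n$. I then merge the witnesses by following, at each stage, whichever of $\varphi^0,\varphi^1$ currently has the smaller countdown value and setting $\psi=\min(\psi^0,\psi^1)$; a short check shows $\psi$ is antitone, strictly decreases at every mind-change of the merged function, and the merged limit is $A(n)$. By antitonicity and the choice of $t(n)$ we get $\psi(n,s)<\eta$ for all $s\ge t(n)$, and to handle the initial segment $s\le t(n)$ (where no mind-change occurs but $\psi$ may still equal $\eta$) I freeze the merged function and its countdown at their stage-$t(n)$ values. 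Since $\psi(n,t(n))<\eta$, the result is a genuinely $\eta$-valued, hyp-computable countdown with limit $A(n)$, as required.

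The step I expect to be the main obstacle is exactly the totality of the activation stage $t(n)$. With only the inclusion $A\in D_\eta(\Pi^1_1)$ one cannot hyp-computably decide whether a given $n$ is ever activated by $\varphi^0$, because this is a $\Pi^1_1$ rather than a $\Delta^1_1$ event; consequently the top level of the countdown cannot be shaved, reflecting the genuine inclusion $\Delta(D_\eta(\Pi^1_1))\subsetneq D_\eta(\Pi^1_1)$. It is precisely the second inclusion $A\in\neg D_\eta(\Pi^1_1)$ that makes $t$ total and hence hyp-computable, thereby enabling the reduction from an $(\eta+1)$-valued to an $\eta$-valued countdown. This two-sided activation bit playing the role of the extra jump is the analogue of Post's theorem underlying the statement.
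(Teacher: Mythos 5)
There is no proof in the paper to compare against: the proposition is stated, together with its decreasing-hierarchy companion, under the remark that these analogues of Post's theorem are ``easy to show.'' Your proposal is correct in its essentials and supplies the intended argument. The backward direction (prepending a stage with countdown value $\eta$ to force the initial value) is right, and you correctly identify the crux of the forward direction: the covering property that for each $n$ at least one of the two witnesses must activate at some stage $<\ck$ (otherwise --- using also the \emph{continuity} of $\varphi^0,\varphi^1$ at limits, since a countdown only controls successor steps --- both would be constant and would converge to $0$ and to $1$ simultaneously), which makes the activation stage $t(n)$ total, hence a $\Delta^1_1$ function, and this is exactly where the hypothesis $A\in\neg D_\eta(\Pi^1_1)$ enters. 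Two corner cases in your merging step would fail as literally written, though both are locally fixable. First, ties: ``follow whichever countdown is currently smaller'' must be a \emph{sticky} rule that switches only on a strict undercut; with a plain argmin, a tie can trigger a switch, hence a possible mind-change, while $\min(\psi^0,\psi^1)$ does not decrease. Second, limit stages: $\psi^0$ and $\psi^1$ need not be continuous at limits, so re-evaluating the minimum at a limit $t$ can switch the followed witness and make the merged $\varphi$ discontinuous at $t$, violating a requirement of the proposition; you should instead set $\varphi(n,t)=\lim_{s<t}\varphi(n,s)$ at limits and permit switches only at successor stages. Relatedly, the merged countdown should be the countdown of the \emph{currently followed} witness rather than the literal pointwise minimum: after a limit stage at which the two countdowns cross, the followed witness can change its mind while the pointwise minimum stays constant. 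With these amendments, your freezing of the initial segment at the successor stage $t(n)+1$, where the followed countdown is guaranteed to be $<\eta$, completes a correct proof.
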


\begin{prop}
A set $A\subseteq\om$ belongs to $\Delta(D^\ast_\eta(\Pi^1_1))$ if and only if there exists a hyp-computable $c$-semicontinuous function $\varphi\colon\om\times\ck\to\om$ such that for any $n\in\om$,
\begin{itemize}
\item ${\rm otype}({\tt mc}_\varphi(n))<\eta$,
\item and $A(n)=c\lim_{s<\ck}\varphi(n,s)$.
\end{itemize}
\end{prop}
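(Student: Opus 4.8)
The plan is to pass to the mind-change normal forms provided by the set version of Proposition~\ref{prop:diff-characterization2} (its Corollary) and then carry out a Post-theorem style argument based on $\Delta(D^\ast_\eta(\Pi^1_1))=D^\ast_\eta(\Pi^1_1)\cap\neg D^\ast_\eta(\Pi^1_1)$. Recall that $A\in D^\ast_\eta(\Pi^1_1)$ iff $A$ has a hyp-computable $0$-semicontinuous representation $\varphi$ with $\varphi(n,0)=0$, $A(n)=0\lim_{s<\ck}\varphi(n,s)$ and ${\rm otype}({\tt mc}_\varphi(n))\le\eta$; replacing $\varphi$ by $1-\varphi$ shows dually that $A\in\neg D^\ast_\eta(\Pi^1_1)$ iff the same holds with reset and initial value $1$. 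The single fact I would isolate first is that, whenever ${\rm otype}({\tt mc}_\varphi(n))<\ck$, the set ${\tt mc}_\varphi(n)$ is hyp and of order type below $\ck$, so by Spector's boundedness (exactly as in the proof of Proposition~\ref{prop:diff-characterization2}) it is bounded in $\ck$; hence $\varphi(n,\cdot)$ is eventually constant, and its eventual value equals the reset value precisely when ${\rm otype}({\tt mc}_\varphi(n))$ is a limit ordinal. In particular the given $c\lim$ is an honest limit for every $n$, and the reset value $c$ only governs the behaviour on those $n$ for which the mind-change order type is a limit.

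The crux of both directions is the interplay between the reset value $c$ and the free initial value, together with the gap between ${\rm otype}({\tt mc}_\varphi(n))\le\eta$ and $<\eta$. The clean sub-phenomenon driving the strict inequality is this: a reset-$0$ representation can settle on the non-reset value $1$ only through a \emph{final}, hence outermost, mind change, so on $\{n:A(n)=1\}$ the order type ${\rm otype}({\tt mc}_{\varphi}(n))$ is a successor; dually a reset-$1$ representation settles on $0$ only through an outermost change. When $\eta$ is a limit, a successor order type that is $\le\eta$ is automatically $<\eta$, which already explains the shape of the statement; when $\eta$ is a successor, the outermost change must instead be absorbed into the free initial value.

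For $(\Rightarrow)$ I would take a reset-$0$ representation $\varphi_0$ of $A$ and a reset-$1$ representation $\varphi_1$ of $A$ (the latter obtained from $\neg A\in D^\ast_\eta(\Pi^1_1)$ by complementation), both of mind-change order type $\le\eta$ and both eventually equal to $A(n)$. Using the reduction property of $\Pi^1_1$, I would splice the associated decreasing $\Pi^1_1$ sequences into a single decreasing sequence carrying a $\Pi^1_1$-definable starting value $p(n)$, arranged so that the outermost layer (the decision recorded above) is pre-loaded into $p(n)$ rather than spent as a mind change. The resulting hyp-computable process computes $A$ from initial value $p(n)$ with mind-change order type strictly below $\eta$. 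For $(\Leftarrow)$ I would run the same construction in reverse: from a free-initial-value representation of mind-change order type $<\eta$ I would reinstate the outermost layer, producing reset-$0$ representations of both $A$ and $\neg A$ with order type $\le\eta$ (here $\alpha<\eta\Rightarrow\alpha+1\le\eta$ provides exactly the one unit of slack), whence $A\in\Delta(D^\ast_\eta(\Pi^1_1))$.

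The main obstacle is making the splicing in $(\Rightarrow)$ simultaneously hyp-computable and order-type-correct. The definability of the starting value $p(n)$ and of the merged sequence is where the reduction property of $\Pi^1_1$ is genuinely needed; and the delicate part of the accounting is the behaviour at limit stages, where the reset to the initial value can create spurious mind changes, so one must verify that absorbing the outermost layer lowers ${\rm otype}({\tt mc}_\varphi(n))$ strictly below $\eta$ uniformly in $n$ and uniformly for successor and limit $\eta$. This uniform ``drop by one level'' is the exact analogue of the descent from $\Sigma_\eta\cap\Pi_\eta$ to $\Delta_\eta$ in Post's theorem, and is the only point requiring care.
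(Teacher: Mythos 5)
Your $(\Leftarrow)$ direction is essentially sound, and since the paper dismisses this proposition as an easy analogue of Post's theorem and gives no proof, your argument has to stand on its own. The facts you isolate for that half are correct and genuinely needed: by totality and single-valuedness of $\varphi$, the set ${\tt mc}_\varphi(n)$ is $\Pi^1_1$ together with its complement inside $O_1$, so it has a hyp-computable increasing enumeration and Spector boundedness makes $\varphi(n,\cdot)$ eventually constant (the same device used inside the proof of Proposition \ref{prop:diff-characterization2}); prepending one stage, and complementing with care at limit stages (the flipped function must still reset to the reset value), then yields reset representations of $A$ and of $\neg A$ of order type $\le\eta$, using exactly the slack $\alpha<\eta\Rightarrow 1+\alpha\le\eta$ and $\alpha+1\le\eta$.

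The genuine gap is in $(\Rightarrow)$, at precisely the step you flag as the only point requiring care and claim goes through ``uniformly for successor and limit $\eta$'': for successor $\eta>\om$ no argument can close it, because the statement is then false. Take $\eta=\om+1$. Fix a universal $\Pi^1_1$ set $G\subseteq\om^3$, put $\tilde{R}^e_m=\bigcap_{j\le m}\{n:(e,j,n)\in G\}$ and $U_e=\bigcup_{m\,{\rm even}}(\tilde{R}^e_m\setminus\tilde{R}^e_{m+1})$, so that $(U_e)_{e\in\om}$ lists all of $D^\ast_\om(\Pi^1_1)$; let $S_m=\{e:e\in\tilde{R}^e_m\}$ and $D=\{e:e\notin U_e\}$. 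Then $\neg D=\bigcup_{m\,{\rm even}}(S_m\setminus S_{m+1})\in D^\ast_\om(\Pi^1_1)$, $D\notin D^\ast_\om(\Pi^1_1)$ by diagonalization, and $D\in D^\ast_{\om+1}(\Pi^1_1)$ via $T_0=\om$, $T_{m+1}=S_m$, $T_\om=\bigcap_mS_m$: the even-indexed differences are $\neg S_0$, the classes $S_{2k+1}\setminus S_{2k+2}$, and $\bigcap_mS_m$, whose union is $D$. As $\Delta(D^\ast_{\om+1})$ is closed under $\oplus$ (merge representations on evens and odds), $D\oplus\neg D\in\Delta(D^\ast_{\om+1})$, yet it lies in neither $D^\ast_\om$ nor $\neg D^\ast_\om$ (pull back along $n\mapsto 2n$, resp.\ $n\mapsto 2n+1$). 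However, every set computed by a free-start hyp process with ${\rm otype}({\tt mc}_\varphi(n))\le\om$ does lie in $D^\ast_\om\cup\neg D^\ast_\om$: one may assume $\varphi$ is $\{0,1\}$-valued, the initial guess $I=\{n:\varphi(n,0)=1\}$ is $\Delta^1_1$ (a total function with $\Pi^1_1$ graph), and merging it into the layers $R_k=\{n:\#{\tt mc}_\varphi(n)\ge k\}$ via $Z_k=(I\cap R_k)\cup(\neg I\cap R_{k+1})$ exhibits the computed set as the decreasing difference of $(Z_k)$ when $c=0$ (dually when $c=1$). So the free initial value is worthless at infinite levels---this is $1+\eta=\eta$ working against you---and your plan of ``absorbing the outermost layer into $p(n)$'' cannot be executed: the decision that would need pre-loading is which side of the top layer a point of the common core falls on, an inherently partial $\Pi^1_1$ decision on a $\Pi^1_1$ set that is in general not extendable to a total $\Delta^1_1$ function.

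In the cases where the proposition is true---$\eta$ finite, and $\eta$ a limit, in particular $\eta=\om$, which is the only case the paper uses later---your sketch is also missing the idea that actually makes $(\Rightarrow)$ work. Reduction-property preloading is the finite-level trick; for limit $\eta$ one must run the two representations against each other and always copy the one whose mind-change counter is currently smaller. Writing $\alpha(s),\beta(s)$ for the two counters, the combined process changes value at most a fixed finite number of times per value of $\min(\alpha(s),\beta(s))$, so its mind-change order type is at most $k\cdot\gamma+k$ with $\gamma=\min(\alpha^\ast,\beta^\ast)$; disjointness of the two cores gives $\gamma<\eta$, and for limit $\eta$ one has $k\cdot\gamma+k<\eta$. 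That ordinal computation, rather than a uniform ``drop by one level'', is the real content of the Post-style direction, and it is exactly what breaks at successor $\eta>\om$.
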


In particular, $\Delta(D^\ast_\om(\Pi^1_1))$ corresponds to hyp-computability with finite mind-changes.

\section{Solution to Fournier's problem}\label{sec:3}

\subsection{Weihrauch lattice}\label{sec:Weihrauch}

Let us explain that the class $cD_\eta(\tpbf{\Sigma}^0_1/\tpbf{\Pi}^1_1)$ is to some extent a natural one in terms of the Weihrauch lattice.
This perspective will also be used to solve Fournier's Question \ref{four-main-question}.
The study of the Weihrauch lattice aims to measure the computability theoretic difficulty of finding a choice function witnessing the truth of a given $\forall\exists$-theorem (cf.\ \cite{pauly-handbook}) as an analogue of reverse mathematics \cite{SOSOA:Simpson}.
The notion of Weihrauch degree is used as a tool to classify certain $\forall\exists$-statements by identifying each $\forall\exists$-statement with a partial multivalued function.
Informally speaking, a (possibly false) statement $S\equiv\forall x\in X\;[Q(x)\rightarrow\exists yP(x,y)]$ is transformed into a partial multivalued function $f\pcolon X\rightrightarrows Y$ such that ${\rm dom}(f)=\{x:Q(x)\}$ and $f(x)=\{y:P(x,y)\}$.
Then, measuring the degree of difficulty of witnessing the truth of $S$ is identified with that of finding a choice function for $f$.
Here, we consider choice problems for partial multivalued functions rather than relations in order to distinguish the hardest instance $f(x)=\emptyset$ and the easiest instance $x\in X\setminus{\rm dom}(f)$.

If one only considers subspaces of $\N^\N$, one can use the following version of Weihrauch reducibility:
For partial multivalued functions $f$ and $g$, we say that $f$ is {\em Weihrauch reducible to $g$} (written $f\leq_{\sf W}g$) if there are partial computable functions $h$ and $k$ such that the following holds:
Given an instance $x$ of $f$-problem (i.e., $x\in{\rm dom}(f)$), if we know a solution $y$ to the instance $h(x)$ of $g$-problem (i.e., $y\in g(h(x))$), then the algorithm $k$ tells us that $k(x,y)$ is a solution to the instance $x$ of $f$-problem (i.e., $k(x,y)\in f(x)$).
In other words,
\[(\forall x\in{\rm dom}(f))(\forall y)\;[y\in g(h(x))\implies k(x,y)\in f(x)].\]

The functions $h$ and $k$ are often called an {\em inner reduction} and an {\em outer reduction}, respectively.
To discuss Weihrauch reducibility in other spaces, we introduce some auxiliary concepts.
A {\em representation} of a set $X$ is a partial surjection $\delta_X\pcolon\om^\om\to X$.
If $\delta_X(p)=x$, then $p$ is called a {\em $\delta_X$-name of $x$}\index{name} (or simply, a {\em name of $x$} if $\delta_X$ is clear from the context).
A pair of a set and its representation is called a {\em represented space}.

\begin{example}
Perhaps, one of the best known examples of represented spaces in descriptive set theory is the space ${\sf Bor}$ of Borel sets in a Polish space, where consider the representation $\delta_{\sf Bor}\pcolon\om^\om\to{\sf Bor}$ defined by $\delta_{\sf Bor}(p)=A$ if and only if $p$ is a Borel code of $A$.
In other words, a $\delta_{\sf Bor}$-name of $A$ is exactly a Borel code of $A$.
\end{example}

\begin{definition}[see also \cite{pauly-handbook}]
Let $X$, $Y$, $Z$ and $W$ be represented spaces with representations $\delta_X$, $\delta_Y$, $\delta_Z$ and $\delta_W$, respectively.
For partial multivalued functions $f\pcolon X\tto Y$ and $g\pcolon Z\tto Y$, we say that $f$ is {\em Weihrauch reducible to $g$} (written $f\leq_{\sf W}g$) if there are partial computable functions $h$ and $k$ such that the following holds:
Given a $\delta_X$-name ${\tt x}$ of an instance $x$ of $f$-problem, the algorithm $h$ tells us a $\delta_Z$-name $h({\tt x})$ of an instance $x^\ast$ of $g$-problem, and if we know a $\delta_W$-name ${\tt y}$ of a solution $y$ to the instance $x^\ast$ of $g$-problem, then the algorithm $k$ tells us that $k({\tt x},{\tt y})$ is a $\delta_Y$-name of a solution to the instance $x$ of $f$-problem.
In other words,
\[(\forall {\tt x}\in{\rm dom}(f\circ\delta_X))(\forall {\tt y})\;[\delta_W({\tt y})\in g\circ\delta_Z(h({\tt x}))\implies \delta_Y\circ k({\tt x},{\tt y})\in f\circ\delta_X({\tt x})].\]
\end{definition}


We now consider the following $\forall\exists$-principles related to the difference hierarchy:
\begin{itemize}
\item {\em $\Gamma$-least number principle}: For any nonempty $\tpbf{\Gamma}$ set $A\subseteq\om$, there exists the least element of $A$.
\item {\em $\Gamma$-counting}: For any finite $\tpbf{\Gamma}$ set $A\subseteq\om$, the value $\# A$ exists.
\end{itemize}

We consider the case where $\Gamma$ is either ${\Pi}^1_1$ or ${\Sigma}^1_1$.
For such a $\Gamma$, note that if $X$ is a Polish space then the collection $\tpbf{\Gamma}(X)$ of all $\tpbf{\Gamma}$ subsets of $X$ has a total representation $\delta_\Gamma\colon\om^\om\to\tpbf{\Gamma}(X)$.
For instance, if $X=\N$ and $\Gamma={\Pi}^1_1$ then, for any $e\in\om$ and $p\in\om^\om$, the concatenation $e\fr p$ is a $\delta_{\Pi^1_1}$-name of $A\subseteq\om$ if and only if $A$ is the $e$-th $\Pi^1_1(p)$ set.
Hereafter, we also use $P_x$ to denote $\delta_{\Pi^1_1}(x)$; that is, $P_x$ is the $\tpbf{\Pi}^1_1$ set coded by $x$.

\begin{definition}
We define the $\Gamma$-least number principle $\Gamma\lnp\pcolon\tpbf{\Gamma}(\om)\to\om$ as follows:
\[
\Gamma\lnp(A)=
\begin{cases}
\min A,&\mbox{ if }A\not=\emptyset,\\
\mbox{undefined},&\mbox{ if }A=\emptyset.
\end{cases}
\]

We define the $\Gamma$-counting principle $\#\Gamma\pcolon\tpbf{\Gamma}(\om)\to\om$ as follows:
\[
\#\Gamma(A)=
\begin{cases}
\# A,&\mbox{ if }\#A\mbox{ is finite},\\
\mbox{undefined},&\mbox{ otherwise}.
\end{cases}
\]
\end{definition}

Let $(X,\delta_X)$ be a represented space.
We say that a partial function $f\pcolon X\to\om$ is $cD_\om(\Gamma)$-complete if $f\circ\delta_X\pcolon \om^\om\to\om$ belongs to $cD_\om(\Gamma)$, and any $cD_\om(\Gamma)$-function $g\pcolon\om^\om\to\om$ is Weihrauch reducible to $f$.
We define $cD^\ast_\om(\Gamma)$-completeness in a similar manner.
We now consider the case $c={}\uparrow$ (indicating ``undefined'').

\begin{prop}
$\Pi^1_1\lnp$ is $\undef D_\om(\Sigma^0_1/\Pi^1_1)$-complete.
\end{prop}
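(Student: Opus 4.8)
The plan is to verify the two clauses in the definition of $\undef D_\om(\Sigma^0_1/\Pi^1_1)$-completeness separately: first that $\Pi^1_1\lnp\circ\delta_{\Pi^1_1}\pcolon\om^\om\to\om$ lies in $\undef D_\om(\Sigma^0_1/\Pi^1_1)$ (membership), and then that every $g\in\undef D_\om(\Sigma^0_1/\Pi^1_1)$ is Weihrauch reducible to $\Pi^1_1\lnp$ (hardness). The guiding intuition is that the least-index selection built into the hybrid operator $\undef\diff_{n<\om}[f_n/A_n]$ is precisely a ``least number'' search along an increasing $\Pi^1_1$ sequence, so both directions should amount to translating one such search into the other.

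For membership, I would set $A_n=\{x\in\om^\om:(\exists m\le n)\,m\in P_x\}$. Since $\{(m,x):m\in P_x\}$ is the universal $\Pi^1_1$ relation and $A_n$ is the finite union $\bigcup_{m\le n}\{x:m\in P_x\}$, the sequence $(A_n)_{n<\om}$ is a uniform $\Pi^1_1$ increasing sequence, and when $P_x\ne\emptyset$ the least $n$ with $x\in A_n$ is exactly $\min P_x$. Taking $f_n$ to be the computable constant function $x\mapsto n$, the value $\undef\diff_{n<\om}[f_n/A_n](x)$ equals $\min P_x$ if $P_x\ne\emptyset$ and is undefined if $P_x=\emptyset$; this coincides with $\Pi^1_1\lnp\circ\delta_{\Pi^1_1}$, witnessing membership in $\undef D_\om(\Sigma^0_1/\Pi^1_1)$.

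For hardness, I would fix $g=\undef\diff_{n<\om}[g_n/B_n]$, where $(B_n)_{n<\om}$ is a uniform $\Pi^1_1$ increasing sequence and $(g_n)_{n<\om}$ is a computable sequence of computable functions. Given an input $p\in{\rm dom}(g)$, the inner reduction $h$ outputs a $\delta_{\Pi^1_1}$-name of $A_p=\{n<\om:p\in B_n\}$; since $\{(n,q):q\in B_n\}$ is $\Pi^1_1$, the $s$-$m$-$n$ theorem produces an index of $A_p$ as a $\Pi^1_1(p)$ set uniformly in $p$. Because $(B_n)$ is increasing, $A_p$ is a final segment $[\gamma,\om)$ with $\gamma=\min\{n:p\in B_n\}$, so $A_p\ne\emptyset$ (as $p\in{\rm dom}(g)$) and $\Pi^1_1\lnp(A_p)=\min A_p=\gamma$. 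The outer reduction then sets $k(p,\gamma)=g_\gamma(p)$, which is computable in $(p,\gamma)$ and equals $g(p)$ by definition of the hybrid operator; hence $g\le_{\sf W}\Pi^1_1\lnp$.

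I do not expect a serious obstacle here: the only point requiring care is the uniformity of the $\Pi^1_1$ coding, namely that $A_n$ (resp.\ $A_p$) is named by an index produced computably, which follows from the universality of $\delta_{\Pi^1_1}$ together with the $s$-$m$-$n$ theorem. I would state this bookkeeping once and then present both directions as essentially routine verifications.
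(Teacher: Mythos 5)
Your proposal is correct and takes essentially the same route as the paper's own proof: the membership witness is the identical sequence $A_n=\{x:(\exists m\leq n)\;m\in P_x\}$ paired with the constant functions $x\mapsto n$, and the hardness direction uses the same inner reduction $x\mapsto$ a $\delta_{\Pi^1_1}$-name of $\{n\in\om:x\in B_n\}$ together with the same outer reduction $(x,n)\mapsto g_n(x)$. The only difference is cosmetic: you keep the hardness direction lightface (uniform $\Pi^1_1$ sequences, with the $s$-$m$-$n$ theorem making the naming computable), which is exactly what the definition of $\undef D_\om(\Sigma^0_1/\Pi^1_1)$-completeness asks for, whereas the paper phrases the same reduction for sequences of boldface $\tpbf{\Pi}^1_1$ sets and continuous functions.
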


\begin{proof}
To see that $\Pi^1_1\lnp$ is in $\undef D_\om(\Sigma^0_1/\Pi^1_1)$, define $A_n=\{x:(\exists k\leq n)\;k\in P_x\}$ where $P_x$ is the $x$th $\tpbf{\Pi}^1_1$ set, and consider the constant function $c_n\colon x\mapsto n$.
Then, $(A_n)_{n<\om}$ is an increasing sequence of $\tpbf{\Pi}^1_1$ sets.
One can easily see that $\min P_x=\min\{n\in\om:x\in A_n\}$ whenever $P_x$ is empty.
Recall from Definition \ref{def:diff-for-functions} that $\undef\diff_{n<\om}[c_n/A_n](x)=\min\{n\in\om:x\in A_n\}$ if it exists.
Therefore, $\undef\diff_{n<\om}[c_n/A_n]$ is a realizer for $\Pi^1_1\lnp$.
To show the completeness, assume that a sequence $(A_n,f_n)_{n<\om}$ of pairs of $\tpbf{\Pi}^1_1$ sets and continuous functions is given.
To see that $\undef\diff_{n<\om}[f_n/A_n]$ is Weihrauch reducible to $\Pi^1_1\lnp$, let us consider the inner reduction $h$ which maps $x$ to a $\delta_{\Pi^1_1}$-name of $Q_x=\{n\in\om:x\in A_n\}$, and the outer reduction $k$ which maps $(x,n)$ to $f_n(x)$.
If $m=\min Q_x$ exists, then $\undef\diff_{n<\om}[f_n/A_n](x)=f_m(x)=k(x,\min Q_x)$.
If no such $m$ exists, $\undef\diff_{n<\om}[f_n/A_n](x)$ is undefined.
This verifies the assertion.
\end{proof}

\begin{prop}
$\Sigma^1_1\lnp$ is $\undef D^\ast_\om(\Sigma^0_1/\Pi^1_1)$-complete.
\end{prop}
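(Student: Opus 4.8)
The plan is to dualize the proof of the preceding proposition, replacing $\min$ by $\max$, the increasing operator by the decreasing one, and $\Pi^1_1$ by $\Sigma^1_1$. Throughout, write $S_x=\delta_{\Sigma^1_1}(x)$ for the $\tpbf{\Sigma}^1_1$ subset of $\om$ coded by $x$.

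For membership, I want to realize $\Sigma^1_1\lnp\circ\delta_{\Sigma^1_1}$, i.e.\ the map $x\mapsto\min S_x$, as a decreasing hybrid difference. I would set $B_n=\{x:(\forall k<n)\ k\notin S_x\}$, the codes whose $\Sigma^1_1$ set avoids $\{0,\dots,n-1\}$. Since $k\notin S_x$ is uniformly $\Pi^1_1$, each $B_n$ is $\Pi^1_1$ uniformly in $n$, and $(B_n)_{n<\om}$ is decreasing with $B_0=\om^\om$. A direct computation gives $x\in B_n\iff n\leq\min S_x$, so $\{n:x\in B_n\}=\{0,1,\dots,\min S_x\}$ and hence $\max\{n:x\in B_n\}=\min S_x$ whenever $S_x\neq\emptyset$; when $S_x=\emptyset$ this set is all of $\om$ and no maximum exists. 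Taking the constant functions $c_n\colon x\mapsto n$, I obtain $\undef\diffd_{n<\om}[c_n/B_n](x)=\min S_x$ exactly when $S_x\neq\emptyset$, and undefined otherwise, which is precisely $\Sigma^1_1\lnp\circ\delta_{\Sigma^1_1}$.

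For completeness, suppose $g=\undef\diffd_{n<\om}[f_n/B_n]$ for a uniformly $\Pi^1_1$ decreasing sequence $(B_n)_{n<\om}$ and a uniformly computable sequence $(f_n)_{n<\om}$. Because $(B_n)$ is decreasing, $\{n:x\in B_n\}$ is downward closed, so its complement $R_x=\{n:x\notin B_n\}$ is an upward-closed, i.e.\ final, segment of $\om$; moreover $x\notin B_n$ is uniformly $\Sigma^1_1$, so the map $h$ sending $x$ to a $\delta_{\Sigma^1_1}$-name of $R_x$ is a computable inner reduction. For $x\in{\rm dom}(g)$ the maximum $\gamma=\max\{n:x\in B_n\}$ exists, whence $R_x=\{n:n>\gamma\}$ is nonempty with $\min R_x=\gamma+1$; thus $h(x)\in{\rm dom}(\Sigma^1_1\lnp)$ and any solution to it is $\gamma+1$. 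The outer reduction $k(x,m)=f_{m-1}(x)$ then returns $k(x,\min R_x)=f_\gamma(x)=g(x)$, witnessing $g\leq_{\sf W}\Sigma^1_1\lnp$.

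I do not expect a serious obstacle here: the argument is the mirror image of the $\Pi^1_1\lnp$ case, with the decreasing/$\max$ side converted to the increasing/$\min$ side by passing to complements. The two points that require care are the off-by-one in recovering $\gamma$ as $\min R_x-1$ (forced by complementation), and checking that the degenerate cases---$S_x=\emptyset$ in membership, and $\{n:x\in B_n\}$ being empty or all of $\om$ in completeness---line up with $g$ being undefined, which is harmless since Weihrauch reducibility only constrains behaviour on ${\rm dom}(g)$.
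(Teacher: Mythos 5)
Your proof is correct and follows essentially the same route as the paper's: the same sets $B_n=\{x:(\forall k<n)\;k\notin S_x\}$ with constant functions $c_n$ and the identity $\max\{n:x\in B_n\}=\min S_x$ for membership, and for completeness the same inner reduction to the $\tpbf{\Sigma}^1_1$ set $\{n:x\notin B_n\}$ with the outer reduction shifted by one. The only cosmetic differences are notational (you phrase $B_n$ via $S_x$ rather than $P_x=\om\setminus S_x$, and you state the completeness for lightface data where the paper takes $\tpbf{\Pi}^1_1$ sets and continuous functions), neither of which changes the argument.
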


\begin{proof}
To see that $\Sigma^1_1\lnp$ is in $\undef D^\ast_\om(\Sigma^0_1/\Pi^1_1)$, define $B_n=\{x:(\forall k<n)\;k\in P_x\}$, and consider $c_n\colon x\mapsto n$.
Then, $(B_n)_{n<\om}$ is an decreasing sequence of $\tpbf{\Pi}^1_1$ sets.
Put $S_x=\om\setminus P_x$, and then one can see that $\min S_x=\max\{n<\om:x\in B_n\}$ whenever $S_x\not=\emptyset$.
Therefore, $\undef\diff^\ast_{n<\om}[c_n/B_n]$ is a realizer for $\Sigma^1_1\lnp$.
To show the completeness, assume that a sequence $(B_n,f_n)$ of pairs of $\tpbf{\Pi}^1_1$ sets and continuous functions is given.
Let us consider the inner reduction $h$ which maps $x$ to a $\delta_{\Sigma^1_1}$-name of $U_x=\{n\in\om:x\not \in B_n\}$, and the outer reduction $k$ which maps $(x,n+1)$ to $f_n(x)$, where $k(x,0)\uparrow$.
Note that $\min U_x=m+1$ if and only if $\{n<\om:x\in B_n\}=m$ as $(B_n)_{n<\om}$ is decreasing.
If $\min U_x>0$, say $\min U_x=m+1$, then $\undef\diff_{n<\om}^\ast[f_n/B_n](x)=f_m(x)=k(x,\min U_x)$.
If no such $m$ exists, $\undef\diff_{n<\om}^\ast[f_n/B_n](x)$ is undefined.
This verifies the assertion.
\end{proof}

\begin{prop}
$\Pi^1_1\lnp\equiv_{\sf W}\#\Sigma^1_1$ and $\Sigma^1_1\lnp\equiv_{\sf W}\#\Pi^1_1$.
\end{prop}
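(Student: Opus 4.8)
The plan is to prove both equivalences at once by establishing, for each $\Gamma\in\{\Pi^1_1,\Sigma^1_1\}$, the two reductions $\Gamma\lnp\leq_{\sf W}\#(\neg\Gamma)$ and $\#(\neg\Gamma)\leq_{\sf W}\Gamma\lnp$; the proposition then follows by taking $\Gamma=\Pi^1_1$ (so $\neg\Gamma=\Sigma^1_1$) and $\Gamma=\Sigma^1_1$ (so $\neg\Gamma=\Pi^1_1$). The guiding idea is a duality between ``least element'' and ``cardinality of an initial segment'': for a nonempty $A\subseteq\om$, the least element $\min A$ equals the number of natural numbers strictly below it.

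First, for $\Gamma\lnp\leq_{\sf W}\#(\neg\Gamma)$, given a $\Gamma$-name of $A$ I would let the inner reduction $h$ compute a name of
\[B=\{n\in\om:(\forall k\leq n)\;k\notin A\}.\]
Since $k\notin A$ defines a $\neg\Gamma$ condition and both $\Pi^1_1$ and $\Sigma^1_1$ are closed under bounded number quantification, $B$ is uniformly a $\neg\Gamma$ set. When $A\neq\emptyset$ one has $B=\{0,1,\dots,\min A-1\}$, so $B$ is finite with $\#B=\min A$; and $B$ is infinite exactly when $A=\emptyset$, so the domains match. Hence the outer reduction can be taken to be the identity on the returned value, $k({\tt x},m)=m$.

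Conversely, for $\#(\neg\Gamma)\leq_{\sf W}\Gamma\lnp$, given a $\neg\Gamma$-name of a finite set $C$, I would let $h$ compute a name of
\[A'=\{k\in\om:\#C\leq k\}.\]
The condition $\#C\leq k$ is the negation of ``there exist $k+1$ pairwise distinct elements of $C$'', and the latter is obtained from the $\neg\Gamma$ predicate $c\in C$ by a finite conjunction followed by a number-existential quantifier; as $\neg\Gamma$ is closed under these operations, the predicate ``there exist $k+1$ distinct elements of $C$'' is $\neg\Gamma$, and hence $A'$ is uniformly a $\Gamma$ set. When $C$ is finite, $A'=\{k:k\geq\#C\}$ is nonempty with $\min A'=\#C$, and $A'=\emptyset$ exactly when $C$ is infinite, so again the domains agree and the outer reduction is the identity.

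The computations are routine; the one place requiring care is the pointclass bookkeeping in the second reduction, where one must observe that passing from ``$c\in C$'' to ``there exist $k+1$ distinct elements of $C$'' involves only finite Boolean operations and an unbounded number quantifier $\exists^0$, none of which raise the class beyond $\neg\Gamma$ (that is, they keep us inside $\Sigma^1_1$, respectively $\Pi^1_1$). Granting the standard fact that these closure properties hold effectively on codes, both inner reductions $h$ are computable name transformations and both outer reductions are the identity, which gives the two equivalences.
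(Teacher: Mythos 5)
Your proof is correct, and its second half takes a genuinely different route from the paper. The first half, $\Gamma\lnp\leq_{\sf W}\#(\neg\Gamma)$, is the paper's argument: the set of $n$ whose entire initial segment misses $A$ is a $\neg\Gamma$ set whose cardinality is $\min A$ (your version with $\forall k\leq n$ is in fact the cleaner one --- the paper's $A^\ast=\{n:(\forall m<n)\;m\notin A\}$ has cardinality $\min A+1$, a harmless off-by-one). For the converse direction $\#(\neg\Gamma)\leq_{\sf W}\Gamma\lnp$, however, the paper argues in higher-computability style: it fixes a hyp-(co-)enumeration $(C[s])_{s<\ck}$ of the given finite set $C$ and builds the $\lnp$-instance dynamically over ordinal stages, enumerating the cardinalities $\#C[s]$ whenever $C[s]$ is finite (respectively, co-enumerating $[0,n)$ once $n$ elements have been enumerated), and it needs a boundedness-type argument to see that $\#C$ itself appears, i.e., that finiteness of $C$ is witnessed at some stage $s<\ck$. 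You replace all of this machinery by a one-line definability argument: ``$C$ has at least $k+1$ elements'' is a $\neg\Gamma$ condition (a finite conjunction of instances of $c\in C$ under a number quantifier), so $A'=\{k:\#C\leq k\}$ is a $\Gamma$ set whose least element is exactly $\#C$. This is more elementary and more symmetric with the first half: both directions become pure closure-property computations with identity outer reductions, and the only effectivity needed is the standard fact that closure of $\Pi^1_1$ and $\Sigma^1_1$ under number quantification and finite conjunction is uniform on codes. What the paper's stage-based proof buys is continuity with its running theme --- viewing these principles as hyp-enumeration and guessing processes --- but as a self-contained proof of the proposition your route is shorter and avoids the implicit appeal to $\Sigma^1_1$-boundedness.
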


\begin{proof}
Given $A\subseteq\om$, define $A^\ast=\{n\in\om:(\forall m<n)\;m\not\in A\}$.
Clearly, $\min A=\#A^\ast$.
If $A$ is $\tpbf{\Pi}^1_1$ then $A^\ast$ is $\tpbf{\Sigma}^1_1$, and moreover $A\mapsto A^\ast\colon\tpbf{\Pi}^1_1(\om)\to\tpbf{\Sigma}^1_1(\om)$ is computable, that is, given a $\tpbf{\Pi}^1_1$-code of $A$, one can effectively find a $\tpbf{\Sigma}^1_1$-code of $A^\ast$.
Similarly, if $A$ is $\Sigma^1_1$ then $A^\ast$ is $\Pi^1_1$, and moreover $A\mapsto A^\ast\colon\tpbf{\Sigma}^1_1(\om)\to\tpbf{\Pi}^1_1(\om)$ is computable.
Thus, the inner reduction $A\mapsto A^\ast$ witnesses that $\Pi^1_1\lnp\leq_{\sf W}\#\Sigma^1_1$ and $\Sigma^1_1\lnp\leq_{\sf W}\#\Pi^1_1$.

For the converse direction, assume that a ${\Sigma}^1_1$ set $A\subseteq\om$ is given.
If $A$ is finite, then this fact is witnessed at some stage $<\ck$ since $(\exists n)(\forall m>n)\;m\not\in A[s]$ is a $\Delta^1_1$ property, where $A[s]$ is the stage $s$ hyp-approximation of $A$.
Here, recall that $\Sigma^1_1$ is a higher analogue of ``co-c.e.,'' so $(A[s])_{s\in\om}$ is a co-enumeration of $A$, that is, $s<t$ implies $A[s]\supseteq A[t]$.
At each stage $s$, check if $A[s]$ is finite.
If so, enumerate $\#A[s]$ into $B$.
Then, one can easily see $\min B=\#A$.
Moreover, given a $\Sigma^1_1$-code of $A$, one can easily find a $\Pi^1_1$-code of $B$.
This argument can be uniformly relativizable.
Thus, the inner reduction $A\mapsto B$ witnesses that $\#\Sigma^1_1\leq_{\sf W}\Pi^1_1\lnp$.

Assume that a $\Pi^1_1$ set $B\subseteq\om$ is given.
If we see that the $n$th element is enumerated into $B$, i.e., $\#B[s]\geq n$, then co-enumerate $[0,n)$ from $A$.
Then, $\min A=\#B$.
Given a $\Pi^1_1$-code of $B$, one can easily find a $\Sigma^1_1$ code of $A$.
This argument can be uniformly relativizable.
The inner reduction $B\mapsto A$ witnesses that $\#\Pi^1_1\leq_{\sf W}\Sigma^1_1\lnp$.
\end{proof}

One can also consider the least number principle on a well-ordered set.
For a countable ordinal $\alpha$, let $\leq_\alpha$ be a well-order on $\N$ whose order type is $\alpha$.
Then, we use $\Gamma$-${\sf LNP}_\alpha$ to denote the least number principle with respect to $\leq_\alpha$; that is, $\Gamma$-${\sf LNP}_\alpha(A)$ is defined as the $\leq_\alpha$-smallest element of $A$ if it exists.
As in the above argument, one can observe that $\Pi^1_1$-${\sf LNP}_{\alpha}$ and $\Sigma^1_1$-${\sf LNP}_{\alpha}$ correspond to  $\undef D_\alpha(\tpbf{\Sigma}^0_1/\tpbf{\Pi}^1_1)$ and $\undef D^\ast_\alpha(\tpbf{\Sigma}^0_1/\tpbf{\Pi}^1_1)$, respectively.
This idea leads to our solution to Question \ref{four-main-question}.

\subsection{Fournier's problem}\label{sec:Fournier-problem}

The increasing difference hierarchy can be defined by the combination of the parity function and the least number principle on countable well-orders.
Recall that the parity function ${\sf par}\colon{\sf Ord}\to 2$ returns $1$ if a given input is odd; otherwise, returns $0$.
For a countable ordinal $\eta$, let $(A_\xi)_{\xi<\eta}$ be an increasing sequence of subsets of $\om^\om$, and put $A_\eta=\om^\om$.
Then, it is not hard to check the following:
\[\diff_{\xi<\eta}A_\xi=\Big\{x\in\om^\om:{\sf par}\big(\min\{\alpha\leq\eta:x\in A_\alpha\}\big)\not={\sf par}(\eta)\Big\}.\]

Similarly, if $(B_\xi)_{\xi<\eta}$ is an decreasing sequence of subsets of $\om^\om$, then
\[
\left(\diffd_{\xi<\eta}B_\xi\right)(x)=
\begin{cases}
1&\mbox{ if }{\sf par}(\max\{\xi<\eta:x\in B_\xi\})=0,\\
0&\mbox{ if $\max\{\xi<\eta:x\in B_\xi\}$ does not exist}.
\end{cases}
\]

The {\em $\Pi^1_1$-least number principle} on a well-ordered set $(\om,\preceq)$ states that any nonempty $\Pi^1_1$ set $P\subseteq\om$ has the $\preceq$-smallest element.
We represent the $\Pi^1_1$-least number principle as a function as in Section \ref{sec:Weihrauch}.
Here, recall that we have a total representation $\delta_{\Pi^1_1}$ of $\tpbf{\Pi}^1_1(\N)$.
A $\delta_{\Pi^1_1}$-name is often called a {\em $\bfP^1_1$-code}.
Let us use $P_x$ to denote the subset of $\om$ whose $\bfP^1_1$-code is $x$, i.e., $P_x=\delta_{\Pi^1_1}(x)$.
For $y\in{\sf WO}$ and $A\subseteq\N$, we define $\min_yA$ as the $\leq_y$-least element of $A$, i.e., $a=\min_yA$ if and only if $a\in A$ and $b\not\in A$ for any $b<_ya$.

To be more precise, we define $\tpbf{\Pi}^1_1\lnpwo$ as the partial function which, given a $\tpbf{\Pi}^1_1$-code $x$ of $P\subseteq \N$ and a well-order $y=(\N,\leq_y)$, returns the $\leq_y$-smallest element of $P$ whenever $P$ is nonempty, that is,
\[{\rm dom}(\tpbf{\Pi}^1_1\lnpwo)=\{(x,y):P_x\not=\emptyset\mbox{ and }y\in{\sf WO}\},\]
\[\tpbf{\Pi}^1_1\lnpwo(x,y)={\rm min}_yP_x.\]

We consider totalizations of $\tpbf{\Pi}^1_1\lnpwo$.
For each $c\in\om$, define $c\ast\tpbf{\Pi}^1_1\lnpwo$ as follows:
\[
(c\ast\tpbf{\Pi}^1_1\lnpwo)(x,y)=
\begin{cases}
{\rm min}_{y} P_x &\mbox{ if }P_x\not=\emptyset\mbox{ and }y\in{\sf WO},\\
c&\mbox{ otherwise.}
\end{cases}
\]

Note that, contrary to Section \ref{sec:Weihrauch}, we deal with a realizer (i.e., a function on codes) rather than a function between represented spaces.
This ensures that $c\ast\tpbf{\Pi}^1_1\lnpwo$ is a total $\N$-valued function on $\om^\om$.
However, to discuss the Wadge degree, it must be restricted to a two-valued function.
To simplify our argument, we assume that $c=0$.
Then define $\tpbf{\Pi}^1_1\lnpwo^{\upto 2}$ as follows:
\[
(\tpbf{\Pi}^1_1\lnpwo^{\upto 2})(x,y)=
\begin{cases}
{\sf par}_y({\rm min}_{y} P_x) &\mbox{ if }P_x\not=\emptyset\mbox{ and }y\in{\sf WO},\\
0&\mbox{ otherwise.}
\end{cases}
\]

Here, ${\sf par}_y(n)$ is the parity of the $\leq_y$-rank of $n$.
Then, $\tpbf{\Pi}^1_1\lnpwo^{\upto 2}$ is a two-valued function on $\om^\om$.



\begin{prop}\label{prop:lnpwo-non-approximable}
For any countable ordinal $\eta$, every $D_\eta(\tpbf{\Pi}^1_1)$ set is Wadge reducible to $\tpbf{\Pi}^1_1\lnpwo^{\upto 2}$.
\end{prop}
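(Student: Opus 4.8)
The plan is to realize the threshold ordinal $\gamma=\min\{\alpha\le\eta:z\in A_\alpha\}$ that governs membership in $A=\diff_{\xi<\eta}A_\xi$ as a $\leq_y$-least element, for a single hardwired well-order $y$, so that the parity of its $y$-rank reads off $A(z)$. We identify $\tpbf{\Pi}^1_1\lnpwo^{\upto 2}$ with its positive part $B=\{(x,y):\tpbf{\Pi}^1_1\lnpwo^{\upto 2}(x,y)=1\}$, so the goal is a continuous $\theta$ with $A=\theta^{-1}[B]$. Recall, with the convention $A_\eta=\om^\om$, that $z\in A$ holds iff ${\sf par}(\gamma)\neq{\sf par}(\eta)$, and that the threshold $\gamma\le\eta$ always exists.

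\emph{The reduction.} Since $\eta<\om_1$, I would first fix a genuine well-order $y$ on $\N$ of order type $\eta+1$, with $n_\alpha$ its element of $\leq_y$-rank $\alpha$, and set $A_\eta=\om^\om$. To $z$ I attach the set $P_z=\{n_\alpha:z\in A_\alpha,\ \alpha\le\eta\}$. As $(A_\alpha)_{\alpha\le\eta}$ is increasing with $A_\eta=\om^\om$, the index set $\{\alpha\le\eta:z\in A_\alpha\}$ is a nonempty final segment, so $P_z\neq\emptyset$ and $\min_yP_z=n_\gamma$, whence ${\sf par}_y(\min_yP_z)={\sf par}(\gamma)$. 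If ${\sf par}(\eta)=0$ this is already $A(z)$. If ${\sf par}(\eta)=1$, I instead take $y$ of order type $\eta+2$ by prepending one auxiliary bottom element, declared never to lie in $P_z$; every rank then shifts up by one, so $\min_yP_z$ has rank $\gamma+1$ and ${\sf par}_y(\min_yP_z)={\sf par}(\gamma+1)=1-{\sf par}(\gamma)=A(z)$. In both cases ${\sf par}_y(\min_yP_z)=A(z)$, and since $y\in{\sf WO}$ and $P_z\neq\emptyset$ always hold, the degenerate branch of $\tpbf{\Pi}^1_1\lnpwo^{\upto 2}$ never fires.

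\emph{Continuity of the code map.} It then remains to produce a $\bfP^1_1$-code $x_z$ of $P_z$ continuously in $z$, for then $\theta(z)=\langle x_z,y\rangle$ is continuous ($y$ being a fixed parameter) and satisfies $\theta(z)\in B\iff A(z)=1$, giving $A\leq_{\sf W}\tpbf{\Pi}^1_1\lnpwo^{\upto 2}$. For this I assign to each element $m$ of $y$ a $\delta_{\Pi^1_1}$-name $c_m$ of the $\tpbf{\Pi}^1_1$ set it represents (namely $A_\alpha$ when $m=n_\alpha$, and $\emptyset$ for the auxiliary element), and bundle these into a single real $C$ with $(C)_m=c_m$. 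Then $m\in P_z$ iff $z\in P_{(C)_m}$, and since the relation $z\in P_w$ is $\Pi^1_1$ in $(w,z)$, the relation $\{(m,z):m\in P_z\}$ is $\Pi^1_1(C)$; thus $P_z$ is $\Pi^1_1(C,z)$ uniformly in $z$. By the $s$-$m$-$n$ theorem one computes, relative to the fixed real $C$, an index $x_z$ with $P_{x_z}=P_z$, and $z\mapsto x_z$ is continuous.

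\emph{Main obstacle.} The conceptual heart is the single observation that $\min_y$ returns the threshold $\gamma$; once that is in place, $A(z)$ is recovered as the parity of a $y$-rank, modulo the one-step shift that absorbs ${\sf par}(\eta)$. The step demanding real care—hence the main obstacle—is the continuity of $z\mapsto x_z$: it rests on the uniformity of the representation $\delta_{\Pi^1_1}$ (equivalently, on the existence of a universal $\Pi^1_1$ set), which is what lets me hardwire the whole sequence $(A_\alpha)_{\alpha\le\eta}$ into one parameter while still computing codes for $P_z$ continuously. The parity bookkeeping and the verification that the degenerate branch never activates are then routine.
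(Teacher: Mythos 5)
Your overall strategy is the same as the paper's: realize the threshold $\gamma=\min\{\alpha\le\eta:z\in A_\alpha\}$ as the $\leq_y$-least element of a $\tpbf{\Pi}^1_1$ set coded continuously in $z$ (the paper takes $Q_x=\{n:x\in A_{|n|_\eta}\}$ for a hardwired well-order of type $\eta$), and read off $A(z)$ from the parity of its rank; your continuity argument via a single parameter $C$ and the universal $\Pi^1_1$ set is also fine and is exactly what the paper's terse ``one can find a continuous $\theta$'' amounts to. The even case of your proof is correct.

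However, your treatment of the case ${\sf par}(\eta)=1$ has a genuine gap: prepending an auxiliary bottom element does \emph{not} shift every rank up by one. It changes the rank of $n_\gamma$ from $\gamma$ to $1+\gamma$, and ordinal addition is not commutative: $1+\gamma=\gamma$ whenever $\gamma\geq\om$, so the parity of the least element's rank is unchanged at every infinite threshold. (For the same reason, the resulting order type is $1+(\eta+1)=\eta+1$, not $\eta+2$, when $\eta$ is infinite.) Concretely, take $\eta=\om+1$ (odd), $A_\xi=\emptyset$ for $\xi<\om$ and $A_\om=\om^\om$: then $\gamma=\om$ for every $z$, so ${\sf par}(\gamma)=0\not={\sf par}(\eta)$ and $A=\om^\om$, yet your reduction outputs ${\sf par}_y(\min_yP_z)={\sf par}(1+\om)={\sf par}(\om)=0$ for every $z$, so it reduces $A$ to the empty set. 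The repair is to shift on the \emph{right} rather than the left: take $y$ of order type $\eta+2$ and place $A_\xi$ at rank $\xi+1$, i.e.\ set $P_z=\{n_{\xi+1}:z\in A_\xi,\ \xi\le\eta\}$, with rank $0$ and all limit ranks permanently excluded from $P_z$. Since $\xi\mapsto\xi+1$ is order-preserving, $\min_yP_z=n_{\gamma+1}$, and ${\sf par}(\gamma+1)=1-{\sf par}(\gamma)$ holds for \emph{all} ordinals $\gamma$, including limits; this is the ``almost the same way'' the paper alludes to for odd $\eta$. With that one correction the rest of your argument goes through.
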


\begin{proof}
Assume that $\eta$ is even.
Fix a well-order $\leq_\eta$ on $\om$ whose order type is $\eta$, and put $\bar{\eta}=(\om,\leq_\eta)$.
Let $A=\diff_{\xi<\eta}A_\xi$ be a $D_\eta(\tpbf{\Pi}^1_1)$ set.
Then, the set $Q=\{(x,n):x\in A_{|n|_\eta}\}$ is $\tpbf{\Pi}^1_1$, where recall that $|n|_\eta$ is the $\leq_\eta$-rank of $n\in\N$ (see Section \ref{preliminaries}).
Thus, one can find a continuous function $\theta$ which, given $x$, returns a $\tpbf{\Pi}^1_1$-code of $Q_x=\{n\in\N:x\in A_{|n|_\eta}\}$.
We claim that $x\mapsto(\theta(x),\bar{\eta})$ is a Wadge reduction witnessing $A\leq_{\sf W}\tpbf{\Pi}^1_1\lnpwo^{\upto 2}$.
Since $\eta$ is even, $x\in A$ if and only if $\min\{\xi\leq\eta:x\in A_\xi\}$ is odd if and only if $\min_\eta(\{n\in\N:x\in A_{|n|_\eta}\})$ is odd if and only if $\tpbf{\Pi}^1_1\lnpwo^{\upto 2}(\theta(x),\bar{\eta})=1$.
This verify the claim.
The case where $\eta$ is odd can be proved in almost the same way.
\end{proof}

As a consequence, $c\ast\tpbf{\Pi}^1_1\lnpwo$ is not hyp-computable with finite mind-changes along any countable ordinal (since the hierarchy $(D_\eta(\tpbf{\Pi}^1_1))_{\eta<\om_1}$ does not collapse).
On the other hand, it is intuitively clear that $c\ast\tpbf{\Pi}^1_1\lnpwo$ is hyp-computable with finite mind-changes.
Indeed, it is hyp-computable with finite mind-changes along the uncountable ordinal $\om_1+1$.
To see this, let $(x,y)$ be an input.
Begin with the guess $c$ and ordinal counter $\om_1<\om_1+1$.
If $y$ is found to be ${\sf WO}$, then change the ordinal counter to the order type $|y|$ of $y$, which is smaller than $\om_1$.
When something is first enumerated into $P_x$, we guess the current $\leq_y$-least element $n\in P_x$ as a correct answer, and change the ordinal counter to $|n|_y<|y|$.
If some number which is $\leq_y$-smaller than the previous guess is enumerated into $P_x$, then change the guess as above.
Continue this procedure.
This algorithm eventually guesses the correct output of $c\ast\tpbf{\Pi}^1_1\lnpwo(x,y)$.
Clearly, this procedure is hyp-computable with finite mind-changes along $\om_1+1$.
Thus, we only need to formalize this argument as a $D_\om^\ast(\tpbf{\Pi}^1_1)$ set.

\begin{prop}\label{prop:lnpwo-approximable}
$\tpbf{\Pi}^1_1\lnpwo^{\upto 2}\in D_\om^\ast(\tpbf{\Pi}^1_1)$.
\end{prop}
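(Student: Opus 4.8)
The plan is to realize $\tpbf{\Pi}^1_1\lnpwo^{\upto 2}$ as the $0$-limit of a hyp-computable $0$-semicontinuous approximation and then appeal to the Baire-space (relativized) form of Proposition~\ref{prop:diff-characterization2}. Fix $z=(x,y)$ and let $\om_1^{z}$ be the least ordinal not computable in $z$. I would define a hyp-computable $\varphi\colon\om^\om\times\om_1^{z}\to 2$ that formalizes the informal $\om_1+1$-countdown preceding the statement, but that tracks a \emph{rank-confirmed} least element rather than the raw $\le_y$-least element of $P_x$. At a successor stage $s$, let $r_s$ be the least $j<s$ such that some $b\in P_x[s]$ has been confirmed by stage $s$ to lie in the well-founded part of $y$ with $\le_y$-rank exactly $j$, and let $m_s$ be the (unique) such witness; set $\varphi(z,s)=0$ if $y$ has already been confirmed to lie outside ${\sf WO}$ by stage $s$ or if no such $m_s$ exists, and $\varphi(z,s)={\sf par}_y(m_s)$ otherwise. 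At limit stages use the $0$-semicontinuous convention, and put $\varphi(z,0)=0$. All the ingredients ($P_x[s]$, the confirmed ranks, and the stage-$s$ approximation of the $\tpbf{\Sigma}^1_1$ fact $y\notin{\sf WO}$) are furnished by the approximation machinery of Section~\ref{preliminaries}, so $\varphi$ is hyp-computable.

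Next I would verify $0\lim_{s<\om_1^{z}}\varphi(z,s)=\tpbf{\Pi}^1_1\lnpwo^{\upto 2}(z)$ by cases. If $y\in{\sf WO}$ and $P_x\neq\emptyset$, write $m^\ast=\min_yP_x$ with rank $\rho$; minimality forces $r_s\ge\rho$ for all $s$, while $m^\ast$ is eventually enumerated with its rank confirmed, so $r_s=\rho$ and $m_s=m^\ast$ from some stage on, and $\varphi$ stabilizes at ${\sf par}_y(m^\ast)$, the correct value. If $P_x=\emptyset$ then $m_s$ is never defined and $\varphi\equiv 0$. Finally, if $y\notin{\sf WO}$, this is a true $\tpbf{\Sigma}^1_1$ fact about $y$ and hence is confirmed at some stage $s^\dagger<\om_1^{z}$, after which $\varphi$ is identically $0$, so $0\lim_{s}\varphi(z,s)=0$. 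In every case the limit agrees with the value prescribed by the definition of $\tpbf{\Pi}^1_1\lnpwo^{\upto 2}$.

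The crux is the bound ${\rm otype}({\tt mc}_\varphi(z))\le\om$, and this is exactly where the handling of $y\notin{\sf WO}$ is delicate: with the raw $\le_y$-minimum an ill-founded $y$ could produce infinitely many mind-changes whose stages are then followed by a reset, giving order type $\om+1$. The rank-confirmation device is what rules this out. Indeed $r_s$ is a non-increasing sequence of ordinals, so it strictly decreases only finitely often, whence the minimum-tracking contributes only finitely many changes of $\varphi$; the single reset triggered by confirming $y\notin{\sf WO}$ adds at most one more. Thus ${\tt mc}_\varphi(z)$ is finite, and a fortiori of order type $\le\om$, for every $z$. I expect this order-type control to be the main obstacle, since a literal transcription of the $\om_1+1$-countdown does not respect the $\le\om$ bound on its own. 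Having produced a two-valued hyp-computable $0$-semicontinuous $\varphi$ with $\varphi(z,0)=0$ and ${\rm otype}({\tt mc}_\varphi(z))\le\om$, the relativized Proposition~\ref{prop:diff-characterization2} places $\tpbf{\Pi}^1_1\lnpwo^{\upto 2}$ in $0D^\ast_\om(\tpbf{\Pi}^1_1)$, and passing from this two-valued function to its $1$-preimage via the correspondence between characteristic functions and the decreasing difference hierarchy recorded in Section~\ref{sec:diff-for-sets} yields $\tpbf{\Pi}^1_1\lnpwo^{\upto 2}\in D^\ast_\om(\tpbf{\Pi}^1_1)$.
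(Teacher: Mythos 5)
Your overall route (build a two-valued hyp-computable $0$-semicontinuous approximation with finitely many mind-changes per input and invoke the relativized Proposition~\ref{prop:diff-characterization2}) already differs from the paper's, which never uses that proposition: the paper writes down explicit decreasing $\tpbf{\Pi}^1_1$ sets $B_n$ (``the parity of ${\rm min}_y P_x[s]$ changes at least $n$ times along $x$-computable ordinal stages'') and verifies $\tpbf{\Pi}^1_1\lnpwo^{\upto 2}=\diffd_{n<\om}B_n$ directly. That difference would be acceptable, but your construction contains a genuine error in the case $y\notin{\sf WO}$. Your reset fires when ``$y$ has been confirmed to lie outside ${\sf WO}$ by stage $s$,'' and your verification asserts that, being a true $\tpbf{\Sigma}^1_1$ fact, $y\notin{\sf WO}$ is confirmed at some stage $s^\dagger<\om_1^{z}$. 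This is exactly backwards: in the stagewise approximation framework it is the $\Pi^1_1$ (hyp-c.e.) facts about the input that are witnessed at stages below $\om_1^{z}$, whereas a $\Sigma^1_1$ fact is precisely one that is never refuted, and it need not be positively witnessed at any stage $<\om_1^{z}$. Harrison's pseudo-well-orders give concrete counterexamples: there is a computable ill-founded linear order $y$ with no hyperarithmetic descending sequence, whose well-founded initial segment has order type $\ck$; for such $y$ the ill-foundedness is never ``confirmed'' below $\om_1^{z}$. Now let $P_x=\{b\}$ where $b$ is the element of $\leq_y$-rank $1$. Both ``$b\in P_x$'' and ``$b$ lies in the well-founded part of $y$ with rank exactly $1$'' are confirmed at some stage, your reset never fires, and your $\varphi(z,s)$ stabilizes at ${\sf par}(1)=1$; but $\tpbf{\Pi}^1_1\lnpwo^{\upto 2}(x,y)=0$ because $y\notin{\sf WO}$. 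So the approximation converges to the wrong value: your rank-confirmation device controls the \emph{number} of mind-changes, but the \emph{correctness of the limit} for ill-founded $y$ was resting entirely on the impossible $\Sigma^1_1$-confirmation.

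The repair is to orient the gate the other way, as in the paper's informal countdown preceding Proposition~\ref{prop:lnpwo-approximable}: make $0$ the default guess and begin tracking ${\rm min}_yP_x[s]$ only after the $\Pi^1_1$ event ``$y\in{\sf WO}$'' has been confirmed, which (since $y\leq_T z$) occurs at a stage $<\om_1^{z}$ exactly when it is true. After that point $y$ is genuinely well-founded, so tracking the raw $\leq_y$-minimum already changes value only finitely often and your rank-confirmation device becomes unnecessary; and if $y\notin{\sf WO}$ the process never leaves the default guess $0$, which is the required value. With this change your case analysis and mind-change count go through. Note, however, that even the repaired argument rests on the boldface, Baire-space form of Proposition~\ref{prop:diff-characterization2}, which the paper states only as a one-line remark and deliberately avoids using in later sections; the paper's own proof formalizes the same (repaired) algorithm set-theoretically, showing each $B_n$ is $\tpbf{\Pi}^1_1$ by quantifying over $x$-computable ordinals, and that is where the substantive work of the proposition lies.
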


\begin{proof}
Define $B_n$ as the set of all $(x,y)$ such that the parity (w.r.t.~the $\leq_y$-rank) of the $\leq_y$-least element of $P_x$ changes at least $n$ times under the cannonical hyp-computable guessing process.
In other words, $B_n$ is the set of all $(x,y)$ satisfying the following conditions:
\begin{align*}
y\in{\sf WO}\;\land\;(\exists s_1<\dots<s_n)\;&\big[{\sf par}_y({\rm min}_y P_x[s_1])=1\\
&\land\;(\forall i<n)\;{\sf par}_y({\rm min}_y P_x[s_i])\not={\sf par}_y({\rm min}_y P_x[s_{i+1}])\big].
\end{align*}

The standard hyperarithmetical quantification argument shows that $B_n$ is $\Pi^1_1$ since we only need to search for $x$-computable ordinals $s_i$.
To be more precise, first recall that the condition $n\in P_x$ is equivalent to ${\bf o}_P(n,x)\in{\sf WO}$.
In this case, ${\bf o}_P(n,x)$ is an $x$-computable well-order since ${\bf o}_P$ is computable.
Similarly, the condition $a=\min_y P_x[s]$ is equivalent to that $|{\bf o}_P(a,x)|<s$ and $|{\bf o}_P(b,x)|\geq s$ for any $b<_ya$.
This is a $\Delta^1_1$ condition on the $\Pi^1_1$ assumption that $s$ is an ordinal.
Putting it all together, the condition $(x,y)\in B_n$ can be written as follows:
\begin{align*}
y\in{\sf WO}\;\land\;(&\exists e_1,\dots,e_n)\;\big[(\forall i\leq n)\;\varphi_{e_i}^x\in{\sf WO}\;\land\;|\varphi_{e_1}^x|<\dots<|\varphi_{e_n}^x|\\
&\;\land\;(\exists a_1,\dots,a_n)[{\sf par}_y(a_1)=1\;\land\;(\forall i<n)\;{\sf par}_y(a_i)\not={\sf par}_y(a_{i+1})]\\
&\;\land \;(\forall i\leq n)\;[|{\bf o}_P(a_i,x)|<|\varphi_{e_i}^x|
\land\;(\forall b)\;(b<_ya_i\;\to\;|{\bf o}_P(b,x)|\geq|\varphi_{e_i}^x|)
]
\big]
\end{align*}

This only involves number quantification (with some $\Pi^1_1$ sets), so this property is $\Pi^1_1$.
It is clear that $(B_n)_{n\in\N}$ is decreasing.
Given $(x,y)$, let $n$ be the largest number such that $(x,y)\in B_n$ with witnesses $s_1<\dots<s_n$.
Then, we have ${\sf par}_y(\min_y P_x)={\sf par}_y(\min_y P_x[s_n])$; otherwise, we must find $s_{n+1}>s_n$ such that ${\sf par}_y(\min_y P_x[s_{n+1}])\not={\sf par}_y(\min_y P_x[s_n])$, which is impossible by the maximality of $n$.
Put $p_i={\sf par}_y(\min_y P_x[s_i])$.
Then, since $p_1=1$ and $p_i\not=p_{i+1}$, we have $p_i={\sf par}(i)$, and therefore, ${\sf par}_y(\min_y P_x)=p_n={\sf par}(n)$.
Consequently, if $n$ is the largest number such that $(x,y)\in B_n$ then ${\sf par}_y(\min_y P_x)={\sf par}(n)$.
Moreover, if there is no such an $n$ then $y\not\in{\sf WO}$ or $P_x$ is empty.
This shows that
\[
\tpbf{\Pi}^1_1\lnpwo^{\upto 2}(x,y)=\left(\diffd_{n<\om}B_n\right)(x,y)=
\begin{cases}
{\sf par}(n)&\mbox{ if }n=\max\{n:(x,y)\in B_n\},\\
0&\mbox{ if there is no such an $n$}.
\end{cases}
\]

Hence, $\tpbf{\Pi}^1_1\lnpwo^{\upto 2}\in D_\om^\ast(\tpbf{\Pi}^1_1)$.
\end{proof}

Consequently, $\tpbf{\Pi}^1_1\lnpwo^{\upto 2}$ is contained in the $\om$-th level of the decreasing difference hierarchy, but not in the increasing difference hierarchy.
This solves Fournier's question:

\begin{proof}[Proof of Theorem \ref{thm:solution-to-Fournier}]
By Proposition \ref{prop:lnpwo-approximable}, $\tpbf{\Pi}^1_1\lnpwo^{\upto 2}$ belongs to the $\om$-th level of the decreasing difference hierarchy.
If $\tpbf{\Pi}^1_1\lnpwo^{\upto 2}\in {\sf Diff}(\tpbf{\Pi}^1_1)$ would hold, then $\tpbf{\Pi}^1_1\lnpwo^{\upto 2}\in D_\eta(\tpbf{\Pi}^1_1)$ for some $\eta<\om_1$.
However, by Proposition \ref{prop:lnpwo-non-approximable}, every $D_{\eta+1}(\tpbf{\Pi}^1_1)$ set is Wadge reducible to $\tpbf{\Pi}^1_1\lnpwo^{\upto 2}$.
A simple diagonalization argument shows the existence of a $D_{\eta+1}(\tpbf{\Pi}^1_1)$ set which is not Wadge reducible to a $D_\eta(\tpbf{\Pi}^1_1)$ set.
This implies a contradiction; hence, $\tpbf{\Pi}^1_1\lnpwo^{\upto 2}\not\in {\sf Diff}(\tpbf{\Pi}^1_1)$.
\end{proof}

\subsection{Beyond the decreasing difference hierarchy}\label{sec:beyond-decreasing}

The decreasing difference hierarchy over $\tpbf{\Pi}^1_1$ sets occupies a very small part of the smallest $\sigma$-algebra including all $\tpbf{\Pi}^1_1$ sets.
Let us turn our attention to the first level of the $\sigma$-algebra.

\begin{definition}[see e.g.~Becker {\cite[Page 719]{Becker88}}]
For a pointclass $\Gamma$, let $\tpbf{\Sigma}^0_1(\Gamma)$ be the smallest family including all $\Gamma$ sets and closed under countable union, finite intersection, and continuous preimage.
A set $A$ is in $\tpbf{\Delta}^0_1(\Gamma)$ if both $A$ and its complement is contained in $\tpbf{\Sigma}^0_1(\Gamma)$.
\end{definition}

Higher limit lemma \cite[Proposition 6.1]{BGM17} states that ${\Delta}^0_1(\Sigma^1_1\cup\Pi^1_1)$ is equivalent to hyp-computability with ordinal mind-changes.
Note that this result does not imply that ${\Delta}^0_1(\Sigma^1_1\cup\Pi^1_1)$ is equivalent to ${\rm Diff}^\ast(\bfP^1_1)$.
This is because ${\rm Diff}^\ast({\Pi}^1_1)$ corresponds to hyp-computability with ordinal mind-changes involving some countable ordinal which bounds the number of mind-changes for all inputs, while in the case of ${\Delta}^0_1(\Sigma^1_1\cup\Pi^1_1)$, the number of mind-changes can be different for each input, and it is not always possible to give their upper bound by a single countable ordinal.
Indeed, using a similar argument as above, we show that ${\rm Diff}^\ast(\tpbf{\Pi}^1_1)$ is a proper subclass of $\tpbf{\Delta}^0_1(\tpbf{\Sigma}^1_1\cup\tpbf{\Pi}^1_1)$.

The {\em $\Sigma^1_1$-least number principle} on a well-ordered set $(\om,\preceq)$ states that any nonempty $\Sigma^1_1$ set $S\subseteq\om$ has the $\preceq$-smallest element.
Let us use $S_x$ to denote the subset of $\om$ whose $\tpbf{\Sigma}^1_1$-code is $x$, i.e., $S_x=\om\setminus P_x$.
One can define the totalization $c\ast\bfS^1_1\lnpwo$ of the partial $\om$-valued function $\bfS^1_1\lnpwo$ as above.
Then define its two-valued restriction $\tpbf{\Sigma}^1_1\lnpwo^{\upto 2}$ as follows:
\[
(\tpbf{\Sigma}^1_1\lnpwo^{\upto 2})(x,y)=
\begin{cases}
{\sf par}_y({\rm min}_{y} S_x) &\mbox{ if }S_x\not=\emptyset\mbox{ and }y\in{\sf WO},\\
0&\mbox{ otherwise.}
\end{cases}
\]

Then, $\tpbf{\Sigma}^1_1\lnpwo^{\upto 2}$ is a two-valued total function on $\om^\om$.

\begin{prop}\label{prop:lnpwo-non-approximable2}
For any countable ordinal $\eta$, every $D_\eta^\ast(\tpbf{\Pi}^1_1)$ set is Wadge reducible to $\tpbf{\Sigma}^1_1\lnpwo^{\upto 2}$.
\end{prop}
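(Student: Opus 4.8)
The plan is to mirror the proof of Proposition \ref{prop:lnpwo-non-approximable}, but with the increasing sequence of $\tpbf{\Pi}^1_1$ sets replaced by the decreasing one and, crucially, with the relevant pointclass shifted to $\tpbf{\Sigma}^1_1$ by passing to complements. First I would normalise $\eta$ to a limit ordinal. Given a decreasing sequence $(B_\xi)_{\xi<\eta}$ of $\tpbf{\Pi}^1_1$ sets, extend it to length $\eta+\om$ by putting $B_\xi=\emptyset$ for $\eta\leq\xi<\eta+\om$; this keeps the sequence decreasing and $\tpbf{\Pi}^1_1$, and leaves the difference unchanged because $\max\{\xi<\eta+\om:x\in B_\xi\}=\max\{\xi<\eta:x\in B_\xi\}$. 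Hence $D_\eta^\ast(\tpbf{\Pi}^1_1)\subseteq D_{\eta+\om}^\ast(\tpbf{\Pi}^1_1)$, and since the target $\tpbf{\Sigma}^1_1\lnpwo^{\upto 2}$ is a single fixed function, it suffices to treat the limit ordinal $\eta+\om$; so I may assume $\eta$ is a limit.

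Fix a well-order $\leq_\eta$ on $\om$ of order type $\eta$, put $\bar{\eta}=(\om,\leq_\eta)$, and let $A=\diffd_{\xi<\eta}B_\xi$ be a given $D_\eta^\ast(\tpbf{\Pi}^1_1)$ set. Because $(B_\xi)_{\xi<\eta}$ is decreasing, the set $\{\xi<\eta:x\in B_\xi\}$ is an initial segment of $\eta$, so its behaviour is controlled by the least index at which $x$ leaves the sequence. Accordingly I set $S_x=\{n\in\om:x\notin B_{|n|_\eta}\}$; the set $\{(x,n):x\notin B_{|n|_\eta}\}$ is $\tpbf{\Sigma}^1_1$, being the complement of the $\tpbf{\Pi}^1_1$ set $\{(x,n):x\in B_{|n|_\eta}\}$, so one obtains a continuous map $\theta$ sending $x$ to a $\tpbf{\Sigma}^1_1$-code of $S_x$. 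I claim $x\mapsto(\theta(x),\bar{\eta})$ witnesses $A\leq_{\sf W}\tpbf{\Sigma}^1_1\lnpwo^{\upto 2}$. Writing $\lambda=\min\{\xi<\eta:x\notin B_\xi\}$ when it exists, one checks $|\min_\eta S_x|_\eta=\lambda$, so the target value is ${\sf par}(\lambda)$; a short parity computation then gives $\diffd_{\xi<\eta}B_\xi(x)={\sf par}(\lambda)$. Indeed, when $\lambda=\mu+1$ is a successor, $\max\{\xi:x\in B_\xi\}=\mu$, and $\diffd_{\xi<\eta}B_\xi(x)=1$ iff $\mu$ is even iff ${\sf par}(\lambda)=1$; when $\lambda$ is $0$ or a limit the initial segment $[0,\lambda)=\{\xi:x\in B_\xi\}$ has no maximum, so the value is $0={\sf par}(\lambda)$.

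The one genuinely delicate point is the boundary case $S_x=\emptyset$, i.e.\ $x\in B_\xi$ for every $\xi<\eta$. There $\tpbf{\Sigma}^1_1\lnpwo^{\upto 2}(\theta(x),\bar{\eta})=0$ by definition, so I must also have $\diffd_{\xi<\eta}B_\xi(x)=0$; this is exactly where normalising $\eta$ to a limit pays off, since then $\{\xi<\eta:x\in B_\xi\}=[0,\eta)$ has no maximum and the difference value is $0$. (For a successor $\eta$ this case would instead read off the parity of the top index and force an awkward correction, which is why I prefer to make $\eta$ a limit at the outset.) Assembling the successor, limit, and empty cases yields $A(x)=\tpbf{\Sigma}^1_1\lnpwo^{\upto 2}(\theta(x),\bar{\eta})$ for all $x$, so $\theta$ is the desired Wadge reduction; the continuity of $\theta$ and the uniform $\tpbf{\Sigma}^1_1$-coding of $S_x$ are routine, exactly as in Proposition \ref{prop:lnpwo-non-approximable}.
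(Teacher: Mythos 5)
Your proof is correct and follows essentially the same route as the paper's: the same reduction $x\mapsto(\theta(x),\bar{\eta})$ via a continuous $\tpbf{\Sigma}^1_1$-coding of $S_x=\{n\in\om:x\notin B_{|n|_\eta}\}$, and the same parity computation comparing $\min_\eta S_x$ with $\max\{\xi<\eta:x\in B_\xi\}$. Your preliminary normalization of $\eta$ to a limit ordinal (padding the sequence with empty sets, so that $D^\ast_\eta(\tpbf{\Pi}^1_1)\subseteq D^\ast_{\eta+\om}(\tpbf{\Pi}^1_1)$) is a small but genuine improvement: the paper's proof passes silently over the boundary case where $\eta$ is a successor and $x\in\bigcap_{\xi<\eta}B_\xi$, in which $S_x=\emptyset$ forces the value $0$ on the right-hand side while the difference value is ${\sf par}(\eta-1)$, possibly $1$; your normalization is exactly what is needed to make that case come out correctly, and the rest of your case analysis (successor, limit, zero, and empty $S_x$) is accurate.
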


\begin{proof}
Fix a well-order $\leq_\eta$ on $\om$ whose order type is $\eta$, and put $\bar{\eta}=(\om,\leq_\eta)$.
Let $B=\diff_{\xi<\eta}B_\xi$ be a $D^\ast_\eta(\tpbf{\Pi}^1_1)$ set.
Then, the set $U=\{(x,n):x\not\in B_{|n|_\eta}\}$ is $\tpbf{\Sigma}^1_1$.
Thus, one can find a continuous function $\theta$ which, given $x$, returns a $\tpbf{\Sigma}^1_1$-code of $U_x=\{n\in\N:x\not\in B_{|n|_\eta}\}$.
We claim that $x\mapsto(\theta(x),\bar{\eta})$ is a Wadge reduction witnessing $B\leq_{\sf W}\tpbf{\Sigma}^1_1\lnpwo^{\upto 2}$.
If $\gamma=\max\{\xi\leq\eta:x\in B_\xi\}$ exists, $x\in B$ if and only if $\gamma$ is even.
In this case, the $\leq_\eta$-rank of $\min_\eta U_x$ is $\gamma+1$, which is odd, and therefore, $\tpbf{\Sigma}^1_1\lnpwo^{\upto 2}(\theta(x),\bar{\eta})=1$.
If no such a $\gamma$ exists, then $x\not\in B$, and the $\leq_\eta$-rank of $\min_\eta U_x$ is a limit ordinal, which is even, and therefore, $\tpbf{\Sigma}^1_1\lnpwo^{\upto 2}(\theta(x),\bar{\eta})=0$.
In either case, we have $B(x)=\tpbf{\Sigma}^1_1\lnpwo^{\upto 2}(\theta(x),\bar{\eta})$.
\end{proof}

As a consequence, $c\ast\tpbf{\Sigma}^1_1\lnpwo$ is not hyp-computable with fixed countable ordinal mind-changes (since the hierarchy $(D^\ast_\eta(\tpbf{\Pi}^1_1))_{\eta<\om_1}$ does not collapse).
On the other hand, it is intuitively clear that $c\ast\tpbf{\Sigma}^1_1\lnpwo$ is hyp-computable with ordinal mind-changes.
To see this, let $(x,y)$ be an input, and begin with the guess $c$.
If $y$ is found to be ${\sf WO}$, we guess the current $\leq_y$-least element $n\in S_x$ as a correct answer.
If all numbers which are $\leq_y$-smaller than or equal to the previous guess is removed from $S_x$, then change the guess as above.
Continue this procedure.
This algorithm eventually guesses the correct output of $c\ast\tpbf{\Sigma}^1_1\lnpwo(x,y)$.
Clearly, this procedure is hyp-computable with ordinal mind-changes.
Thus, we only need to formalize this argument as a $\tpbf{\Delta}^0_1(\bfS^1_1\cup\bfP^1_1)$ set.

\begin{prop}\label{prop:lnpwo-approximable2}
$\tpbf{\Sigma}^1_1\lnpwo^{\upto 2}\in\tpbf{\Delta}^0_1(\bfS^1_1\cup\bfP^1_1)$.
\end{prop}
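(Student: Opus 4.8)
The goal is to show that the two-valued function $\tpbf{\Sigma}^1_1\lnpwo^{\upto 2}$ lies in $\tpbf{\Delta}^0_1(\bfS^1_1\cup\bfP^1_1)$, i.e., that both it and its complement can be built from $\tpbf{\Sigma}^1_1$ and $\tpbf{\Pi}^1_1$ sets using countable unions, finite intersections, and continuous preimages. The informal algorithm preceding the statement already tells us what to formalize: on input $(x,y)$, once $y$ is recognized as a well-order, we repeatedly guess the current $\leq_y$-least element of $S_x$ and revise the guess each time that element is removed from $S_x$ (where $S_x=\om\setminus P_x$ is $\tpbf{\Sigma}^1_1$); the final stable guess is $\min_y S_x$, and its parity is the answer. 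Since $S_x$ is co-enumerated, the $\leq_y$-least element can only move $\leq_y$-upward, so this process makes ordinally many (but not boundedly many) mind-changes — which is exactly why we land in $\tpbf{\Delta}^0_1(\bfS^1_1\cup\bfP^1_1)$ rather than in $\mathrm{Diff}^\ast(\tpbf{\Pi}^1_1)$.

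\medskip
\noindent\textbf{The plan.} The approach mirrors Proposition \ref{prop:lnpwo-approximable}, but now the stage-indexed approximations must be expressed as a countable union rather than folded into a single decreasing $\tpbf{\Pi}^1_1$ sequence. First I would fix, for each natural number $n$, the auxiliary set
\[
C_n=\{(x,y): y\in{\sf WO}\ \land\ |n|_y\text{ is even}\ \land\ n=\min{}_y S_x\}.
\]
The key observation is that the predicate ``$n=\min_y S_x$'', on the hypothesis $y\in{\sf WO}$, is arithmetic in the two sets $S_x$ and $P_x$ together with $y$: it asserts $n\in S_x$ (a $\tpbf{\Sigma}^1_1$ condition) and $(\forall b<_y n)\,b\notin S_x$, i.e. $(\forall b<_y n)\,b\in P_x$ (a $\tpbf{\Pi}^1_1$ condition, using that $\leq_y$ has only finitely many predecessors below each point is \emph{not} available, so this is a genuine $\tpbf{\Pi}^1_1$ universal-number quantifier over the $\leq_y$-predecessors). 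Thus $C_n$ is a finite intersection of a $\tpbf{\Sigma}^1_1$ set, a $\tpbf{\Pi}^1_1$ set, and the $\tpbf{\Pi}^1_1$ set ${\sf WO}$, hence lies in $\tpbf{\Delta}^0_1(\bfS^1_1\cup\bfP^1_1)$. Then I would set the positive part to be $\bigcup_n C_n$, which is a countable union of such sets and therefore lies in $\tpbf{\Sigma}^0_1(\bfS^1_1\cup\bfP^1_1)$ by definition.

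\medskip
\noindent\textbf{Computing both the set and its complement.} To conclude $\tpbf{\Delta}^0_1$ membership I must express the complement of $\tpbf{\Sigma}^1_1\lnpwo^{\upto 2}$ in the same form. Here $\tpbf{\Sigma}^1_1\lnpwo^{\upto 2}(x,y)=0$ holds exactly when either $y\notin{\sf WO}$, or $S_x=\emptyset$, or $S_x\neq\emptyset$ and the $\leq_y$-rank of $\min_y S_x$ is odd. I would handle this symmetrically: let $C'_n$ be defined like $C_n$ but with ``$|n|_y$ is odd,'' so $\bigcup_n C'_n$ captures the nonempty-odd case and again lies in $\tpbf{\Sigma}^0_1(\bfS^1_1\cup\bfP^1_1)$; the degenerate cases $y\notin{\sf WO}$ (complement of ${\sf WO}$, which is $\tpbf{\Sigma}^1_1$) and $S_x=\emptyset$ (which is $(\forall n)\,n\in P_x$, a $\tpbf{\Pi}^1_1$ condition) are directly of the required form. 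Taking the union of these three pieces gives the complement as a member of $\tpbf{\Sigma}^0_1(\bfS^1_1\cup\bfP^1_1)$, so both the function and its negation are in $\tpbf{\Sigma}^0_1(\bfS^1_1\cup\bfP^1_1)$, yielding the claim.

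\medskip
\noindent\textbf{Main obstacle.} The routine part is the bookkeeping that exactly one of $C_n$, $C'_n$, or the degenerate cases fires on each input, and that the resulting union matches the piecewise definition of $\tpbf{\Sigma}^1_1\lnpwo^{\upto 2}$. The genuinely delicate point is the complexity analysis of the predicate $n=\min_y S_x$: I must verify that, relative to the $\tpbf{\Pi}^1_1$ assumption $y\in{\sf WO}$, the clause $(\forall b<_y n)\,b\in P_x$ contributes only a $\tpbf{\Pi}^1_1$ (number-quantified) condition and does not secretly require a set quantifier, so that $C_n$ really is a \emph{finite} Boolean-plus-continuous-preimage combination of $\tpbf{\Sigma}^1_1$ and $\tpbf{\Pi}^1_1$ sets and not something higher. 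This is analogous to the hyperarithmetical quantification bookkeeping carried out in the proof of Proposition \ref{prop:lnpwo-approximable}, and I expect it to go through by the same use of the reduction ${\bf o}_P$ to ${\sf WO}$; the essential structural input is simply that a single countable union over $n$ suffices precisely because no single countable ordinal need bound the mind-change count.
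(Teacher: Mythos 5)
Your proof is correct, but it takes a genuinely different and more elementary route than the paper's. You decompose directly over the candidate value $n$ of $\min_y S_x$: the point that makes this work is your observation that, for fixed $n$, the predicate ``$n=\min_y S_x$'' splits into the $\tpbf{\Sigma}^1_1$ condition $n\in S_x$ and the $\tpbf{\Pi}^1_1$ condition $(\forall b<_y n)\,b\in P_x$ (a number quantifier over a $\Pi^1_1$ matrix), so each $C_n$ is a finite intersection of sets from $\bfS^1_1\cup\bfP^1_1$ together with a Borel parity condition, and the countable union over $n$ stays inside $\tpbf{\Sigma}^0_1(\bfS^1_1\cup\bfP^1_1)$; the complement is handled symmetrically, with the two degenerate disjuncts $y\notin{\sf WO}$ (which is $\tpbf{\Sigma}^1_1$) and $S_x=\emptyset$ (which is $\tpbf{\Pi}^1_1$). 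The paper instead formalizes the mind-change algorithm literally: it introduces stage-indexed approximations $S_x[s]$, the $\tpbf{\Pi}^1_1$ set $W$ of triples $(x,y,s)$ for which the stage-$s$ guess $\min_y S_x[s]$ changes at some later stage, and a relativized Spector ($\Sigma^1_1$-bounding) argument showing that the true minimum is already attained at some $(x\oplus y)$-computable ordinal; the value-$i$ sets are then written as countable unions over indices of $(x\oplus y)$-computable ordinals. Your approach buys brevity and avoids all effective machinery (stages, $\leq_T$, boundedness), exploiting the fact that the guessed object is a natural number which can simply be quantified over; the paper's approach buys a proof that visibly instantiates the ``hyp-computability with ordinal mind-changes'' reading that motivates the whole section. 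Two small repairs to yours: first, the paper's convention is ${\sf par}(\xi)=1$ iff $\xi$ is odd, so the value-$1$ part of $\tpbf{\Sigma}^1_1\lnpwo^{\upto 2}$ corresponds to \emph{odd} $\leq_y$-rank, meaning your $C_n$ and $C'_n$ have their parities swapped (a harmless relabeling); second, your ``main obstacle'' is in fact no obstacle at all --- no appeal to ${\bf o}_P$ or stagewise approximation is needed, since $(\forall b<_y n)\,b\in P_x$ is $\tpbf{\Pi}^1_1$ simply because $\Pi^1_1$ is closed under number quantification, and the parity of $|n|_y$ is arithmetic (hence Borel) on ${\sf WO}$.
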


\begin{proof}
We first consider the set of stages at which the least value of $S_x$ changes later.
In other words, define $W$ as follows:
\begin{align*}
(x,y,s)\in W\iff &y,s\in{\sf WO}\;\land\exists t\leq_Tx\oplus y\;(t\in{\sf WO}\\
&\land\;|t|>|s|\;\land\;({\rm min}_yS_x[s]<{\rm min}_yS_x[t]\;\lor\;S_x[t]=\emptyset)).
\end{align*}

It is easy to see that $W$ is $\Pi^1_1$.
We claim that if $S_x\not=\emptyset$ and $y\in{\sf WO}$ then there exists an ordinal $s\leq_Tx\oplus y$ such that ${\rm min}_yS_x[s]={\rm min}_yS_x$.
To see this, assume that $a={\rm min}_yS_x$.
Then, for any $b<_ya$ there exists an $x$-computable ordinal $s_b\in{\sf WO}$ such that $b\in P_x[s_b]$.
Note that $A=\{b\in\om:b<_ya\}$ is a $\Delta^1_1(y)$ set.
Then consider the map $b\mapsto e_b$, where $e_b$ is an $x$-computable index of such $s_b$, which is a $\Delta^1_1(x\oplus y)$ function.
The usual $\Sigma^1_1$-bounding argument (i.e., the relativized Spector boundedness theorem) ensures that $s=\sup\{s_b:b<_ya\}$ is an $(x\oplus y)$-computable ordinal.
This verifies the claim.

This claim shows that, for any $y\in{\sf WO}$ and $i<2$, the statement $S_x\not=\emptyset$ and ${\sf par}_y(\min_yS_x)=i$ holds if and only if there exists an ordinal $s\leq_Tx\oplus y$ such that $(x,y,s)\not\in W$ and ${\sf par}_y(\min_yS_x[s])=i$.
Therefore,
\begin{align*}
\tpbf{\Sigma}^1_1\lnpwo^{\upto 2}(x,y)=1
\iff
&y\in{\sf WO}\;\land\;\exists s\leq_Tx\oplus y\\
&(s\in{\sf WO}\;\land\;(x,y,s)\not\in W\;\land\;{\sf par}_y({\rm min}_yS_x[s])=1),
\end{align*}
and similarly
\begin{align*}
\tpbf{\Sigma}^1_1\lnpwo^{\upto 2}(x,y)=0
\iff
y&\not\in{\sf WO}\ \lor\ \forall s\leq_Tx\oplus y\;(x,y,s)\in W\\
\lor\ &[y\in{\sf WO}\;\land\;\exists s\leq_Tx\oplus y\\
&(s\in{\sf WO}\;\land\;(x,y,s)\not\in W\;\land\;{\sf par}_y({\rm min}_yS_x[s])=0)].
\end{align*}

The former formula is clearly $\tpbf{\Sigma}^0_1(\bfS^1_1\cup\bfP^1_1)$.
The latter formula contains a universal quantification, but the first line and the second line are separated, and the subformula ``$\forall s\leq_Tx\oplus y\;(x,y,s)\in W$'' is $\Pi^1_1$.
Hence, the latter formula is also $\tpbf{\Sigma}^0_1(\bfS^1_1\cup\bfP^1_1)$.
Note also that the first line in the latter formula is equivalent to the statement that either $y\not\in{\sf WO}$ or $S_x=\emptyset$ holds.
Consequently, we get $\tpbf{\Sigma}^1_1\lnpwo^{\upto 2}\in\tpbf{\Delta}^0_1(\bfS^1_1\cup\bfP^1_1)$.
\end{proof}

Consequently, $\tpbf{\Sigma}^1_1\lnpwo^{\upto 2}$ is contained in the first $\tpbf{\Delta}$-level of the $\sigma$-algebra containing $\bfP^1_1$ sets, but not in the decreasing difference hierarchy.
That is, $\tpbf{\Sigma}^1_1\lnpwo^{\upto 2}$ witnesses the properness of the inclusion ${\sf Diff}^\ast(\tpbf{\Pi}^1_1)\subsetneq\tpbf{\Delta}^0_1(\tpbf{\Pi}^1_1\cup\tpbf{\Sigma}^1_1)$.
This solves Question \ref{question:beyond-difference}:

\begin{proof}[Proof of Theorem \ref{thm:beyond-difference}]
By Proposition \ref{prop:lnpwo-approximable2}, $\tpbf{\Sigma}^1_1\lnpwo^{\upto 2}$ belongs to $\tpbf{\Delta}^0_1(\bfS^1_1\cup\bfP^1_1)$.
If $\tpbf{\Sigma}^1_1\lnpwo^{\upto 2}\in {\sf Diff}^\ast(\tpbf{\Pi}^1_1)$ would hold, then $\tpbf{\Sigma}^1_1\lnpwo^{\upto 2}\in D_\eta^\ast(\tpbf{\Pi}^1_1)$ for some $\eta<\om_1$.
However, by Proposition \ref{prop:lnpwo-non-approximable2}, every $D_{\eta+1}^\ast(\tpbf{\Pi}^1_1)$ set is Wadge reducible to $\tpbf{\Sigma}^1_1\lnpwo^{\upto 2}$.
A simple diagonalization argument shows the existence of a $D_{\eta+1}^\ast(\tpbf{\Pi}^1_1)$ set which is not Wadge reducible to a $D_\eta^\ast(\tpbf{\Pi}^1_1)$ set.
This implies a contradiction; hence, $\tpbf{\Sigma}^1_1\lnpwo^{\upto 2}\not\in {\sf Diff}^\ast(\tpbf{\Pi}^1_1)$.
\end{proof}

\section{The $\om$-th level of the decreasing difference hierarchy}\label{sec:4}

\subsection{$\om_1$-prewellordered coproduct}

Next, we analyze the structure of $\Delta(D^\ast_\om(\tpbf{\Pi}^1_1))$ sets.
We first show the following useful characterization of $\Delta(D^\ast_\om(\tpbf{\Pi}^1_1))$ sets.

\begin{prop}
A set $P\subseteq\om^\om$ belongs to $\Delta(D^\ast_\om(\tpbf{\Pi}^1_1))$ if and only if there exists an infinite decreasing sequence $(P_n)_{n\in\om}$ of $\tpbf{\Pi}^1_1$ sets such that $\bigcap_{n<\om}P_n=\emptyset$ and $P=\diff^\ast_{n<\om}P_n$.
\end{prop}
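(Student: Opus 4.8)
The plan is to prove both directions, with the backward (``if'') direction being short and the forward (``only if'') direction carrying all the weight.

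For the backward direction, suppose $(P_n)_{n<\om}$ is a decreasing sequence of $\tpbf{\Pi}^1_1$ sets with $\bigcap_n P_n=\emptyset$ and $P=\diff^\ast_{n<\om}P_n$. Then $P\in D^\ast_\om(\tpbf{\Pi}^1_1)$ by definition, so it remains only to check $\neg P\in D^\ast_\om(\tpbf{\Pi}^1_1)$. For this I would use the shift trick: set $Q_0=\om^\om$ and $Q_{n+1}=P_n$, which is again a decreasing sequence of $\tpbf{\Pi}^1_1$ sets. For each $x$, the hypothesis $\bigcap_n P_n=\emptyset$ forces $\max\{n:x\in P_n\}$ either to be undefined (when $x\notin P_0$) or to be some finite $k$; in the first case the $Q$-maximum is $0$ and in the second it is $k+1$, so a direct case check gives $\diff^\ast_{n<\om}Q_n=\neg P$. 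The point I would stress is that this shift duality is valid \emph{precisely because} $\bigcap_n P_n=\emptyset$: on a nonempty intersection the two sides would disagree. Hence $P\in\Delta(D^\ast_\om(\tpbf{\Pi}^1_1))$.

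For the forward direction, assume $P\in\Delta(D^\ast_\om(\tpbf{\Pi}^1_1))$ and fix decreasing $\tpbf{\Pi}^1_1$ sequences with $P=\diff^\ast_{n<\om}B_n$ and $\neg P=\diff^\ast_{n<\om}C_n$. Writing $\beta(x)=\max\{n:x\in B_n\}$ and $\gamma(x)=\max\{n:x\in C_n\}$ (values in $\om\cup\{\infty\}$), the difference operator gives: if $\beta(x)<\infty$ then $x\in P\iff\beta(x)$ is even, and if $\gamma(x)<\infty$ then $x\in P\iff\gamma(x)$ is odd. The crucial observation is that $\bigcap_n B_n\subseteq\neg P$ and $\bigcap_n C_n\subseteq P$, whence $\bigcap_n B_n\cap\bigcap_n C_n=\emptyset$; equivalently, for every $x$ at least one of $\beta(x),\gamma(x)$ is finite. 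The goal is to merge the two representations into a single decreasing $\tpbf{\Pi}^1_1$ sequence whose intersection is empty, i.e.\ one in which \emph{every} point exits at a finite level. To build it I would pass to nice hyp-approximations $B_n[s],C_n[s]$ ($s<\ck$), arranged via a computable cofinal sequence $\lambda_n\to\ck$ that delays the revelation of level $n$ until stage $\lambda_n$, so that each stage-$s$ column $\{n:x\in B_n[s]\}$ is a finite initial segment; let $\beta_s(x),\gamma_s(x)$ be the resulting finite approximate ranks, nondecreasing in $s$ with suprema $\beta(x),\gamma(x)$. I then define a merged guess
\[
g_s(x)=
\begin{cases}
\mathbf{1}[\beta_s(x)\text{ even}] & \text{if } \beta_s(x)\le\gamma_s(x),\\
\mathbf{1}[\gamma_s(x)\text{ odd}] & \text{if } \beta_s(x)>\gamma_s(x),
\end{cases}
\]
with a fixed initial value before anything is revealed. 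A three-case analysis (both ranks finite; only $\beta$ finite; only $\gamma$ finite) shows that $g_s(x)\to\mathbf{1}[x\in P]$, since the coordinate that stabilizes eventually controls the comparison and its parity rule is exactly the membership rule for $P$; the same analysis shows $g_\cdot(x)$ changes its mind only finitely often, because up to any $s<\ck$ the pair $(\beta_s,\gamma_s)$ takes only finitely many values and after the finite rank stabilizes and overtakes the other coordinate the guess is constant.

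Finally I would turn the guess into the sequence by counting its mind-changes, following the template of Proposition \ref{prop:lnpwo-approximable}: define $P_n$ to assert the existence of stages $s_1<\dots<s_k$ witnessing an alternating run of $g$ of the appropriate length, the initial value being chosen so that $\max\{n:x\in P_n\}$ is even iff $x\in P$. Each $P_n$ is $\tpbf{\Pi}^1_1$ by the standard hyperarithmetical-quantification argument (the $s_i$ may be taken $x$-computable with a $\Delta^1_1$ matrix, and Spector boundedness bounds the search), the sequence is decreasing, and $\bigcap_n P_n=\emptyset$ because every mind-change count is finite; by construction $\diff^\ast_{n<\om}P_n=P$. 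The boldface statement follows by relativizing the whole construction to a parameter coding $(B_n)$ and $(C_n)$. The main obstacle is exactly this merge: designing a single hyp-computable guess that provably changes its mind only finitely often at \emph{every} point—so that the intersection comes out empty—while still recovering the correct parity, the key leverage being the disjointness $\bigcap_n B_n\cap\bigcap_n C_n=\emptyset$.
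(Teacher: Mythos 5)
Your backward direction is correct and coincides with the paper's own (the shift $Q_0=\om^\om$, $Q_{n+1}=P_n$, valid exactly because $\bigcap_nP_n=\emptyset$). The forward direction, however, rests on an object that does not exist: you call for ``a computable cofinal sequence $\lambda_n\to\ck$'' to delay the revelation of level $n$ so that every stage-$s$ column $\{n:x\in B_n[s]\}$ is finite. By the very Spector/$\Sigma^1_1$-boundedness you invoke at the end of your own sketch, every computable (indeed every hyperarithmetic) $\om$-sequence of ordinals below $\ck$ has supremum strictly below $\ck$; this is precisely admissibility of $\ck$, so no such $(\lambda_n)$ exists. The ``finite approximate ranks'' premise then fails: if $x\in\bigcap_nB_n$ --- entirely possible pointwise, since only the joint intersection $\bigcap_nB_n\cap\bigcap_nC_n$ is empty --- then the entry stages of $x$ into the sets $B_n$ are bounded below the relevant admissible ordinal (relativized Spector again), so from some stage on the column is all of $\om$ and $\beta_s(x)=\infty$. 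Consequently your three-case analysis and the claim that ``up to any $s<\ck$ the pair $(\beta_s,\gamma_s)$ takes only finitely many values'' are unavailable as stated; your three cases also never cover the situation $\{n:x\in B_n\}=\emptyset$ (or the same for $C$). A further wrinkle: for the boldface statement the stages attached to a point $x$ range below $\om_1^{x\oplus p}$, which varies with $x$, so no single delaying sequence could serve all inputs even in principle. The strategy is repairable --- allow $\beta_s,\gamma_s\in\om\cup\{\infty\}$, note that disjointness of the two intersections guarantees at most one coordinate is infinite at any stage, declare $\infty$ larger than every integer in your comparison rule, and observe that the guess is locked once the coordinate with finite limit stabilizes and the comparison settles --- but as written the central lemma of your forward direction fails.

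Separately, you have made the forward direction enormously harder than necessary: the paper's proof is purely Boolean and needs no approximations, stages, or mind-change counting. From $P=\diff^\ast_nB_n$ and $\neg P=\diff^\ast_nC_n$, intersect the two witnessing sequences after shifting the second: $P_0=B_0$ and $P_{n+1}=B_{n+1}\cap C_n$. Then $\{n:x\in P_n\}$ is an initial segment of $\om$ whose maximum, when it exists, is $\min(\beta(x),\gamma(x)+1)$ under the obvious conventions, and a six-case check ($x\in P$ or not, crossed with the other rank being finite, infinite, or undefined) shows this maximum exists and is even exactly when $x\in P$; moreover $\bigcap_nP_n\subseteq\bigcap_nB_n\cap\bigcap_nC_n=\emptyset$. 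In other words, the entire effective apparatus you develop --- which is exactly where the error lives --- can be deleted.
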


\begin{proof}
If $P\in\Delta(D^\ast_\om(\tpbf{\Pi}^1_1))$, then there exist infinite decreasing sequences $(A_n)_{n\in\om}$ and $(B_n)_{n\in\om}$ of $\tpbf{\Pi}^1_1$ sets such that $P=\diff^\ast_nA_n$ and $\neg P=\diff^\ast_nB_n$.
Note that $x\in\bigcap_n A_n$ implies $x\not\in\diff^\ast_nA_n$, so $x\in P$, and similarly, $x\in\bigcap_nB_n$ implies $x\not\in P$.
Hence, $\bigcap_nA_n\cap\bigcap_nB_n=\emptyset$.
Then, define $P_n=A_n\cap B_{n+1}$.
Then, $\bigcap_nP_n\subseteq\bigcap_n A_n\cap\bigcap_nB_n=\emptyset$.
Moreover, it is not hard to check that $P=\diff^\ast_nP_n$.

For the converse direction, let $(P_n)$ be such that $\bigcap_nP_n=\emptyset$ and $P=\diff^\ast_nP_n$.
Then, define $A_n=P_n$, $B_0=\om^\om$, and $B_n=P_{n+1}$.
It is easy to check that $P=\diff^\ast_nA_n$ and $\neg P=\diff^\ast_nB_n$.
\end{proof}

As we have already mentioned, the class $\Delta(D^\ast_\om(\tpbf{\Pi}^1_1))$ corresponds to hyp-computability with finite mind-changes.
As usual, the process of mind-changes can be represented by a well-founded tree.
We describe the details below.

Under {\sf AD}, recall that every nonselfdual subset of $\om^\om$ is Wadge equivalent to a subset of $2^\om$; see e.g.~\cite[Lemma 1.5]{Kih19}, and any selfdual set is Wadge equivalent to the join of countably many nonselfdual set; see e.g.~\cite{AnLo12}.
Therefore, one may assume that everything is a subset of the $\sigma$-compact space $\C=\om\times 2^\om$.
Recall that ${\sf WO}\subseteq 2^{\om\times\om}\simeq 2^\om$ is the set of all well-orders on $\om$.

Let $(P_n)_{n\in\om}$ be an infinite decreasing sequence of $\tpbf{\Pi}^1_1$ sets in $\C$ such that $\bigcap_{n\in\om}P_n=\emptyset$.
Since ${\sf WO}$ is $\tpbf{\Pi}^1_1$-complete, there exists a continuous function $\theta_n$ witnessing $P_n\leq_{\sf W}{\sf WO}$.
Then, define $P_n[c,\alpha]=\theta_n^{-1}\{\alpha\}\cap(\{c\}\times 2^\om)$ for any $c\in\om$ and $\alpha\in{\sf WO}$.
Clearly, $P_n[c,\alpha]$ is compact, and we have $P_n=\bigcup_{c,\alpha}P_n[c,\alpha]$.
Hereafter we omit $c$ to simplify the notation.

\begin{definition}\label{def:tree-system}
Given such a sequence $P=(P_n)_{n\in\om}$, one can define a system on a labeled ${\sf WO}$-branching well-founded tree $T_P\subseteq{\sf WO}^{<\om}$ as follows:

To each node $\sigma$ of $T_P$ of length $n$, assign the sequence $(P_n[\alpha])_{\alpha\in{\sf WO}}$.
If the length $n$ is even, then the node is labeled by $0$; otherwise, it is labeled by $1$.
The domain on $\sigma$ is defined as $Q_\sigma:=\bigcap_{m<n}P_m[\sigma(m)]$.
We add the $\alpha$-th immediate successor of $\sigma$ whenever $Q_\sigma\cap P_n[\alpha]$ is nonempty.
In other words, define $T_P=\{\sigma\in{\sf WO}^{<\om}:Q_\sigma\not=\emptyset\}$.
\end{definition}

Note that if $x\in Q_\sigma$ then $\sigma=\langle\theta_0(x),\theta_1(x),\dots,\theta_{|\sigma|-1}(x)\rangle$ since $x\in P_m[\sigma(m)]$ if and only if $\theta_m(x)=\sigma(m)$.

\begin{obs}
For $P=(P_n)_{n<\om}$, if $\bigcap_{n\in\om}P_n=\emptyset$
then $T_P$ is well-founded.
\end{obs}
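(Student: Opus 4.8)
The plan is to argue by contraposition, establishing the statement in the equivalent form: if $T_P$ is ill-founded, then $\bigcap_{n\in\om}P_n\neq\emptyset$. Since a tree on ${\sf WO}^{<\om}$ is well-founded exactly when it has no infinite branch, I would assume toward a contradiction that there is some $g\in{\sf WO}^\om$ with $g\upto n\in T_P$ for every $n$, and then manufacture from $g$ a single point witnessing that the intersection of the $P_n$ is nonempty.

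First, by the very definition of $T_P$ (Definition \ref{def:tree-system}), the assumption $g\upto n\in T_P$ says precisely that $Q_{g\upto n}=\bigcap_{m<n}P_m[g(m)]\neq\emptyset$ for each $n$, and these sets clearly decrease as $n$ grows. The key step is to extract a point lying in all of them simultaneously: I would observe that each piece $P_m[g(m)]=\theta_m^{-1}\{g(m)\}$ (inside the relevant compact slice $\{c\}\times 2^\om$ of $\C$) is compact, so every finite intersection $Q_{g\upto n}$ is a nonempty compact set. A decreasing sequence of nonempty compact sets has the finite-intersection property, hence I can fix some $x\in\bigcap_n Q_{g\upto n}$. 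Finally, because $g(m)\in{\sf WO}$ and $\theta_m$ witnesses $P_m\leq_{\sf W}{\sf WO}$, we have $P_m[g(m)]\subseteq P_m$ for every $m$; thus $x\in\bigcap_{m}P_m$, contradicting the hypothesis $\bigcap_{n}P_n=\emptyset$.

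The only genuine content lies in the middle step, namely that each $Q_{g\upto n}$ is \emph{compact} rather than merely nonempty; the rest is bookkeeping once the branch $g$ is in hand. This is exactly where the earlier reduction to the $\sigma$-compact space $\C=\om\times 2^\om$ earns its keep: continuity of $\theta_m$ makes $\theta_m^{-1}\{g(m)\}$ closed, and intersecting with a single compact slice $\{c\}\times 2^\om$ makes $P_m[g(m)]$ compact. I expect the main point to be flagging why compactness is available, since without it a decreasing sequence of nonempty sets $(Q_{g\upto n})_n$ could have empty intersection and the entire argument would collapse.
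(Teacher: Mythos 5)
Your proposal is correct and takes essentially the same route as the paper: the paper's proof also extracts an infinite path $p\in{\sf WO}^\om$, notes that each finite intersection $\bigcap_{m\leq n}P_m[p(m)]$ is a nonempty compact set, and invokes compactness (the finite intersection property) to obtain a point of $\bigcap_{n<\om}P_n[p(n)]\subseteq\bigcap_n P_n$, contradicting the hypothesis. The only difference is that you spell out \emph{why} the slices $P_m[g(m)]=\theta_m^{-1}\{g(m)\}\cap(\{c\}\times 2^\om)$ are compact and why they sit inside $P_m$, details the paper leaves implicit from its earlier setup.
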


\begin{proof}
If $T_P$ has an infinite path $p\in{\sf WO}^\om$, then for any $n$, the compact set $\bigcap_{m\leq n}P_m[p(m)]$ is nonempty.
Therefore, by compactness, the whole intersection $\bigcap_{n<\om}P_n[p(n)]\subseteq\bigcap_nP_n$ is also nonempty, which contradicts our assumption on $(P_n)_{n\in\om}$.
\end{proof}

Note also that $T_P$ is Borel on ${\sf WO}^{<\om}$.
One can recover the information on $\diff^\ast_nP_n$ in the following manner.

\begin{obs}\label{obs:tree-mind-change-x}
Let $P$ and $(\theta_n)$ be as above.
For $x\in\om^\om$, define $\sigma_x$ as the maximal initial segment of $(\theta_\ell(x))_{\ell<\om}$ which is contained in $T_P$.
Then, $x\in\diff^\ast_{n<\om}P_n$ if and only if $\sigma_x$ is labeled by $1$.
\end{obs}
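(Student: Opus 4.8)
The plan is to compute the length of $\sigma_x$ directly from the decreasing sequence $(P_n)_{n<\om}$ and then match its parity against the membership condition for $\diff^\ast_{n<\om}P_n$. Writing $N(x)=\max\{n<\om:x\in P_n\}$, I would first recall the behaviour of $\diff^\ast$ on a decreasing family (Section~\ref{sec:Fournier-problem}): we have $x\in\diff^\ast_{n<\om}P_n$ if and only if $N(x)$ exists and ${\sf par}(N(x))=0$, i.e.\ $N(x)$ is even. Because $(P_n)$ is decreasing with $\bigcap_nP_n=\emptyset$, for every $x$ exactly one of two things happens: either $x\notin P_0$, in which case $N(x)$ is undefined; or $N(x)$ exists, and then $x\in P_m$ for all $m\le N(x)$ while $x\notin P_m$ for all $m>N(x)$.

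The core step is to show that, for each $n$, the initial segment $\langle\theta_0(x),\dots,\theta_{n-1}(x)\rangle$ of $(\theta_\ell(x))_{\ell<\om}$ lies in $T_P$ if and only if $x\in P_m$ for all $m<n$. For the forward direction, any node of $T_P\subseteq{\sf WO}^{<\om}$ has all of its entries in ${\sf WO}$, and $\theta_m(x)\in{\sf WO}$ is precisely the statement $x\in P_m$. For the backward direction, the remark preceding the statement gives $x\in P_m[\theta_m(x)]$ whenever $x\in P_m$, so that $x\in Q_{\langle\theta_0(x),\dots,\theta_{n-1}(x)\rangle}$; thus this $Q$-set is nonempty and the segment is a node of $T_P$. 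Using that $(P_n)$ is decreasing, the condition ``$x\in P_m$ for all $m<n$'' is equivalent to ``$n=0$ or $x\in P_{n-1}$'', so the maximal $n$ for which the segment belongs to $T_P$, namely $|\sigma_x|$, equals $0$ when $x\notin P_0$ and equals $N(x)+1$ otherwise.

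It then remains to compare parities, using that a node of $T_P$ of length $k$ is labeled $1$ exactly when $k$ is odd. If $x\notin P_0$, then $|\sigma_x|=0$ is even, $\sigma_x$ is labeled $0$, and indeed $x\notin\diff^\ast_{n<\om}P_n$. If $x\in P_0$, then $|\sigma_x|=N(x)+1$ is odd if and only if $N(x)$ is even, which is exactly the condition $x\in\diff^\ast_{n<\om}P_n$ recalled in the first paragraph. In both cases $\sigma_x$ is labeled $1$ precisely when $x\in\diff^\ast_{n<\om}P_n$, which is the assertion.

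The only genuinely delicate point is the backward direction of the criterion for membership in $T_P$: one must verify that $x$ itself witnesses $Q_\sigma\ne\emptyset$ for the relevant initial segment, which rests on the identity $x\in P_m[\theta_m(x)]\iff x\in P_m$ from the remark (and on the convention, implicit in omitting $c$, that the first coordinate is held fixed). Everything else is a routine bookkeeping of parities against the decreasing structure of $(P_n)$.
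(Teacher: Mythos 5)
Your proof is correct and takes essentially the same route as the paper's: both arguments identify $|\sigma_x|$ with $\max\{\ell:x\in P_\ell\}+1$ (or $0$), using exactly the two facts you isolate --- that $x$ itself witnesses $Q_\sigma\neq\emptyset$ for its own initial segments, and that $\theta_m(x)\in{\sf WO}$ if and only if $x\in P_m$ --- and then match parities against the labeling convention. If anything, you treat the edge case $x\notin P_0$ (where the maximum does not exist) more explicitly than the paper's proof, which is a minor improvement in bookkeeping rather than a different method.
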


\begin{proof}
Assume that $\sigma_x=(\theta_{\ell}(x))_{\ell<k}$.
Then, $x\in Q_{\sigma_x}\subseteq\bigcap_{m<k}P_m$.
Since this is maximal, $\sigma_x':=\sigma_x\fr\theta_k(x)$ is not contained in $T_P$.
If $\theta_k(x)\in{\sf WO}$ then we have $x\in Q_{\sigma_x'}$, so $\sigma_x'$ must be contained in $T_P$.
Hence, $\theta_k(x)\not\in{\sf WO}$, so $x\not\in P_k$.
Therefore, $\max\{\ell:x\in P_\ell\}=k-1$.
Thus, $x\in\diff^\ast_nP_n$ if and only if ${\sf par}(k-1)=0$, so ${\sf par}(k)=1$.
This means that the length of $\sigma_x$ is odd.
In this case, $\sigma_x$ is labeled by $1$.
\end{proof}

In a more inductive manner, one can recover the information of $\diff^\ast_nP_n$.
For $\sigma\in T_P$, inductively define $f_\sigma\colon\om^\om\to\{0,1\}$ as follows:
If a leaf $\rho$ is labeled by $0$, define $f_\rho\colon x\mapsto 0$.
If a leaf $\rho$ is labeled by $1$, define $f_\rho\colon x\mapsto 1$.
If a node $\sigma$ is not a leaf, and is labeled by $i$, define 
\[
f_\sigma(\alpha,x):=i\ast \bigsqcup_{\alpha\in{\sf WO}}f_{\sigma\alpha}(x):=
\begin{cases}
\ f_{\sigma\alpha}(x) &\mbox{ if }\alpha\in{\sf WO},\\
\ i &\mbox{ if }\alpha\not\in{\sf WO}.
\end{cases}
\]

\begin{lemma}\label{lem:from-tree-to-diff}
$x\in\diff^\ast_nP_n\iff f_{\langle\rangle}((\theta_\ell(x))_{\ell<\om})=1$, where $\langle\rangle$ is the empty string.
\end{lemma}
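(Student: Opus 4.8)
The statement in Lemma~\ref{lem:from-tree-to-diff} asserts that the recursively-defined labeling function $f_{\langle\rangle}$ correctly computes membership in $\diff^\ast_nP_n$. The plan is to deduce this from Observation~\ref{obs:tree-mind-change-x}, which already tells us that $x\in\diff^\ast_nP_n$ if and only if the maximal initial segment $\sigma_x$ of $(\theta_\ell(x))_{\ell<\om}$ lying in $T_P$ is labeled by $1$. So the essential work is to show that evaluating the recursion $f_{\langle\rangle}$ along the branch $(\theta_\ell(x))_{\ell<\om}$ is exactly the same as reading off the label of the node $\sigma_x$ where this branch exits the tree.

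\emph{The inductive argument.}
First I would record the key local fact: for a non-leaf node $\sigma$ labeled by $i$, we have $f_\sigma(\alpha,x)=f_{\sigma\alpha}(x)$ when $\alpha\in{\sf WO}$ and $f_\sigma(\alpha,x)=i$ when $\alpha\notin{\sf WO}$, directly from the definition of the $i\ast\bigsqcup$ operator. The natural way to exploit this is an induction that tracks how far the branch $(\theta_\ell(x))_{\ell<\om}$ has descended into $T_P$. Concretely, I would prove by induction on $k$ (downward along the branch, using well-foundedness of $T_P$) that for every node $\sigma\in T_P$, the value $f_\sigma\big((\theta_\ell(x))_{|\sigma|\le\ell<\om}\big)$ equals the label of $\tau_{\sigma,x}$, where $\tau_{\sigma,x}$ is the maximal extension of $\sigma$ along $(\theta_\ell(x))_\ell$ that remains in $T_P$. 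The base case is when $\sigma$ is a leaf: then $\sigma=\tau_{\sigma,x}$ and $f_\sigma$ is constantly equal to the leaf's label, as required.

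\emph{The inductive step and where the branch exits.}
For the inductive step, suppose $\sigma$ is a non-leaf labeled $i$ and set $\alpha=\theta_{|\sigma|}(x)$. There are two cases. If $\alpha\notin{\sf WO}$, then $\sigma\fr\alpha\notin T_P$, so $\sigma$ itself is the maximal extension $\tau_{\sigma,x}$, its label is $i$, and indeed $f_\sigma(\alpha,\dots)=i$. If $\alpha\in{\sf WO}$, then $x\in Q_{\sigma\fr\alpha}$ forces $\sigma\fr\alpha\in T_P$, the maximal extension through $\sigma$ agrees with the maximal extension through $\sigma\fr\alpha$, and $f_\sigma(\alpha,x)=f_{\sigma\alpha}(x)$ lets me apply the induction hypothesis to $\sigma\fr\alpha$. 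Taking $\sigma=\langle\rangle$ yields $f_{\langle\rangle}\big((\theta_\ell(x))_{\ell<\om}\big)=\text{label of }\sigma_x$, and combining with Observation~\ref{obs:tree-mind-change-x} gives the equivalence.

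\emph{The main obstacle.}
I expect the only real subtlety to be verifying that the branch $(\theta_\ell(x))_{\ell<\om}$ does exit $T_P$ at a \emph{well-defined finite node} $\sigma_x$, i.e.\ that the induction is grounded. This is exactly where well-foundedness of $T_P$ (the Observation following Definition~\ref{def:tree-system}, using $\bigcap_nP_n=\emptyset$ and compactness) is indispensable: it guarantees that $(\theta_\ell(x))_\ell$ cannot stay inside $T_P$ forever, so $\sigma_x$ exists and the recursion terminates. Once that is in place, the two-case inductive step is routine, as is the final appeal to Observation~\ref{obs:tree-mind-change-x} to translate ``label $1$'' into membership in $\diff^\ast_nP_n$.
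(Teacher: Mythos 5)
Your proof is correct and takes essentially the same approach as the paper's: both arguments show that evaluating $f_{\langle\rangle}$ along the branch $(\theta_\ell(x))_{\ell<\om}$ returns the label of the exit node $\sigma_x$ and then invoke Observation~\ref{obs:tree-mind-change-x} to translate that label into membership in $\diff^\ast_nP_n$. The paper phrases this as a direct unfolding $f_{\langle\rangle}((\theta_\ell(x))_{\ell<\om})=f_{\sigma_x}(\theta_n(x),\theta_{n+1}(x),\dots)=\mbox{label of }\sigma_x$, where $n=|\sigma_x|$ is the least index with $\theta_n(x)\notin{\sf WO}$, which is precisely your induction along the branch written iteratively.
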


\begin{proof}
Let $\sigma_x=(\theta_m(x))_{m<n}$ is a string as in Observation \ref{obs:tree-mind-change-x}.
Then $n$ be the least number such that $\theta_n(x)\not\in{\sf WO}$.
For $\sigma=\sigma_x$, by the definition of $f_\sigma$, note that
\[f_{\langle\rangle}(\theta_0(x),\theta_1(x),\theta_2(x),\dots)=f_\sigma(\theta_n(x),\theta_{n+1}(x),\dots).\]

If $\sigma$ is labeled by $i$ then $f_\sigma(\theta_n(x),y)=i$ for any $y$ since $\theta_n(x)\not\in{\sf WO}$.
Hence, $\sigma$ is labeled by $i$ if and only if $f_{\langle\rangle}((\theta_\ell(x))_{\ell<\om})=i$.
By Observation \ref{obs:tree-mind-change-x}, $\sigma=\sigma_x$ is labeled by $1$ if and only if $x\in\diff^\ast_nP_n$.
This verifies the claim.
\end{proof}

Thus, $\diff^\ast_nP_n$ is constructed from constant functions and the ${\sf WO}$-indexed coproduct.
To formalize this idea, given a pointclass $\Gamma$, define $\Delta=\Gamma\cap\neg\Gamma$ as usual.

\begin{definition}\label{def:uniform-delta}
We say that $(A_\alpha)_{\alpha\in I}$ is a {\em uniform $\Delta$ collection} if
there are $B,C\in\Gamma$ such that for any $\alpha$ and $z$,
\[\alpha\in I\implies [z\in A_\alpha\iff (\alpha,z)\in B\iff (\alpha,z)\not\in C].\]

We say that a pointclass $\Gamma$ is {\em strictly closed under $\om_1$-prewellordered ($\om_1$-pwo) coproduct} if,
for any uniform $\Delta$ collection $(A_\alpha)_{\alpha\in{\sf WO}}$, we have
\[
\bigsqcup_{\alpha\in{\sf WO}}A_\alpha:=\{(\alpha,x):\alpha\in{\sf WO}\land x\in A_\alpha\}\in\Delta.
\]
\end{definition}

If we identify a set $A\subseteq\om^\om$ with its characteristic function $\chi_A\colon\om^\om\to 2$, then $\bigsqcup_\alpha A_\alpha$ and $0\ast\bigsqcup_\alpha A_\alpha$ are the same.
One can also see that if $\Gamma$ is strictly closed under $\om_1$-pwo coproduct, then we have $1\ast\bigsqcup_{\alpha\in{\sf WO}}A_\alpha\in\Delta$.
To see this, first note that $A\in\Delta$ implies $\neg A\in\Delta$.
Similarly, if $(A_\alpha)_{\alpha\in{\sf WO}}$ is uniformly $\Delta$, so is $(\neg A_\alpha)_{\alpha\in{\sf WO}}$.
Thus,
\[1\ast\bigsqcup_{\alpha\in{\sf WO}}A_\alpha:=\{(\alpha,x):\alpha\not\in{\sf WO}\lor x\in A_\alpha\}=\neg\bigsqcup_{\alpha\in{\sf WO}}(\neg A_\alpha)\in\Delta.\]

\begin{obs}\label{obs:closed-pwo-coprod}
$D_\om^\ast(\tpbf{\Pi}^1_1)$ is strictly closed under $\om_1$-pwo coproduct.
\end{obs}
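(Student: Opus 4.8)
The plan is to reduce the statement to two elementary manipulations of decreasing $\tpbf{\Pi}^1_1$ sequences, bypassing the tree machinery of Definition \ref{def:tree-system} entirely. First I would unwind the hypothesis: a uniform $\Delta$ collection $(A_\alpha)_{\alpha\in{\sf WO}}$ is by definition witnessed by sets $B,C\in D^\ast_\om(\tpbf{\Pi}^1_1)$ with $z\in A_\alpha\iff(\alpha,z)\in B\iff(\alpha,z)\notin C$ for every $\alpha\in{\sf WO}$. I would fix decreasing $\tpbf{\Pi}^1_1$ sequences $(B^{(n)})_{n<\om}$ and $(C^{(n)})_{n<\om}$ with $B=\diff^\ast_nB^{(n)}$ and $C=\diff^\ast_nC^{(n)}$, and write $W={\sf WO}\times\om^\om$, which is $\tpbf{\Pi}^1_1$ since ${\sf WO}$ is. The coproduct then unfolds as $\bigsqcup_{\alpha\in{\sf WO}}A_\alpha=W\cap B=W\setminus C$, so the whole task becomes showing this single set lies simultaneously in $D^\ast_\om(\tpbf{\Pi}^1_1)$ and in its dual; membership in $\Delta$ follows at once.

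For the positive direction I would simply intersect the witnessing sequence with $W$. The sets $W\cap B^{(n)}$ form a decreasing $\tpbf{\Pi}^1_1$ sequence, and since $(W\cap B^{(n)})\setminus(W\cap B^{(n+1)})=W\cap(B^{(n)}\setminus B^{(n+1)})$, one obtains $\diff^\ast_n(W\cap B^{(n)})=W\cap\diff^\ast_nB^{(n)}=W\cap B$. This already gives $\bigsqcup_\alpha A_\alpha\in D^\ast_\om(\tpbf{\Pi}^1_1)$.

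For the dual direction I would instead use $C$, noting that $\neg\bigsqcup_\alpha A_\alpha=\neg(W\setminus C)=\neg W\cup C$. I would realize this as a decreasing difference by prepending two auxiliary levels to the $C$-sequence: set $E^{(0)}=\om^\om$, $E^{(1)}=W$, and $E^{(n+2)}=W\cap C^{(n)}$, which is again a decreasing $\tpbf{\Pi}^1_1$ sequence. On a point with $\alpha\notin{\sf WO}$, the largest level it meets is $E^{(0)}$ (index $0$, even), so it is accepted, matching $\neg W$; on a point with $\alpha\in{\sf WO}$, every index is shifted up by exactly two relative to the $C$-computation, so the parity of the maximal index—and hence acceptance—agrees with membership in $C$, while the boundary points lying in $\bigcap_nC^{(n)}$ or in no $C^{(n)}$ are correctly rejected. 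Thus $\diff^\ast_nE^{(n)}=\neg W\cup C$, giving $\neg\bigsqcup_\alpha A_\alpha\in D^\ast_\om(\tpbf{\Pi}^1_1)$, i.e. $\bigsqcup_\alpha A_\alpha\in\neg D^\ast_\om(\tpbf{\Pi}^1_1)$, and combining with the previous paragraph yields $\bigsqcup_\alpha A_\alpha\in\Delta(D^\ast_\om(\tpbf{\Pi}^1_1))$.

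The step I expect to require the most care is the parity bookkeeping in the dual direction. The gate $\neg W$ is only $\tpbf{\Sigma}^1_1$, so it must be absorbed into a decreasing $\tpbf{\Pi}^1_1$ difference without disturbing the parity that encodes membership in $C$. Prepending a single level would flip that parity—computing $\neg W\cup\neg C$ rather than $\neg W\cup C$—so it is essential to prepend exactly two levels, with the intermediate level $E^{(1)}=W$ chosen precisely so that the $\alpha\notin{\sf WO}$ points fall out at the even index $0$ while the $\alpha\in{\sf WO}$ points keep the parity of their $C$-rank. Once the shift is fixed, verifying the various cases is routine.
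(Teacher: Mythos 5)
Your proposal is correct and follows essentially the same route as the paper: the paper's proof also decomposes the coproduct as $\pi_0^{-1}[{\sf WO}]\cap B$ and its complement as $\pi_0^{-1}[\neg{\sf WO}]\cup C$, and then appeals to the closure of $D_\om^\ast(\tpbf{\Pi}^1_1)$ under finite intersection with $\tpbf{\Pi}^1_1$ sets and finite union with $\tpbf{\Sigma}^1_1$ sets. The only difference is that you verify these two closure properties explicitly (the intersection-with-$W$ trick and the two-level prepend with its parity bookkeeping, which is indeed the right fix), whereas the paper simply cites them as known.
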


\begin{proof}
The algorithmic reason for this can be explained as follows:
Given an input $(\alpha,x)$, we have $\alpha\not\in{\sf WO}$ at the first stage, so the learner guesses that $(\alpha,x)\in\bigsqcup_{\alpha\in{\sf WO}}A_\alpha$ is false.
If the learner sees $\alpha\in{\sf WO}$ at some stage, change her mind, and then since $A_\alpha\in\Delta(D_\om^\ast(\tpbf{\Pi}^1_1))$, the learner only needs to simulate a guessing process to answer whether $x\in A_\alpha$ or not with finite mind-changes.

The set-theoretic reason for this is as follows:
Let a pair $(B,C)$ be a $\Delta$-definition of $(A_\alpha)_{\alpha\in{\sf WO}}$ as in Definition \ref{def:uniform-delta}.
It is easy to see that $\bigsqcup_{\alpha\in{\sf WO}}A_\alpha$ and its complement can be written as $\pi_0^{-1}[{\sf WO}]\cap B$ and $\pi_0^{-1}[\neg{\sf WO}]\cup C$, respectively.
Since $D_\om^\ast(\tpbf{\Pi}^1_1)$ is closed under finite union with $\tpbf{\Sigma}^1_1$ sets and finite intersection with $\tpbf{\Pi}^1_1$ sets, both sets belong to $D_\om^\ast(\tpbf{\Pi}^1_1)$.
\end{proof}

A key basic fact on the closure property for $\Delta$ under ${\sf AD}$ is that, if $\Delta$ is closed under something, then it is closed {\em uniformly}, as shown by Becker \cite{Bec84}.
As a special case, we have the following:

\begin{fact}[Becker \cite{Bec84}, {\sf AD}]\label{lem:Becker-uniform}
If $\Gamma$ is strictly closed under $\om_1$-pwo coproduct, then there exists a continuous function which, given a uniform $\Delta$-code of $(A_\alpha)_{\alpha\in{\sf WO}}$, returns a $\Delta$-code of $\bigsqcup_{\alpha\in{\sf WO}}A$.
\end{fact}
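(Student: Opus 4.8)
The plan is to derive the \emph{uniform} (continuous-on-codes) closure from the merely \emph{pointwise} closure hypothesis by exhibiting a single ``master'' witness set that is itself an instance of the coproduct operation, and then extracting the continuous map from a good parametrization of $\Gamma$. Under {\sf AD}, Wadge's lemma guarantees that the nonselfdual pointclass $\Gamma$ possesses a universal set $G\subseteq\om^\om\times\om^\om$ together with a good parametrization (an s-m-n property): whenever $R\subseteq Z\times\om^\om$ lies in $\Gamma$ for a product space $Z$, there is a continuous $s\colon Z\to\om^\om$ with $R_z=G_{s(z)}$ for all $z\in Z$. I fix such a $G$ and agree that a $\Gamma$-code of a set is a $G$-index, a $\Delta$-code is a pair of $\Gamma$-codes for a set and its complement, and a uniform $\Delta$-code of $(A_\alpha)_{\alpha\in{\sf WO}}$ is a pair $p=(b,c)$ such that $B^p:=G_b$ and $C^p:=G_c$ witness the uniform $\Delta$ condition of Definition \ref{def:uniform-delta}.

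Next I would form the master set
\[\mathbb{U}=\{(p,\alpha,x):\alpha\in{\sf WO}\ \land\ (\alpha,x)\in B^p\},\]
whose $p$-section $\mathbb{U}_p$ is exactly $\join_{\alpha\in{\sf WO}}A^p_\alpha$. The crucial step is to show $\mathbb{U}\in\Delta$, and this is where the strict closure hypothesis does the work. Reindex by regarding $\alpha$ as the coproduct index and the pair $(p,x)$ as the fibre variable, setting $\tilde A_\alpha=\{(p,x):(\alpha,x)\in B^p\}$. Since $(\alpha,p,x)\mapsto(b,(\alpha,x))$ is continuous and $G$ is universal, the sets $\tilde B=\{(\alpha,(p,x)):(\alpha,x)\in B^p\}$ and $\tilde C=\{(\alpha,(p,x)):(\alpha,x)\in C^p\}$ are continuous preimages of $G$, hence lie in $\Gamma$; thus $(\tilde A_\alpha)_{\alpha\in{\sf WO}}$ is a uniform $\Delta$ collection. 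Strict closure under $\om_1$-pwo coproduct then gives $\join_{\alpha\in{\sf WO}}\tilde A_\alpha\in\Delta$, and since this set is a coordinate permutation (a homeomorphic copy) of $\mathbb{U}$, closure of $\Delta$ under continuous preimages yields $\mathbb{U}\in\Delta$. In particular both $\mathbb{U}\in\Gamma$ and $\neg\mathbb{U}\in\Gamma$.

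Finally I would apply the good parametrization to $\mathbb{U}$ and to $\neg\mathbb{U}$ separately: this produces continuous maps $s$ and $s'$ with $\mathbb{U}_p=G_{s(p)}$ and $(\neg\mathbb{U})_p=G_{s'(p)}$, so that $s(p)$ is a $\Gamma$-code of $\join_{\alpha\in{\sf WO}}A^p_\alpha$ and $s'(p)$ a $\Gamma$-code of its complement. Hence $\Phi(p)=(s(p),s'(p))$ is continuous and returns a $\Delta$-code of the coproduct, which is the desired conclusion.

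The main obstacle is the correct setup rather than any hard estimate: one must pin down the coding conventions so that the reindexed family $(\tilde A_\alpha)_{\alpha\in{\sf WO}}$ is genuinely a \emph{uniform} $\Delta$ collection in the enlarged product space, witnessed by the single pair $(\tilde B,\tilde C)\in\Gamma$, and not merely pointwise $\Delta$ for each fixed $p$; this is exactly what lets the pointwise closure hypothesis be applied \emph{once} to yield $\mathbb{U}\in\Delta$. The other essential ingredient is the good parametrization of the abstract pointclass $\Gamma$, and this is the precise point where {\sf AD} enters: it is Wadge's lemma that secures a universal set for the nonselfdual $\Gamma$ together with the continuous s-m-n reduction. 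This is the content taken from Becker \cite{Bec84}.
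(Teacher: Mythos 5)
Your argument breaks at the reindexing step, and the break is fatal rather than reparable. For $(\tilde A_\alpha)_{\alpha\in{\sf WO}}$ to be a uniform $\Delta$ collection witnessed by $(\tilde B,\tilde C)$, Definition \ref{def:uniform-delta} demands that for \emph{every} $\alpha\in{\sf WO}$ and \emph{every} point $(p,x)$ of the fibre space one has $(\alpha,(p,x))\in\tilde B\iff(\alpha,(p,x))\notin\tilde C$, i.e.\ $(\alpha,x)\in B^p\iff(\alpha,x)\notin C^p$. That biconditional holds only for those $p$ which genuinely are uniform $\Delta$-codes; for an arbitrary pair $p=(b,c)$ of $\Gamma$-indices the sets $G_b$ and $G_c$ need not be complementary on ${\sf WO}$-sections, so $(\tilde B,\tilde C)$ is not a witnessing pair and the strict-closure hypothesis cannot be applied to it. Worse, the intermediate claim $\mathbb{U}\in\Delta$ is not merely unproven but provably false: fix $\alpha_0\in{\sf WO}$; every $\Gamma$ set $A\subseteq\omega^\omega$ equals the section $\{x:(p,\alpha_0,x)\in\mathbb{U}\}$ for a suitable code $p$ (take $b$ with $G_b=\omega^\omega\times A$), so $\mathbb{U}$ is essentially a universal $\Gamma$ set, and the standard diagonalization (if $\neg\mathbb{U}\in\Gamma$, then $D=\{x:(\alpha_0,x)\notin G_x\}\in\Gamma$, hence $\omega^\omega\times D=G_{b^\ast}$ for some $b^\ast$, giving $b^\ast\in D\iff b^\ast\notin D$) shows $\mathbb{U}\notin\neg\Gamma$. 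No adjustment of coding conventions avoids this: any ``master set'' whose sections exhaust the coproducts of all coded families is universal-like, hence lies outside $\Delta$.

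This also clarifies the relation to the paper: the paper gives no proof of this Fact at all --- it is quoted from Becker \cite{Bec84}, whose general theorem is that under {\sf AD} every closure property of $\Delta$ for a nonselfdual pointclass holds \emph{uniformly}. Becker's proof uses determinacy in an essential way (roughly, a game in which player I plays a purported uniform $\Delta$-code and player II plays a purported $\Delta$-code of the coproduct; one shows player I cannot have a winning strategy, so determinacy hands player II one, and that strategy \emph{is} the desired continuous map on codes). Your proposal invokes {\sf AD} only to obtain a universal set with the s-m-n property and then tries to manufacture the uniformity by pure coding; the obstruction above is exactly why that cannot succeed, and why the passage from pointwise closure (``for each genuine code there exists a correct output code'') to continuous-on-codes closure requires the game argument. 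Your final s-m-n step would be fine if $\mathbb{U}\in\Delta$ were available, but it never is.
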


The Wedge reducibillity is too fine-grained to handle this level of pointclasses, and for this reason we first deal with a coarser reducibility.
For $A,B\subseteq\om^\om$, we say that $A$ is {\em Borel-Wadge reducible to $B$} ({\em written $A\leq_{\sf BW}B$}) if there exists a Borel function $\theta\colon\om^\om\to\om^\om$ such that, for any $x\in\om^\om$, $x\in A$ if and only if $\theta(x)\in B$.
The Borel-Wadge degrees are semi-well-ordered, and therefore, one can assign a Borel-Wadge rank $|A|_{\sf BW}$ to each set $A\subseteq\om^\om$.
A {\em Borel-Wadge pointclass} is a class of subsets of $\om^\om$ downward closed under Borel-Wadge reducibility, i.e., $A\in\Gamma$ and $B\leq_{\sf BW}A$ implies $B\in\Gamma$.
For basic information on Borel-Wadge reducibility, see Andretta-Martin \cite{AnMa03}.

Now we give a key result connecting the class $D_\om^\ast(\tpbf{\Pi}^1_1)$ and the $\om_1$-pwo coproduct.

\begin{prop}[{\sf AD}]\label{prop:omf-closed-coproduct}
$D_\om^\ast(\tpbf{\Pi}^1_1)$ is a minimal nonselfdual Borel-Wadge pointclass which is strictly closed under $\om_1$-pwo coproduct.
\end{prop}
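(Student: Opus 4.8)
The plan is to establish the two separate assertions packaged in the word ``minimal''. Since the Borel--Wadge degrees are semi-well-ordered under {\sf AD}, it suffices to show (i) that $D_\om^\ast(\tpbf{\Pi}^1_1)$ is itself a nonselfdual Borel--Wadge pointclass which is strictly closed under $\om_1$-pwo coproduct, and (ii) that no nonselfdual Borel--Wadge pointclass strictly contained in $D_\om^\ast(\tpbf{\Pi}^1_1)$ enjoys this closure property. Note at the outset that strict closure under $\om_1$-pwo coproduct is a property of the ambiguous part $\Delta$ alone, so $\neg D_\om^\ast(\tpbf{\Pi}^1_1)$ is closed as well; this is why the correct formulation is minimality (nothing closed lies strictly below) rather than a least element.

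For (i), the class $D_\om^\ast(\tpbf{\Pi}^1_1)$ is a Borel--Wadge pointclass because a Borel preimage of a $\tpbf{\Pi}^1_1$ set is again $\tpbf{\Pi}^1_1$ and the operator $\diff^\ast$ commutes with preimages while preserving the decreasing condition. It is nonselfdual because the decreasing difference hierarchy over $\tpbf{\Pi}^1_1$ does not collapse, so $D_\om^\ast(\tpbf{\Pi}^1_1)$ strictly extends its ambiguous part $\Delta(D_\om^\ast(\tpbf{\Pi}^1_1))$; under the semi-wellordering this forces nonselfduality. Finally, strict closure under $\om_1$-pwo coproduct is exactly Observation \ref{obs:closed-pwo-coprod}.

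For (ii), I would prove the key containment: for every nonselfdual Borel--Wadge pointclass $\Gamma$ strictly closed under $\om_1$-pwo coproduct one has $\Delta(D_\om^\ast(\tpbf{\Pi}^1_1))\subseteq\Delta(\Gamma)$. Granting this, if such a $\Gamma$ were strictly contained in $D_\om^\ast(\tpbf{\Pi}^1_1)$ then $\Delta(\Gamma)\subseteq\Delta(D_\om^\ast(\tpbf{\Pi}^1_1))$ as well, so the two ambiguous parts coincide, forcing $\Gamma\in\{D_\om^\ast(\tpbf{\Pi}^1_1),\neg D_\om^\ast(\tpbf{\Pi}^1_1)\}$ and hence $\Gamma=D_\om^\ast(\tpbf{\Pi}^1_1)$, a contradiction; this yields minimality. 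To prove the containment, fix $A\in\Delta(D_\om^\ast(\tpbf{\Pi}^1_1))$ and use the earlier characterization to write $A=\diff^\ast_{n<\om}P_n$ with $(P_n)$ a decreasing sequence of $\tpbf{\Pi}^1_1$ sets satisfying $\bigcap_nP_n=\emptyset$. Passing to the tree system of Definition \ref{def:tree-system}, the tree $T_P$ is well-founded, and by Lemma \ref{lem:from-tree-to-diff} the continuous map $x\mapsto(\theta_\ell(x))_{\ell<\om}$ Borel--Wadge reduces $A$ to the set defined by $f_{\langle\rangle}$. I would then show, by transfinite induction on the well-founded rank of a node $\sigma\in T_P$, that $f_\sigma$ defines a set in $\Delta(\Gamma)$: a leaf yields a constant function, hence $\emptyset$ or $\om^\om$, both in $\Delta(\Gamma)$ since $\Gamma$ is nonselfdual; an internal node labeled $i$ yields $f_\sigma=i\ast\join_{\alpha\in{\sf WO}}f_{\sigma\alpha}$, so the induction hypothesis together with strict closure (and the remark following Definition \ref{def:uniform-delta} to absorb the value $i$) places $f_\sigma$ in $\Delta(\Gamma)$. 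Evaluating at the root gives $f_{\langle\rangle}\in\Delta(\Gamma)$, whence $A\in\Delta(\Gamma)$ by downward closure.

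The main obstacle is that strict closure applies only to \emph{uniform} $\Delta$ collections, so at an internal node it is not enough to know that each $f_{\sigma\alpha}$ lies in $\Delta(\Gamma)$: the family $(f_{\sigma\alpha})_{\alpha\in{\sf WO}}$ must carry $\Delta(\Gamma)$-codes depending continuously on $\alpha$. I would therefore strengthen the induction hypothesis to produce, at each node, a continuous assignment of uniform $\Delta(\Gamma)$-codes to the functions attached to its subtree, using Becker's uniform closure (Fact \ref{lem:Becker-uniform}) to carry out each coproduct step uniformly and threading the codes through the recursion by effective transfinite recursion along the Borel well-founded tree $T_P$. Checking that this recursion is well defined and that the codes genuinely assemble continuously across the uncountable branching and at limit ranks is the delicate point; once it is in place, the induction closes and the containment, and with it minimality, follows.
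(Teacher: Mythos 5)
Your outline reproduces the paper's architecture: closure is Observation \ref{obs:closed-pwo-coprod}, minimality reduces to showing $\Delta(D_\om^\ast(\tpbf{\Pi}^1_1))\subseteq\Delta(\Gamma)$ for every nonselfdual Borel--Wadge pointclass $\Gamma$ strictly closed under $\om_1$-pwo coproduct, and that containment is attacked through the tree system of Definition \ref{def:tree-system}, the functions $f_\sigma$, Lemma \ref{lem:from-tree-to-diff}, and Becker's uniformization (Fact \ref{lem:Becker-uniform}). You also locate the crux correctly: strict closure applies only to \emph{uniform} $\Delta$ collections, so knowing $f_{\sigma\alpha}\in\Delta(\Gamma)$ pointwise does not let the induction close. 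But the step you leave unverified is exactly the proof, and the mechanism you propose for it --- ``a continuous assignment of uniform $\Delta(\Gamma)$-codes'' threaded by effective transfinite recursion --- cannot work as stated, so this is a genuine gap rather than a routine detail.

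The obstruction is that any code assignment $h$ must branch on whether a node $\sigma$ extends a leaf of $T_P$, i.e., on emptiness of the sets $Q_{\sigma\alpha}$; this is a genuinely Borel, non-open condition, so $h$ cannot be continuous, and no strengthening of the induction hypothesis will make it so. Moreover, continuity is more than Definition \ref{def:uniform-delta} requires: a uniform $\Delta$ collection needs two sets $B,C\in\Gamma$ witnessing membership uniformly, not a continuous map $\alpha\mapsto\mbox{code}$. The paper's resolution is different and is where the real work lies: (a) everything is first placed inside $\C=\om\times 2^\om$, so that $Q_\sigma$ is compact uniformly in $\sigma$ and leaf-detection becomes a partial \emph{stable Baire-one} function, a class of functions for which a recursion theorem is available; (b) $h$ is then defined by the recursion theorem, hence comes equipped with its own code; (c) the uniform $\Delta$-code at $\sigma$ is extracted not from continuity but from Borel closure: the sets $G^i_\sigma=\{(\alpha,x):\sigma\alpha\in{\rm dom}(h)\;\&\;(\pi_ih(\sigma\alpha),x)\in G\}$, with $G$ a $\Gamma$-universal set, are Borel substitutions into $G$ (since $h$ is Borel-measurable), hence lie in $\Gamma$ precisely \emph{because $\Gamma$ is a Borel--Wadge pointclass}. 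Note that your argument never uses the Borel--Wadge hypothesis at all --- a warning sign, since if continuous code assignments could be arranged, the proposition would hold for arbitrary Wadge pointclasses, which is not what is claimed. To complete your proof, replace the continuity demand by a $\tpbf{\Pi}^1_1$-measurable/stable Baire-one assignment obtained from the recursion theorem, and invoke closure of $\Gamma$ under Borel preimages to convert the resulting Borel-in-$\alpha$ family of codes into the pair $B,C$ required by Definition \ref{def:uniform-delta}; the remaining parts of your argument (the closure half, nonselfduality, and the derivation of minimality from the containment of ambiguous classes) are sound.
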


\begin{proof}
By Observation \ref{obs:closed-pwo-coprod}, $D_\om^\ast(\tpbf{\Pi}^1_1)$ is strictly closed under $\om_1$-pwo coproduct.
Thus, we only need to show the minimality.
Assume that $\Gamma$ is strictly closed under $\om_1$-pwo coproduct.
It suffices to show that $\Delta(D_\om^\ast(\tpbf{\Pi}^1_1))\subseteq\Delta$.
As in Definition \ref{def:tree-system}, any $\diff^\ast_{n<\om}P_n\in D_\om^\ast(\tpbf{\Pi}^1_1)$ can be represented as a system on a labeled ${\sf WO}$-branching tree $T_P$, where $P=(P_n)_{n<\om}$.
Then, assign a function $f_\sigma\colon\om^\om\to 2$ to each node $\sigma\in T_P$ as above, and define $Z_\sigma=f_\sigma^{-1}\{1\}$.
To be precise, if $\rho$ is a leaf then $Z_\rho$ is either $\emptyset$ or $\om^\om$ depending on the label of $\rho$, and if $\sigma\in T_P$ is not a leaf then $Z_\sigma=i\ast\bigsqcup_{\alpha\in{\sf WO}}Z_{\sigma\alpha}$, where $i$ is the label of $\sigma$.

\begin{claim}
$Z_{\sigma}\in\Delta$ for any $\sigma\in T_P$.
\end{claim}

\begin{proof}
By Fact \ref{lem:Becker-uniform}, there exists a continuous function which, given a uniform $\Delta$-code of $(A_\alpha)_{\alpha\in{\sf WO}}$, returns a $\Delta$-code of $i\ast\bigsqcup_{\alpha\in{\sf WO}}A_\alpha$.
We define a partial function $h\colon{\sf WO}^{<\om}\to\om^\om$ such that $h(\sigma)$ is a $\Delta$-code of $Z_\sigma$.
The recursion theorem allows us to use a self-referential definition such as ``let $h(\sigma)$ be a $\Delta$-code of the $\om_1$-pwo coproduct of the $\Delta$-sets $(Z_{\sigma\alpha})_{\alpha\in{\sf WO}}$ coded by $(h(\sigma\alpha))_{\alpha\in{\sf WO}}$.''

To discuss the complexity of $h$, we give the details of the above argument:
Given $\sigma\in{\sf WO}^{<\om}$, first check whether $\sigma$ extends a leaf of $T_P$ or not.
This is a Borel property, so it is doable by a $\tpbf{\Pi}^1_1$-measurable way, and the recursion theorem holds for $\tpbf{\Pi}^1_1$, cf.~Moschovakis \cite[Theorem 7A.2]{mos07}.
If $\sigma$ extends a leaf $\rho$, then $h(\sigma)$ is a $\Delta$-code of either $\emptyset$ or $\om^\om$, depending on the length of the leaf $\rho$.
If $\sigma$ does not extend a leaf, calculate a $\tpbf{\Pi}^1_1$-code of $\alpha\mapsto h(\sigma\alpha)$.
Then, by applying Lemma \ref{lem:Becker-uniform} to this code, we hope to obtain the $\Delta$-code $c$ of of the $\om_1$-pwo coproduct of the $\Delta$ sets coded by $(h(\sigma\alpha))_{\alpha\in{\sf WO}}$, and define $h(\sigma)=c$.
However, the problem is that since $h$ is $\tpbf{\Pi}^1_1$-measurable, it is not immediately guaranteed that $(Z_{\sigma\alpha})_{\alpha\in{\sf WO}}$ is a uniform $\Delta$ collection.

In order to overcome this difficulty, let us notice that $Q_\sigma:=\bigcap_{n<|\sigma|}P_n[\sigma(n)]$ is compact uniformly in $\sigma\in{\sf WO}^{<\om}$ (even in $\sigma\in(\om^\om)^{<\om}$).
In other words, we have a continuous function which, given $\sigma$, returns a $\tpbf{\Pi}^0_1$-code of $Q_\sigma$.
Hence, one can decide whether $\sigma$ extends a leaf by a partial stable Baire-one function $\psi$, where a function $f$ is stable Baire-one if there exists a partial continuous function $\tilde{f}$ such that for any $x\in{\rm dom}(f)$ we have $f(x)=\tilde{f}(n,x)$ for all but finitely many $n$.
In particular, such an $f$ is Baire-one, and therefore, the domain of $f$ can be extended to a Borel set.
The recursion theorem for partial stable Baire-one functions follows from the classical recursion theorem applied to the partial continuous function $\tilde{f}$.

Now, the definition of $h$ is given as follows:
If $\psi(\sigma)=1$ (i.e., $\sigma$ extends a leaf $\rho$), then $h(\sigma)$ is a code of $Z_\sigma$, which is either $\emptyset$ or $\om^\om$, depending on the length of the leaf $\rho$.
Otherwise, if $\sigma\alpha\in{\rm dom}(h)$ and $h(\sigma\alpha)$ is a $\Delta$-code of $Z_{\sigma\alpha}$, for a fixed $\Gamma$-universal set $G$, we have
\[x\in Z_{\sigma\alpha}\iff(\pi_0h(\sigma\alpha),x)\not\in G\iff(\pi_1h(\sigma\alpha),x)\in G.\]

Since $\Gamma$ is a Borel-Wadge pointclass, we have 
\[G^i_\sigma:=\{(\alpha,x):\sigma\alpha\in{\rm dom}(h)\; \&\;(\pi_ih(\sigma\alpha),x)\in G\}\in\Gamma.\]

Moreover, a $\Gamma$-code $c^i_\sigma$ of $G^i_\sigma$ can be uniformly obtained from $\sigma$ and a code of $h$.
This ensures that, whenever $h(\sigma\alpha)$ is defined for all $\alpha\in{\sf WO}$, the collection $(Z_{\sigma\alpha})_{\alpha\in{\sf WO}}$ is uniformly $\Delta$, whose code is given by $(c^0_\sigma,c^1_\sigma)$.
Then, let $\tau_0$ be a partial continuous function obtained by Fact \ref{lem:Becker-uniform} and $\tau_1$ be its dual.
In particular, $\tau_i(c^0_\sigma,c^1_\sigma)$ is a code of $i\ast\bigsqcup_{\alpha\in{\sf WO}}Z_{\sigma\alpha}$.
Then, $h(\sigma)$ is defined as $\tau_i(c^0_\sigma,c^1_\sigma)$, where $i$ is the label of $\sigma$.
If $h(\sigma\alpha)$ is defined as a $\Delta$-code for all $\alpha\in{\sf WO}$, then $h(\sigma)$ is also defined, and gives a code of $Z_\sigma=i\ast\bigsqcup_{\alpha\in{\sf WO}}Z_{\sigma\alpha}$.

The recursion theorem ensures that $h$ is well-defined, and by transfinite recursion, we conclude that $h(\sigma)$ is a $\Delta$-code of $Z_{\sigma}$ for any $\sigma\in T_P$.
\end{proof}

It remains to show that $\diff^\ast_nP_n\in\Delta$.
By Lemma \ref{lem:from-tree-to-diff}, given $x$, we have
\[x\in \diffd_{n<\om}P_n\iff (\theta_0(x),\theta_1(x),\theta_2(x),\dots)\in Z_{\langle\rangle}.\]

Consequently, $\diff^\ast_nP_n\leq_{\sf W}Z_{\langle\rangle}$ via $(\theta_0,\theta_1,\theta_2,\dots)$, and thus $\diff^\ast_nP_n\in\Delta$ by the above claim.
\end{proof}


\subsection{Lower bound}
%
%

A lower bound of the Wadge rank of $\Delta(D^\ast_\om(\tpbf{\Pi}^1_1))$ can be given by an argument explained in Steel \cite[Theorem 1.2]{St81}; see also Fournier \cite[Proposition 5.10]{FoPhD}.

\begin{lemma}[{\sf AD}]\label{lem:steel-fournier-game}
Assume that $\Gamma$ is strictly closed under $\om_1$-pwo coproduct.
Then, the cofinality of the Wadge rank of $\Gamma$ is at least $\om_2$.
\end{lemma}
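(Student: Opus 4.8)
The plan is to argue by contradiction, using only the single fact that the $\om_1$-pwo coproduct keeps the supremum of Wadge ranks inside $\Delta$. Write $\lambda$ for the Wadge rank of $\Gamma$; since $\Gamma$ is nonselfdual, every set in $\Delta$ has Wadge rank strictly below $\lambda$. Suppose toward a contradiction that $\mathrm{cf}(\lambda)<\om_2$, i.e.\ $\mathrm{cf}(\lambda)\leq\om_1$. The easy half is the following observation, which is just the coproduct property read off ranks: if $(A_\alpha)_{\alpha\in{\sf WO}}$ is any uniform $\Delta$ collection (Definition \ref{def:uniform-delta}), then for each fixed $\alpha_0\in{\sf WO}$ the map $x\mapsto(\alpha_0,x)$ continuously reduces $A_{\alpha_0}$ to the coproduct, so $A_{\alpha_0}\leq_{\sf W}\bigsqcup_{\alpha\in{\sf WO}}A_\alpha$. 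By strict closure under $\om_1$-pwo coproduct the coproduct lies in $\Delta$, hence has Wadge rank $<\lambda$, and therefore
\[
\sup_{\alpha\in{\sf WO}}|A_\alpha|_{\sf W}\ \leq\ \Big|\bigsqcup_{\alpha\in{\sf WO}}A_\alpha\Big|_{\sf W}\ <\ \lambda .
\]
Thus no uniform $\Delta$ collection indexed by ${\sf WO}$ can have Wadge ranks cofinal in $\lambda$.

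The heart of the proof is to manufacture, from the assumption $\mathrm{cf}(\lambda)\leq\om_1$, a uniform $\Delta$ collection whose sections \emph{do} have ranks cofinal in $\lambda$, contradicting the display above. Starting from a cofinal map $f\colon\om_1\to\lambda$ (which exists whenever $\mathrm{cf}(\lambda)\leq\om_1$), one chooses for each $\xi<\om_1$ a $\Delta$ set $D_\xi$ with $f(\xi)\leq|D_\xi|_{\sf W}<\lambda$, so that $\sup_{\xi<\om_1}|D_\xi|_{\sf W}=\lambda$; transporting along the order-type prewellordering of ${\sf WO}$ one sets $A_\alpha=D_{|\alpha|}$, so that $\{|A_\alpha|_{\sf W}:\alpha\in{\sf WO}\}$ is cofinal in $\lambda$. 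The genuine difficulty --- and the step I expect to be the main obstacle --- is to carry this out \emph{uniformly}, that is, to produce a single pair $(B,C)\in\Gamma$ witnessing that $(A_\alpha)_{\alpha\in{\sf WO}}$ is a uniform $\Delta$ collection, rather than merely an arbitrary $\om_1$-indexed list of $\Delta$ sets. Here I would appeal to the definable structure of the Wadge hierarchy under {\sf AD}: using a $\Gamma$-universal set to parametrize the $\Delta$ sets together with the (regular) Wadge-rank norm, one selects in an ${\sf WO}$-measurable fashion in $\alpha$ a code of a $\Delta$ set of rank at least $f(|\alpha|)$, and then packages the construction into a single $\Gamma$-definition by Becker's uniform closure (Fact \ref{lem:Becker-uniform}) and the same recursion-theoretic bookkeeping already used in the proof of Proposition \ref{prop:omf-closed-coproduct}.

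With the uniform cofinal collection $(A_\alpha)_{\alpha\in{\sf WO}}$ in hand, the two halves collide: the first paragraph forces $\sup_{\alpha\in{\sf WO}}|A_\alpha|_{\sf W}<\lambda$, while the construction guarantees $\sup_{\alpha\in{\sf WO}}|A_\alpha|_{\sf W}=\lambda$. This contradiction yields $\mathrm{cf}(\lambda)\geq\om_2$. The whole argument is driven by the single closure hypothesis: the coproduct is precisely what allows $\om_1$-many $\Delta$ sets to be amalgamated without leaving $\Delta$, so the only way cofinality $\om_1$ could occur would be for some cofinal $\om_1$-family to resist uniformization --- and it is exactly {\sf AD}, through the definability of the Wadge order and Becker's theorem, that excludes this possibility.
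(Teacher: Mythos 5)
Your overall skeleton agrees with the paper's: both arguments derive a contradiction by producing, from a short cofinal sequence, a \emph{uniform} $\Delta$ collection $(A_\alpha)_{\alpha\in{\sf WO}}$ whose sections dominate it, and then observing that the $\om_1$-pwo coproduct is a single $\Delta$ set (hence of rank $<\lambda$) lying above all sections; your first and third paragraphs are correct. The gap is exactly at the step you yourself flag as ``the main obstacle'': passing from the pointwise choice of sets $D_\xi$ of rank $\geq f(\xi)$ to a single pair $(B,C)\in\Gamma$ witnessing uniformity. The tools you invoke do not do this job. Fact \ref{lem:Becker-uniform} takes a code of an \emph{already uniform} $\Delta$ collection and returns a $\Delta$-code of its coproduct; it cannot manufacture the uniform collection in the first place. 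Nor is there any definable-selection principle available for the relation ``$c,d$ are $\Gamma$-codes with $G_c=\neg G_d$ and $|G_c|_{\sf W}\geq f(|\alpha|)$'': the map $f$ is an arbitrary cofinal function with no definability whatsoever, the condition $G_c=\neg G_d$ quantifies over all reals, and under {\sf AD} full uniformization fails outright, so ``selecting in a ${\sf WO}$-measurable fashion'' is precisely what must be proved, not assumed. The recursion-theoretic bookkeeping of Proposition \ref{prop:omf-closed-coproduct} is likewise irrelevant here: it performs recursion along a well-founded tree of an already-given system, not selection of codes of high-rank sets.

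What fills this gap in the paper is a Solovay game, and this is the one idea your proposal is missing. Given any $\psi\colon\om_1\to\Delta$, Player I plays $\alpha\in\om^\om$ and Player II plays a pair of $\Gamma$-codes of sets $D$ and $E$; Player II wins if, whenever $\alpha\in{\sf WO}$, $D=\neg E$ and $|D|_{\sf W}\geq|\psi(|\alpha|)|_{\sf W}$. Player I has no winning strategy: a strategy for I is continuous, so by $\Sigma^1_1$-boundedness the ordinals it can produce are bounded by some $\xi<\om_1$, and closure of $\Delta$ under countable joins lets II defeat it with a single reply of rank at least $\sup_{\alpha<\xi}|\psi(\alpha)|_{\sf W}$. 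By {\sf AD}, Player II therefore has a winning strategy $\tau$, and $\tau$ \emph{itself} is the uniformization: setting $A_\alpha:=G_{\pi_0\tau(\alpha)}=\neg G_{\pi_1\tau(\alpha)}$ for a universal $\Gamma$ set $G$, the continuity of $\tau$ composed with $G$ yields the required pair $(B,C)$, so $(A_\alpha)_{\alpha\in{\sf WO}}$ is uniformly $\Delta$ and dominates $\psi$. In short, determinacy enters not through ``definability of the Wadge order'' but through the strategy-versus-boundedness dichotomy of this game; without that argument your second paragraph states the problem rather than solving it.
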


\begin{proof}
Let $\psi\colon\om_1\to\Delta$ be any function. 
Consider the following Solovay game:
Player I chooses a large countable ordinal $\alpha$ and Player II chooses a $\Delta$ set whose Wadge rank is greater than $\psi(\alpha)$.
More precisely, Player I chooses $\alpha\in\om^\om$ and then Player II chooses $\Gamma$-codes of sets $D$ and $E$.
Player II wins if, whenever $\alpha\not\in{\sf WO}$, $D=\neg E$ and the Wadge rank of $D$ is greater than or equal to $\psi(|\alpha|)$.

Player I does not have a winning strategy $\tau$.
Otherwise, by $\Sigma^1_1$-bounding, there is an upper bound $\xi$ of ordinals in the image of $\tau$.
Then, $(\psi(\alpha))_{\alpha<\xi}$ gives countably many $\Delta$ sets, and by the closure property of $\Delta$, one can easily obtain a $\Delta$ set whose Wadge rank is greater than or equal to $\sup_{\alpha<\xi}\psi(\alpha)$.
Hence, Player II wins.

By the axiom of determinacy ${\sf AD}$, Player II has a winning strategy $\tau$.
Let $G$ be a universal $\Gamma$ set.
Then, define 
\[A=\{(\alpha,x)\in\om^\om:x\in G_{\pi_0\tau(\alpha)}\}.\]

In other words, $A_\alpha=G_{\pi_0\tau(\alpha)}=\neg G_{\pi_1\tau(\alpha)}$.
Hence, $(A_\alpha)_{\alpha\in{\sf WO}}$ is uniformly $\Delta$.
By the closure property, $\bigsqcup_{\alpha\in{\sf WO}}A_\alpha\in\Delta$, whose Wadge rank is greater than or equal to $(\psi(\alpha))_{\alpha<\om_1}$.
Hence, $\psi$ cannot be a cofinal sequence.
\end{proof}

Under ${\sf AD}$, it is known that ${\sf cf}(\om_n)=\om_2$ whenever $2\leq n<\om$; see \cite[Corollary 28.8]{KanBook}.

\subsection{Upper bound}
By Proposition \ref{prop:omf-closed-coproduct}, $D^\ast_\om(\tpbf{\Pi}^1_1)$ is the minimal Wadge pointclass which is strictly closed under $\om_1$-pwo coproduct.
Therefore, for any $A\in\Delta(D^\ast_\om(\tpbf{\Pi}^1_1))$, the pointclass $\Gamma_A=\{B\subseteq\om^\om:B\leq_{\sf W}A\}$ is not strictly closed under $\om_1$-pwo coproduct.
In this section, we analyze the Wadge rank of such a pointclass. 

Let $(A_\alpha)_{\alpha\in{\sf WO}}$ be a uniform $\Delta$ collection.
Then, for any $\xi<\om_1$, put $A_{<\xi}:=\join_{|\alpha|<\xi}A_\alpha$, where $|\alpha|$ is the order type of $\alpha$ if $\alpha$ is well-ordered.
We say that $\Gamma$ is {\em strictly closed under $(<\om_1)$-coproduct} if, for any uniformly $\Delta$ collection $(A_\alpha)_{\alpha\in{\sf WO}}$ and any $\xi<\om_1$, we have $A_{<\xi}\in\Delta$.

\begin{lemma}\label{lem:cofinal-omega-one}
Assume that $\Gamma$ is strictly closed under $(<\om_1)$-coproduct, but not strictly closed under $\om_1$-pwo coproduct, witnessed by $(A_\alpha)_{\alpha\in{\sf WO}}$.
Then, $(A_{<\xi})_{\xi<\om_1}$ is a cofinal sequence in the Borel-Wadge degrees of $\Delta$ sets.
\end{lemma}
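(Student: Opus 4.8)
The plan is to argue by contradiction, adapting the Solovay-game technique of Lemma \ref{lem:steel-fournier-game}. Suppose $(A_{<\xi})_{\xi<\om_1}$ is \emph{not} cofinal. Since $\Gamma$ is strictly closed under $(<\om_1)$-coproduct, every $A_{<\xi}$ is already a $\Delta$ set, so non-cofinality produces a $\Delta$ set $E$ with $E\not\leq_{\sf BW}A_{<\xi}$ for all $\xi$; by the semi-well-ordering of the Borel-Wadge degrees and Wadge's lemma this yields $A_{<\xi}\leq_{\sf BW}\neg E=:D$ for every $\xi<\om_1$, with $D\in\Delta$. Writing $A=\bigsqcup_{\alpha\in{\sf WO}}A_\alpha$, the goal is to assemble the family of reductions $A_{<\xi}\leq_{\sf BW}D$ into a single Borel reduction of $A$ into a $\Delta$ set, forcing $A\in\Delta$ and contradicting that $(A_\alpha)_{\alpha\in{\sf WO}}$ witnesses the failure of strict $\om_1$-pwo closure.

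To obtain the reductions uniformly I would consider the Solovay game in which Player~I plays $\alpha\in\om^\om$ and Player~II plays a code of a Borel function $\theta_\alpha$, with II winning whenever $\alpha\in{\sf WO}$ implies $A_\alpha=\theta_\alpha^{-1}[D]$. Player~I has no winning strategy: if $\tau$ were one, every run would force $\tau*y\in{\sf WO}$, so by Spector boundedness the analytic set $\{\,|\tau*y|:y\in\om^\om\,\}$ is bounded by some $\xi^\ast<\om_1$; fixing a reduction $g$ witnessing $A_{<\xi^\ast}\leq_{\sf BW}D$ (available because $A_{<\xi^\ast}\in\Delta$ by $(<\om_1)$-closure), Player~II defeats $\tau$ by continuously outputting the code of $z\mapsto g(\alpha,z)$, since for $\alpha\in{\sf WO}$ with $|\alpha|<\xi^\ast$ we have $A_\alpha=\{z:g(\alpha,z)\in D\}$. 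By {\sf AD}, Player~II then has a winning strategy, giving a continuous map $\alpha\mapsto\theta_\alpha$ with $A_\alpha=\theta_\alpha^{-1}[D]$ for all $\alpha\in{\sf WO}$.

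I would then assemble the global map $\Theta(\alpha,x)=(\alpha,\theta_\alpha(x))$ and check that it witnesses $A\leq_{\sf BW}\widehat D$, where $\widehat D=\{(\beta,w):\beta\in{\sf WO}\wedge w\in D\}$ is the constant $\om_1$-pwo coproduct of $D$. Indeed, for $\alpha\in{\sf WO}$ we have $\Theta(\alpha,x)\in\widehat D\iff\theta_\alpha(x)\in D\iff x\in A_\alpha\iff(\alpha,x)\in A$, while for $\alpha\notin{\sf WO}$ both $(\alpha,x)\notin A$ and $\Theta(\alpha,x)\notin\widehat D$ hold automatically; retaining the first coordinate is precisely what absorbs the $\Pi^1_1$ well-foundedness guard that a reduction straight into $D$ could not handle. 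Since $\widehat D=\pi_0^{-1}[{\sf WO}]\cap\pi_1^{-1}[D]$ and $D$ is $\Pi^1_1$-hard at this level, one gets $\widehat D\in\Delta$, whence $A\leq_{\sf BW}\widehat D$ and the downward closure of $\Delta$ under $\leq_{\sf BW}$ force $A\in\Delta$, the desired contradiction.

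The main obstacle is the measurable assembly step: the $\theta_\alpha$ are a priori Borel of unbounded Baire class, so $(\alpha,x)\mapsto\theta_\alpha(x)$ need not be Borel merely from continuity of $\alpha\mapsto\theta_\alpha$. I expect this to be handled exactly as in the proof of Proposition \ref{prop:omf-closed-coproduct}: rephrase the game so that Player~II outputs $\Delta$-codes and invoke Becker's uniformization (Fact \ref{lem:Becker-uniform}) to obtain a continuous code-producing map, evaluating everything against a fixed $\Gamma$-universal set rather than composing honest Borel functions. The remaining points needing care are verifying that this yields a genuine $\Delta$-code for $\widehat D$, and that the constant coproduct $\widehat D$ really lies in $\Delta$ in the ambient $\Gamma$ (i.e.\ that $\Gamma$ is closed under intersection with $\tpbf{\Pi}^1_1$ and union with $\tpbf{\Sigma}^1_1$), which holds for the pointclasses to which this lemma is applied.
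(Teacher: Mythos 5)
Your argument funnels everything through the single reduction $A\leq_{\sf BW}\widehat D$, where $\widehat D=\pi_0^{-1}[{\sf WO}]\cap\pi_1^{-1}[D]$, and the contradiction then requires $\widehat D\in\Delta$. That step is a genuine gap, and it is not a technicality: $\widehat D$ is exactly the $\om_1$-pwo coproduct $\join_{\alpha\in{\sf WO}}D$ of the \emph{constant} uniform $\Delta$ collection, so ``$\widehat D\in\Delta$'' is an instance of the very closure property the lemma assumes to fail; the hypotheses supply no positive instance of it (strict $(<\om_1)$-closure only gives $\widehat D_{<\xi}\in\Delta$ for each $\xi$). Nor can the ${\sf WO}$ guard be removed from the target: Player II's winning condition must be conditional on $\alpha\in{\sf WO}$, since otherwise a winning strategy for Player I is no longer forced to play well-orders and your $\tpbf{\Sigma}^1_1$-boundedness refutation of I's strategies collapses; consequently II's strategy gives no control over $\theta_\alpha^{-1}[D]$ on ill-founded $\alpha$. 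Your two fallbacks do not close the gap. That ``$D$ is $\Pi^1_1$-hard'' is a non sequitur: hardness for $\tpbf{\Pi}^1_1$ does not make a pointclass closed under conjunction with $\tpbf{\Pi}^1_1$ sets. And the claim that the needed closure ``holds for the pointclasses to which this lemma is applied'' is unavailable: in the proof of Theorem \ref{thm:main-theorem} the lemma is applied to $\Gamma_A=\{B:B\leq_{\sf W}A\}$ for a \emph{hypothetical} set $A$ of Wadge rank exactly $\om_2$, and there $\widehat D\in\Delta_A$ says precisely that the conjunction of ${\sf WO}$ with $D$ has Wadge rank below $\om_2$ --- a bound of exactly the kind the whole proof by contradiction is trying to establish, so assuming it is circular. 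Relatedly, the appeal to Fact \ref{lem:Becker-uniform} is inadmissible here: its hypothesis is that $\Gamma$ \emph{is} strictly closed under $\om_1$-pwo coproduct, which is what is assumed false (in Proposition \ref{prop:omf-closed-coproduct} that hypothesis legitimately holds; in this lemma it does not). The assembly problem you flag can in fact be repaired without Becker --- II's winning strategy is continuous, so the Borel codes it plays form a $\tpbf{\Sigma}^1_1$ set, and boundedness of Borel ranks over such a set makes $\Theta$ Borel --- but that repair does nothing for the $\widehat D$ problem.

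The paper's proof is designed precisely to keep any $\tpbf{\Pi}^1_1$ guard out of the target, and it never reduces $A$ to anything. Given an arbitrary $B\in\Delta$, Wadge's lemma (using $A\notin\Delta$ and downward closure of $\Delta$) yields continuous reductions $\theta$ of $B$ to $A$ and $\eta$ of $B$ to $\neg A$. The sets ``the first coordinate of $\theta(x)$ is ill-founded'' and ``the first coordinate of $\eta(x)$ is ill-founded'' are disjoint $\tpbf{\Sigma}^1_1$ sets, so Lusin separation gives a Borel set $C$ deciding which of the two reductions to trust; $\tpbf{\Sigma}^1_1$-boundedness then produces a single $\xi<\om_1$ bounding the relevant ranks on both pieces, and the piecewise map sending $x$ to $(0,\theta(x))$ if $x\in C$ and to $(1,\eta(x))$ otherwise witnesses $B\leq_{\sf BW}A_{<\xi}\sqcup\neg A_{<\xi}$. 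In other words, ill-founded inputs are absorbed on the domain side by the Borel separator, not on the target side by a $\tpbf{\Pi}^1_1$ guard, and only the $(<\om_1)$-closure hypothesis is ever used. If you want to keep your game-theoretic route, you would need to build this two-sided (reduction-to-$A$ and reduction-to-$\neg A$) separation into the game; as written, ``$\widehat D\in\Delta$'' is an unproved assumption of exactly the type the lemma's hypotheses fail to provide.
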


\begin{proof}
Put $A=\join_{\alpha\in{\sf WO}}A_\alpha$.
Then, $A\not\in\Delta$ by our assumption.
If $B\in\Delta$, by Wadge's lemma, we have $B\leq_{\sf W}A$ via some $\theta$ and $B\leq_{\sf W}\neg A$ via some $\eta$.
Let $|x|_\theta$ be the rank of the $1$st corrdinate of $\theta(x)$.
In other words, $|x|_\theta=\alpha$ if and only if $\theta(x)\in A_\alpha$.
Define $|x|_\eta$ in the similar manner.
Then, since $(\neg{\sf WO})\times\om^\om\subseteq\neg A$ and $A\subseteq {\sf WO}\times\om^\om$, we have
\[|x|_\eta=\infty\implies x\in B\implies|x|_\theta<\infty.\]
%
%

Thus, there exists no $x$ such that both ``$|x|_\theta=\infty$'' and ``$|x|_\eta=\infty$'' hold.
Moreover, these properties are $\tpbf{\Sigma}^1_1$.
Hence, the properties ``$|x|_\theta=\infty$'' and ``$|x|_\eta=\infty$'' determine a disjoint pair of $\tpbf{\Sigma}^1_1$ sets.
Therefore, by Lusin's separation theorem, there exists a Borel set $C$ such that
\[|x|_\eta=\infty\implies x\in C\implies |x|_\theta<\infty.\]


In particular, $x\in C$ implies $|x|_\theta<\infty$ and $x\not\in C$ implies $|x|_\eta<\infty$.
%
%
%
%
Since $C$ is Borel, and $\theta$ and $\eta$ are continuous, by $\tpbf{\Sigma}^1_1$-boundedness, there exists $\xi<\om_1$ such that, for any $x\in\om^\om$, $x\in C$ implies $|x|_\theta<\xi$ (i.e., $\theta(x)\in A_{<\xi}$), and $x\not\in C$ implies $|x|_\eta<\xi$ (i.e., $\eta(x)\in A_{<\xi}$).
Now, we define a Borel reduction $\gamma$ as follows:
\[
\gamma(x)=
\begin{cases}
(0,\theta(x))&\mbox{ if }x\in C,\\
(1,\eta(x))&\mbox{ if }x\not\in C.
\end{cases}
\]

Then, we claim that $B$ is Borel-Wadge reducible to $A_{<\xi}\sqcup \neg A_{<\xi}$ via $\gamma$, where $A_{<\xi}\sqcup \neg A_{<\xi}=(\{0\}\times A_{<\xi})\cup(\{1\}\times \neg A_{<\xi})$.
Since $\theta$ witnesses $B\leq_{\sf W}A$, $x\in B$ if and only if $\theta(x)\in A$.
Hence, if $x\in C$ then $x\in B$ if and only if $\gamma(x)=(0,\theta(x))\in\{0\}\times A$, and the latter is equivalent to $\gamma(x)\in\{0\}\times A_{<\xi}$ as we must have $|x|_\theta<\xi$.
Similarly, since $\eta$ witnesses $B\leq_{\sf W}\neg A$, $x\in B$ if and only if $\eta(x)\not\in A$.
Hence, if $x\not\in C$ then $x\in B$ if and only if $\gamma(x)=(1,\eta(x))\in\{1\}\times \neg A$, and the latter is equivalent to $\gamma(x)\in\{1\}\times \neg A_{<\xi}$ as we must have $|x|_\eta<\xi$.
This verifies the claim.
\end{proof}

By combining Lemma \ref{lem:cofinal-omega-one} and Proposition \ref{prop:omf-closed-coproduct}, the desired upper bound can be almost obtained: The Borel-Wadge rank of $D_\om^\ast(\tpbf{\Pi}^1_1)$ is at most $\om_2$.

\subsection{Inside Borel-Wadge degrees}

Unfortunately, Lemma \ref{lem:cofinal-omega-one} only gives a result on Borel-Wadge degrees.
To prove Theorem \ref{thm:main-theorem}, this result has to be transformed into a result for Wadge degrees.

\begin{prop}[{\sf AD}]\label{lem:inside-Borel-Wadge-degrees}
The Wadge rank of $A$ is $\om_2$ if and only if its Borel-Wadge rank is $\om_2$.
\end{prop}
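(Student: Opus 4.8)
The plan is to reduce the statement to a comparison of the \emph{complete} degrees in the two hierarchies. Since every continuous function is Borel, $\leq_{\sf W}$ refines $\leq_{\sf BW}$, so the identity induces an order-preserving surjection of the Wadge degrees onto the Borel-Wadge degrees; in particular $|A|_{\sf BW}\le|A|_{\sf W}$ for every $A$. Write $\Gamma=D_\om^\ast(\tpbf{\Pi}^1_1)$, whose Borel-Wadge rank has already been shown to be $\om_2$, so that a set has Borel-Wadge rank $\om_2$ exactly when it is $\leq_{\sf BW}$-complete for $\Gamma$. The first observation is that $\leq_{\sf BW}$-completeness and $\leq_{\sf W}$-completeness for $\Gamma$ coincide: if $C$ is $\leq_{\sf BW}$-complete then $C\in\Gamma\setminus\neg\Gamma$ (it lies in $\Gamma$ but, having maximal Borel-Wadge rank, not in $\Delta(\Gamma)=\Gamma\cap\neg\Gamma$), and by Wadge's lemma every $\Gamma$ set is then $\leq_{\sf W}C$, so $C$ is already $\leq_{\sf W}$-complete; the converse is trivial. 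Thus the sets of Borel-Wadge rank $\om_2$ form a single Wadge degree, the $\Gamma$-complete one, and the whole proposition comes down to showing that this Wadge degree sits at Wadge rank exactly $\om_2$.

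For this I would pin the rank from both sides. The lower bound $|A|_{\sf W}\ge\om_2$ is immediate from the refinement inequality. For the upper bound it suffices to show that every $B$ that is \emph{not} $\leq_{\sf BW}$-complete (equivalently $|B|_{\sf BW}<\om_2$) already has $|B|_{\sf W}<\om_2$; for then the Wadge-complete degree, being the supremum of the degrees strictly below it, has Wadge rank $\le\om_2$. So everything reduces to the implication
\[|B|_{\sf BW}<\om_2\ \Longrightarrow\ |B|_{\sf W}<\om_2.\]
This in turn follows from a single bound on how far the two ranks can diverge: if each Borel-Wadge degree contains at most $\aleph_1$ many Wadge degrees, then for $|B|_{\sf BW}=\gamma<\om_2$ the Wadge degrees below $B$ form a union of $|\gamma|\le\aleph_1$ many such blocks, hence number at most $\aleph_1\cdot\aleph_1=\aleph_1$, giving $|B|_{\sf W}<\om_2$. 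Combined with the first paragraph this yields both directions: $|A|_{\sf W}=\om_2$ and $|A|_{\sf BW}=\om_2$ each hold exactly when $A$ is complete.

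The main obstacle is precisely this per-degree bound, that a single Borel-Wadge degree splinters into fewer than $\om_2$ many Wadge degrees. I would prove it by stratifying Borel reductions by their countable Borel rank. If $C\equiv_{\sf BW}B$, the two witnessing Borel functions have ranks $\xi,\zeta<\om_1$, and each factors as a continuous map composed with the canonical jump of that rank, so that $B\le_{\sf W}\mathrm{jump}_\xi(C)$ and $C\le_{\sf W}\mathrm{jump}_\zeta(B)$. Since $\tpbf{\Pi}^1_1$, and with it every class $D_\eta^\ast(\tpbf{\Pi}^1_1)$, is closed under Borel preimages, such a jump keeps the set inside the same $\tpbf{\Pi}^1_1$-based class and displaces its Wadge rank by at most the Wadge rank of the difference hierarchy over $\tpbf{\Pi}^1_1$, namely $\phi_{\om_1}(\om_1)$, an ordinal of cardinality $\aleph_1$ that does not depend on the base set. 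Consequently the Wadge ranks of any two Borel-Wadge equivalent sets differ by less than $\om_2$, so each Borel-Wadge degree carries at most $\aleph_1$ many Wadge degrees. The delicate point to check carefully is exactly that the rank-$\xi$ jump moves Wadge rank by a bounded, cardinality-$\aleph_1$ amount uniformly in the base set; this is where the relativized Wadge analysis of the Borel pointclasses (Andretta--Martin \cite{AnMa03}) enters, and it is the only genuinely nontrivial ingredient in the argument.
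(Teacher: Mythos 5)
There is a genuine gap, and it sits exactly where you flag ``the only genuinely nontrivial ingredient.'' Before that, your setup is already circular relative to what is available: you take as given that the Borel--Wadge rank of $\Gamma=D_\om^\ast(\tpbf{\Pi}^1_1)$ ``has already been shown to be $\om_2$.'' It has not. At this point the paper has only ``almost obtained'' the upper bound (and completing it requires Lemma \ref{lem:closed-under-Borel-coproduct}, whose proof \emph{uses} the present proposition), while a lower bound for the \emph{Borel}-Wadge rank is nowhere available (Lemma \ref{lem:steel-fournier-game} concerns Wadge cofinality only). Your step ``$C$ has maximal Borel--Wadge rank, hence $C\notin\Delta(\Gamma)$'' likewise presupposes that $\Delta(\Gamma)$ sets have Borel--Wadge rank $<\om_2$, i.e.\ part of Theorem \ref{thm:main-theorem} itself. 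Note that the proposition is a statement about an \emph{arbitrary} set $A$, and the paper proves it with no reference to $D_\om^\ast(\tpbf{\Pi}^1_1)$ at all, precisely so that it can then be fed into the proof of Theorem \ref{thm:main-theorem}.

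The deeper problem is the per-degree bound (each Borel--Wadge degree contains at most $\aleph_1$ many Wadge degrees). Your justification is that the rank-$\xi$ jump $\lambda_\xi^{-1}[\,\cdot\,]$ displaces Wadge rank by at most $\phi_{\om_1}(\om_1)$, ``uniformly in the base set.'' Closure of $\tpbf{\Pi}^1_1$ and of $D_\eta^\ast(\tpbf{\Pi}^1_1)$ under Borel preimages only says the jump of a set in such a class stays in the class; it gives no bound whatsoever on how far the Wadge rank moves. Worse, $\phi_{\om_1}(\om_1)$ is the rank of the \emph{increasing} hierarchy ${\sf Diff}(\tpbf{\Pi}^1_1)$, which by Theorem \ref{thm:solution-to-Fournier} is a proper subclass of $\Delta(D_\om^\ast(\tpbf{\Pi}^1_1))$; it cannot bound anything happening inside $\Delta(D_\om^\ast(\tpbf{\Pi}^1_1))$, where by Theorem \ref{thm:main-theorem} the Wadge ranks climb to $\om_2>\phi_{\om_1}(\om_1)$. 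In fact, for a Borel-selfdual $B$ the jumps $\lambda_\alpha^{-1}[B]$, $\alpha<\om_1$, are exactly a $\leq_{\sf W}$-cofinal family in the interval of Wadge degrees constituting $[B]_{\sf BW}$, so ``each jump displaces the rank by $<\om_2$'' is \emph{equivalent} to the per-degree bound you want; your sketch assumes the lemma rather than proving it, and for sets outside the $\tpbf{\Pi}^1_1$-based classes (which the proposition quantifies over) it offers nothing. The paper uses your correct ingredient---the factorization giving $B\leq_{\sf BW}C$ iff $B\leq_{\sf W}\lambda_\alpha^{-1}[C]$ for some $\alpha<\om_1$---but deploys it to transfer \emph{cofinality}, not cardinality: if $(A_\xi)_{\xi<\om_1}$ is Borel--Wadge cofinal below $A$, then the $\aleph_1$-sized family $(\lambda_\alpha^{-1}[A_\xi])_{\alpha,\xi<\om_1}$ is Wadge cofinal below $A$ (checking $\lambda_\alpha^{-1}[A_\xi]<_{\sf W}A$ via semi-well-ordering), so ${\sf cf}(|A|_{\sf W})\geq\om_2$ forces ${\sf cf}(|A|_{\sf BW})\geq\om_2$, and regularity of $\om_2$ under {\sf AD} finishes the argument for arbitrary $A$ with no per-degree analysis and no appeal to the rank of $D_\om^\ast(\tpbf{\Pi}^1_1)$. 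If you want to salvage your route, you would need to prove the per-degree bound by some such cofinality argument first---at which point the detour through $\Gamma$-completeness becomes unnecessary.
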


\begin{proof}
Clearly, the Wadge rank of $A$ is greater than or equal to its Borel-Wadge rank.
For the other direction, we claim that if the Wadge rank of $A$ has the cofinality at least $\om_2$, so is its Borel-Wadge rank.
This claim implies that if the Wadge rank of $A$ is $\om_2$ then its Borel-Wadge rank has to be at least $\om_2$, so it concludes the proof.

Assume that the cofinality of the Borel-Wadge rank of $A$ is at most $\om_1$.
Then, there exists a sequence $(A_\xi)_{\xi<\om_1}$ such that $A_\xi<_{\sf BW}A$ for any $\xi<\om_1$, and for any $B<_{\sf BW}A$ we have $B\leq_{\sf BW}A_\xi$ for some $\xi<\om_1$.
Now, fix a total $\tpbf{\Sigma}^0_{\alpha+1}$-measurable function $\lambda_\alpha\colon\om^\om\to\om^\om$ such that for any $\tpbf{\Sigma}^0_\alpha$-measurable function $\theta\colon\om^\om\to\om^\om$ we have $\theta=\lambda_\alpha\circ\eta$ for some continuous function $\eta\colon\om^\om\to\om^\om$.
One can easily construct such a $\lambda_\alpha$; for instance, if $G\subseteq\om^\om\times\om^2$ is a universal $\tpbf{\Sigma}^0_\alpha$ set, then define $\lambda_\alpha(z)(n)=m$ if $m$ is the least number such that $(z,n,m)\in G$; if such an $m$ does not exist, put $\lambda_\alpha(z)(n)=0$.
Note that $B\leq_{\sf BW}C$ if and only if there exists $\alpha<\om_1$ such that $B=\theta^{-1}[C]$ for some $\tpbf{\Sigma}^0_\alpha$-measurable function $\theta$.
The last condition is equivalent to that $B=\eta^{-1}[\lambda_\alpha^{-1}[C]]$ for some continuous function $\eta$.
This means that $B\leq_{\sf W}\lambda_\alpha^{-1}[C]$.
Hence, $B\leq_{\sf BW}C$ if and only if $B\leq_{\sf W}\lambda_\alpha^{-1}[C]$ for some $\alpha<\om_1$.

Put $A^\alpha_\xi=\lambda_\alpha^{-1}[A_\xi]$, and consider the sequence $(A^\alpha_\xi)_{\xi,\alpha<\om_1}$.
Note that we have $A^\alpha_\xi\leq_{\sf W}A$; otherwise, $\neg A\leq_{\sf W}A^\alpha_\xi$ since $\leq_{\sf W}$ is semi-well-ordered under {\sf AD}, and this implies $\neg A\leq_{\sf BW}A_\xi$ by the above characterization of Borel-Wadge reducibility.
Then, however, we have $\neg A\leq_{\sf BW}A_\xi<_{\sf BW}A$, which is impossible (as $\neg A\leq_{\sf BW}A$ implies $\neg A\equiv_{\sf BW}A$).
Hence, $A_\xi^\alpha\leq_{\sf W}A$ for any $\xi,\alpha<\om_1$.
Indeed, $A_\xi^\alpha<_{\sf W}A$ since $A\not\leq_{\sf BW}A_\xi$.
As $(A_\xi)_{\xi<\om_1}$ is cofinal below the Borel-Wadge degree of $A$, for any $B<_{\sf W}A$ there is $\xi<\om_1$ such that $B\leq_{\sf BW}A_\xi$, which means that $B\leq_{\sf W}A_\xi^\alpha$ for some $\alpha<\om_1$.
Hence, $(A^\alpha_\xi)_{\xi,\alpha<\om_1}$ is cofinal below the Wadge degree of $A$.
Consequently, the cofinality of the Wadge rank of $A$ is at most $\om_1$.
\end{proof}

For a set $A\subseteq\om^\om$, recall that the pointclass $\Gamma_A$ is defined as $\{B\subseteq\om^\om:B\leq_{\sf W}A\}$.

\begin{lemma}[{\sf AD}]\label{lem:closed-under-Borel-coproduct}
If the Wadge rank of $A$ is $\om_2$, then $\Gamma_A$ is strictly closed under $(<\om_1)$-coproduct.
\end{lemma}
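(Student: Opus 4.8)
The plan is to route the argument through Borel-Wadge reducibility, where intersecting or unioning with a Borel set costs nothing, and then to pull the conclusion back to ordinary Wadge reducibility by means of the semilinear ordering principle together with Proposition~\ref{lem:inside-Borel-Wadge-degrees}. Fix a uniform $\Delta$ collection $(A_\alpha)_{\alpha\in{\sf WO}}$, witnessed by sets $B,C\in\Gamma_A$ as in Definition~\ref{def:uniform-delta}, and fix $\xi<\om_1$. Putting ${\sf WO}_{<\xi}=\{\alpha\in{\sf WO}:|\alpha|<\xi\}$, the object to analyse is $A_{<\xi}=\join_{|\alpha|<\xi}A_\alpha=({\sf WO}_{<\xi}\times\om^\om)\cap B$, and the goal is to show $A_{<\xi}\in\Delta$, that is, both $A_{<\xi}\leq_{\sf W}A$ and $\neg A_{<\xi}\leq_{\sf W}A$.

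First I would collect three ingredients. (i) For each fixed $\xi<\om_1$ the set ${\sf WO}_{<\xi}$ is Borel, by the Luzin--Sierpi\'nski boundedness theorem. (ii) The set $A$ is Borel-Wadge nonselfdual: by Proposition~\ref{lem:inside-Borel-Wadge-degrees} its Borel-Wadge rank equals $\om_2$, and since $\om_2$ has uncountable cofinality (\cite[Corollary~28.8]{KanBook}), a degree at this rank is nonselfdual in the semi-well-ordered structure of Borel-Wadge degrees (\cite{AnMa03}); hence $\neg A\not\leq_{\sf BW}A$. (iii) Borel-Wadge reducibility absorbs Boolean combination with Borel sets: choosing $w_0\notin B$ and $w_1\in C$, the Borel functions that send $(\alpha,x)$ to itself when $\alpha\in{\sf WO}_{<\xi}$ and to $w_0$, respectively $w_1$, otherwise witness $A_{<\xi}\leq_{\sf BW}B$ and $\neg A_{<\xi}\leq_{\sf BW}C$. (The degenerate cases $B=\om^\om$ or $C=\emptyset$ make $A_{<\xi}$ itself Borel, hence a member of $\Delta$, so I may assume both choices are possible.) As $B,C\leq_{\sf W}A$ implies $B,C\leq_{\sf BW}A$, these combine to $A_{<\xi}\leq_{\sf BW}A$ and $\neg A_{<\xi}\leq_{\sf BW}A$; together with (ii) this forces $A_{<\xi}<_{\sf BW}A$.

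The crucial move is then to upgrade $\leq_{\sf BW}$ to $\leq_{\sf W}$. I would apply the semilinear ordering principle (Wadge's lemma under {\sf AD}, \cite{Wadge83,AnLo12}) to the pair $(A_{<\xi},A)$: either $A_{<\xi}\leq_{\sf W}A$, or $\neg A\leq_{\sf W}A_{<\xi}$. The second option yields $\neg A\leq_{\sf BW}A_{<\xi}\leq_{\sf BW}A$, contradicting ingredient (ii); so $A_{<\xi}\leq_{\sf W}A$. Applying the same principle to $(\neg A_{<\xi},A)$, the only alternative to $\neg A_{<\xi}\leq_{\sf W}A$ is $A\leq_{\sf W}A_{<\xi}$, which would give $A\leq_{\sf BW}A_{<\xi}$ and contradict the strictness $A_{<\xi}<_{\sf BW}A$. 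Hence $\neg A_{<\xi}\leq_{\sf W}A$ as well, and therefore $A_{<\xi}\in\Gamma_A\cap\neg\Gamma_A=\Delta$, which is exactly strict closure under $(<\om_1)$-coproduct.

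The step I expect to be the main obstacle is this last upgrade. A Borel reduction is strictly weaker than a continuous one, so a priori $A_{<\xi}$ might have very large Wadge rank; what saves the argument is that $A$ occupies the Borel-Wadge nonselfdual rank $\om_2$, so no set lying strictly Borel-Wadge below $A$ can be Wadge above it. Accordingly, the place where the full strength of the hypothesis ``the Wadge rank of $A$ is $\om_2$'' is spent is ingredient (ii): it is Proposition~\ref{lem:inside-Borel-Wadge-degrees} and the uncountable cofinality of $\om_2$ that guarantee Borel-Wadge nonselfduality, and hence make the semilinear ordering dichotomy collapse to the desired reductions.
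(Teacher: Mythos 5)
Your proof is correct and takes essentially the same route as the paper's: Proposition~\ref{lem:inside-Borel-Wadge-degrees} together with ${\sf cf}(\om_2)=\om_2$ gives Borel-Wadge non-self-duality of $A$, the Borelness of ${\sf WO}_{<\xi}$ gives the Borel-Wadge reductions of $A_{<\xi}$ and $\neg A_{<\xi}$ to $A$, and these are then upgraded to continuous reductions. The only difference is cosmetic: where the paper invokes Andretta--Martin's Proposition~20 (that $[A]_{\sf W}=[A]_{\sf BW}$ for Borel non-self-dual $A$) to carry out the upgrade, you spell out the underlying Wadge's-lemma dichotomy directly, which is a legitimate and self-contained rendering of the same step.
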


\begin{proof}
By Proposition \ref{lem:inside-Borel-Wadge-degrees}, if $|A|_{\sf W}=\om_2$ then $|A|_{\sf BW}=\om_2$.
In particular, $|A|_{\sf BW}$ has an uncountable cofinality.
Therefore, by Andretta-Martin \cite[Corollary 17 (a)]{AnMa03}, $A$ is Borel non-self-dual, i.e., $[A]_{\sf BW}\not=[\neg A]_{\sf BW}$.
Then, by \cite[Proposition 20]{AnMa03}, we have $[A]_{\sf W}=[A]_{\sf BW}$.

Let $(A_\alpha)_{\alpha\in{\sf WO}}$ be a uniformly $\Delta_A$ collection, where $\Delta_A=\Gamma_A\cap\neg\Gamma_A$.
Then, there exist $B,C\in\Gamma_A$ such that, whenever $\alpha\in{\sf WO}$, $x\in A_\alpha$ iff $(\alpha,x)\in B$ iff $(\alpha,x)\not\in C$.
We claim that, for any $\xi<\om_1$, $A_{<\xi}$ is Borel-Wadge reducible to $B$ and $\neg C$.
To see this, first note that ${\sf WO}_{<\xi}=\{\alpha\in{\sf WO}:|\alpha|<\xi\}$ is Borel for any $\xi<\om_1$.
Then, consider the reduction $\theta_B$ defined by $\theta_B(\alpha,x)=(\alpha,x)$ if $x\in{\sf WO}_{<\xi}$, and $\theta_B(\alpha,x)=z$ if $x\not\in{\sf WO}_{<\xi}$, where $z$ is an arbitrary element of $\om^\om$ which is not contained in $B$.
Then, $\theta_B$ witnesses that $A_{<\xi}\leq_{\sf BW}B$.
Similarly, one can construct a reduction $\theta_C$ witnessing $A_{<\xi}\leq_{\sf BW}\neg C$.
Since $B,C\leq_{\sf W}A$, we have $A_{<\xi}\leq_{\sf BW}A$ and $A_{<\xi}\leq_{\sf BW}\neg A$.

As discussed above, we have $[A]_{\sf W}=[A]_{\sf BW}\not=[\neg A]_{\sf BW}=[\neg A]_{\sf W}$.
Combining all of these, we obtain that $A_{<\xi}\leq_{\sf W}A$ and $A_{<\xi}\leq_{\sf W}\neg A$.
Therefore, $A_{<\xi}\in\Delta_A$.
This means that $\Gamma_A$ is strictly closed under $(<\om_1)$-coproduct.
\end{proof}

Indeed, the above proof shows that if the Borel Wadge rank of $A$ has an uncountable cofinality, then $\Gamma_A$ is strictly closed under $(<\om_1)$-coproduct.
Now, we give an alternative proof of the Kechris-Martin theorem saying that the Wadge rank of $D_\om^\ast(\tpbf{\Pi}^1_1)$ is $\om_2$.

\begin{proof}[Proof of Theorem \ref{thm:main-theorem}]
By Proposition \ref{prop:omf-closed-coproduct}, $D_\om^\ast(\tpbf{\Pi}^1_1)$ is strictly closed under $\om_1$-pwo coproduct.
Then, by Lemma \ref{lem:steel-fournier-game}, the order type of the Wadge degrees of $\Delta(D_\om^\ast(\tpbf{\Pi}^1_1))$ sets is at least $\om_2$.
If it is greater than $\om_2$, then there exists a $\Delta(D_\om^\ast(\tpbf{\Pi}^1_1))$ set $A\subseteq\om^\om$ whose Wadge rank is exactly $\om_2$.
By Proposition \ref{lem:inside-Borel-Wadge-degrees}, the Borel Wadge rank of $A$ is also $\om_2$.
The minimality of $D_\om^\ast(\tpbf{\Pi}^1_1)$ ensured by Proposition \ref{prop:omf-closed-coproduct} implies that $\Gamma_A$ is not strictly closed under $\om_1$-pwo coproduct.
Moreover, by Lemma \ref{lem:closed-under-Borel-coproduct}, $\Gamma_A$ is strictly closed under $(<\om_1)$-coproduct.
Therefore, by Lemma \ref{lem:cofinal-omega-one}, there exists a cofinal sequence $(A_{<\xi})_{\xi<\om_1}$ of length at most $\om_1$ in the Borel-Wadge degrees of $\Delta_A$ sets.
This implies that the cofinality of $|A|_{\sf BW}$ is at most $\om_1$.
However, since $|A|_{\sf BW}=\om_2$, it contradicts the fact that ${\sf cf}(\om_2)=\om_2$.
\end{proof}

\section{Beyond $\om_2$}

\subsection{$\Pi^1_1$-process with infinite mind-changes}

The relationships among key pointclasses mentioned in Sections \ref{sec:3} and \ref{sec:4} are summarized as in Figure \ref{fig:key-principles}.

\begin{figure}[t]
\includegraphics[width=150mm]{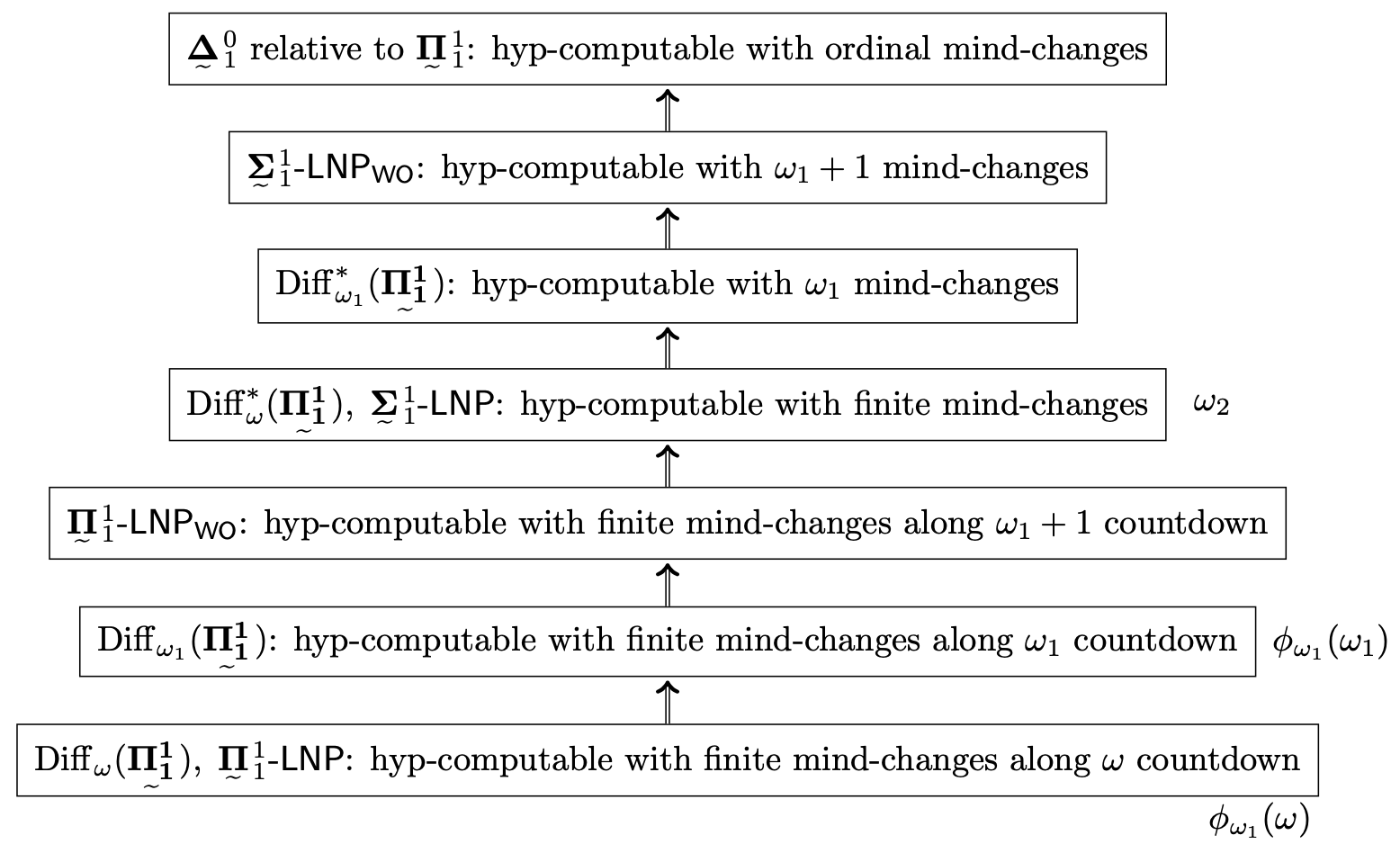}
\caption{Key principles}\label{fig:key-principles}
\end{figure}

We now move to the $(\om+1)$-st level, $(D_{\om+1}^\ast(\tpbf{\Pi}^1_1))$, of the decreasing difference hierarchy.
That is, we consider the following $\om+1$ sequence $(P_\alpha)_{\alpha<\om+1}$ of $\tpbf{\Pi}^1_1$ sets:
\[P_0\supseteq P_1\supseteq P_2\supseteq\dots\supseteq\bigcap_{n<\om}P_n\supseteq P_\om.\]

In this section, we deal with the following question:

\begin{question}
Calculate the Wadge rank of $\Delta(D^\ast_{\om+1}(\tpbf{\Pi}^1_1))$.
\end{question}

To tackle this problem, we first show, perhaps somewhat surprisingly, that any infinite level of the decreasing difference hierarchy is strictly closed under $\om_1$-pwo coproduct even if it is a successor level.

\begin{prop}\label{prop:inf-dd-pwo}
For any infinite ordinal $\eta\geq\om$, $D^\ast_\eta(\tpbf{\Pi}^1_1)$ is strictly closed under $\om_1$-pwo coproduct.
\end{prop}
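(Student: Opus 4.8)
The plan is to follow the strategy of Observation \ref{obs:closed-pwo-coprod} essentially verbatim, reducing the statement to two closure properties of $D^\ast_\eta(\tpbf{\Pi}^1_1)$. Given a uniform $\Delta$ collection $(A_\alpha)_{\alpha\in{\sf WO}}$ witnessed by a pair $(B,C)$ with $B,C\in D^\ast_\eta(\tpbf{\Pi}^1_1)$, one has $\bigsqcup_{\alpha\in{\sf WO}}A_\alpha=\pi_0^{-1}[{\sf WO}]\cap B$, while its complement equals $\pi_0^{-1}[\neg{\sf WO}]\cup C$, where $\pi_0^{-1}[{\sf WO}]$ is $\tpbf{\Pi}^1_1$ and $\pi_0^{-1}[\neg{\sf WO}]$ is $\tpbf{\Sigma}^1_1$. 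Thus it suffices to show that, for every infinite $\eta$, the class $D^\ast_\eta(\tpbf{\Pi}^1_1)$ is closed under intersection with $\tpbf{\Pi}^1_1$ sets and under union with $\tpbf{\Sigma}^1_1$ sets; then both $\bigsqcup_{\alpha}A_\alpha$ and its complement lie in $D^\ast_\eta(\tpbf{\Pi}^1_1)$, so the coproduct belongs to $\Delta(D^\ast_\eta(\tpbf{\Pi}^1_1))$.

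The first closure property holds at every level and is immediate. If $A=\diff^\ast_{\xi<\eta}B_\xi$ and $P$ is $\tpbf{\Pi}^1_1$, then $(B_\xi\cap P)_{\xi<\eta}$ is again a decreasing $\tpbf{\Pi}^1_1$ sequence, and since $\{\xi<\eta:x\in B_\xi\cap P\}$ equals $\{\xi<\eta:x\in B_\xi\}$ when $x\in P$ and is empty otherwise, one checks directly that $\diff^\ast_{\xi<\eta}(B_\xi\cap P)=A\cap P$.

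The crux is closure under union with a $\tpbf{\Sigma}^1_1$ set $S$, and this is exactly where the infiniteness of $\eta$ is used. Writing $A=\diff^\ast_{\xi<\eta}B_\xi$, I would prepend two sets and guard the tail by $\neg S$ (which is $\tpbf{\Pi}^1_1$): set $C_0=\om^\om$, $C_1=\neg S$, and $C_{2+\xi}=B_\xi\cap\neg S$ for $\xi<\eta$. This is a decreasing $\tpbf{\Pi}^1_1$ sequence of length $2+\eta$. The key arithmetic point is that $2+\eta=\eta$ for every $\eta\geq\om$, so this is a sequence of length $\eta$; moreover ${\sf par}(2+\gamma)={\sf par}(\gamma)$ for every ordinal $\gamma$, so prepending two levels preserves the parity of the maximal active index. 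A short case analysis then yields $\diff^\ast_{\zeta<2+\eta}C_\zeta=A\cup S$: if $x\in S$ then $x$ lies only in $C_0$, giving maximal index $0$ (even) and output $1$; if $x\notin S$, then the maximal active index among the $C_\zeta$ is $2+\gamma$ with $\gamma=\max\{\xi<\eta:x\in B_\xi\}$ when this exists, whose parity agrees with that of $\gamma$, so the output equals $A(x)$, while the ``no maximum'' limit case and the case $x\notin B_0$ both give output $0$, matching $A\cup S$.

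The main obstacle is precisely the bookkeeping in this last step: one must verify that the two prepended levels restore the correct parity in every case, in particular that the ``no maximum'' limit case and the $x\notin B_0$ case are handled correctly, and this is what forces $\eta$ to be infinite, since for finite $n$ the identity $2+n=n$ fails and the construction would raise the level. Granting the two closure properties, the conclusion follows exactly as in Observation \ref{obs:closed-pwo-coprod}, since then $\pi_0^{-1}[{\sf WO}]\cap B$ and $\pi_0^{-1}[\neg{\sf WO}]\cup C$ both lie in $D^\ast_\eta(\tpbf{\Pi}^1_1)$.
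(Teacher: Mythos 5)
Your proposal is correct and takes essentially the same approach as the paper: the paper likewise writes the coproduct and its complement as $\diffd_{\xi<\eta}(\pi_0^{-1}[{\sf WO}]\cap P_\xi)$ and as the difference of the sequence $\om^\om,\ \pi_0^{-1}[{\sf WO}],\ (\pi_0^{-1}[{\sf WO}]\cap\check{P}_\xi)_{\xi<\eta}$, i.e.\ it uses exactly your device of intersecting each level with a $\tpbf{\Pi}^1_1$ set and prepending two levels, relying on $2+\xi<\eta$ and parity preservation for infinite $\eta$. The only difference is organizational: you factor the construction into two stand-alone closure lemmas (intersection with $\tpbf{\Pi}^1_1$, union with $\tpbf{\Sigma}^1_1$) and then invoke the template of Observation \ref{obs:closed-pwo-coprod}, whereas the paper inlines the same verification.
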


\begin{proof}
Abbreviate $\Delta(D^\ast_\eta(\tpbf{\Pi}^1_1))$ as $\Delta$.
Let $(A_\alpha)_{\alpha\in{\sf WO}}$ be a uniform $\Delta$ collection.
Then, there exists a sequence $(P_\xi,\check{P}_\xi)_{\xi<\eta}$ of $\tpbf{\Pi}^1_1$ sets such that $A_\alpha=\diff^\ast_{\xi<\eta} P_\xi^{[\alpha]}=\neg\diff^\ast_{\xi_\eta}\check{P}_\xi^{[\alpha]}$ for any $\alpha\in{\sf WO}$, where $S^{[\alpha]}$ is the $\alpha$th section of $S$.
Then put $Q_\xi=\pi_0^{-1}[{\sf WO}]\cap P_\xi$.
Moreover, put $\check{Q}_0=\om^\om$, $\check{Q}_1=\pi_0^{-1}[{\sf WO}]$, and $\check{Q}_{2+\xi}=\pi_0^{-1}[{\sf WO}]\cap \check{P}_\xi$.
Note that $\xi<\eta$ implies $2+\xi<\eta$ since $\eta$ is infinite.
Moreover, $Q_\xi$ and $\check{Q}_\xi$ are $\tpbf{\Pi}^1_1$.

We claim that $\bigsqcup_{\alpha\in{\sf WO}}A_\alpha=\diff^\ast_{\xi<\eta}Q_\xi=\neg\diff^\ast_{\xi<\eta}\check{Q}_\xi$.
Given $(\alpha,x)$, if $\alpha\in{\sf WO}$ then one can easily see that $\max\{\xi:(\alpha,x)\in Q_\xi\}=\max\{\xi:x\in P^{[\alpha]}_\xi\}$ if it exists.
Therefore, $(\alpha,x)\in\diff^\ast_{\xi<\eta}Q_\xi$ if and only if $x\in \diff^\ast_{\xi<\eta}P^{[\alpha]}_\xi=A_\alpha$.
Moreover, if $\alpha\not\in{\sf WO}$ then $(\alpha,x)\not\in Q_0$, and therefore $(\alpha,x)\not\in\diff^\ast_{\xi<\eta}Q_\xi$.
Hence, $\bigsqcup_{\alpha\in{\sf WO}}A_\alpha=\diff^\ast_{\xi<\eta}Q_\xi$.
Again, given $(\alpha,x)$, if $\alpha\in{\sf WO}$ and $\{\xi:x\in \check{P}^{[\alpha]}_\xi\}=\emptyset$, then $x\not\in\diff^\ast_{\xi<\eta}\check{P}^{[\alpha]}_\xi=\neg A_\alpha$, and moreover $(\alpha,x)\in \check{Q}_1\setminus\check{Q}_2$; hence $(\alpha,x)\not\in\diff^\ast_{\xi<\eta}\check{Q}_\xi$.
If $\alpha\in{\sf WO}$ and $\{\xi:x\in \check{P}^{[\alpha]}_\xi\}\not=\emptyset$ then one can easily see that $\max\{\xi:(\alpha,x)\in \check{Q}_\xi\}=2+\max\{\xi:x\in \check{P}^{[\alpha]}_\xi\}$ if it exists.
In particular, both values have the same parity, and therefore, $(\alpha,x)\in\diff^\ast_{\xi<\eta}\check{Q}_\xi$ if and only if $x\in \diff^\ast_{\xi<\eta}\check{P}^{[\alpha]}_\xi=\neg A_\alpha$.
If $\alpha\not\in{\sf WO}$ then $(\alpha,x)\in\check{Q}_0\setminus\check{Q}_1$, and therefore $(\alpha,x)\in\diff^\ast_{\xi<\eta}\check{Q}_\xi$.
Hence, $\bigsqcup_{\alpha\in{\sf WO}}A_\alpha=\neg\diff^\ast_{\xi<\eta}\check{Q}_\xi$.
\end{proof}

As a consequence of Proposition \ref{prop:inf-dd-pwo}, combined with Lemma \ref{lem:steel-fournier-game}, one can see that the Wadge rank of $\Delta(D^\ast_{\om+\eta}(\tpbf{\Pi}^1_1))$ is at least $\om_2\cdot(1+\eta)$ for each $\eta<\om_1$.
As a special case, we conclude that the Wadge rank of $\Delta(D^\ast_{\om+1}(\tpbf{\Pi}^1_1))$ is at least $\om_2\cdot 2$.
In fact, however, one can observe that the Wadge rank of $\Delta(D^\ast_{\om+1}(\tpbf{\Pi}^1_1))$ is not such a small value.
For instance, one can obtain the following lower bound:

\begin{theorem}\label{thm:Wadge-rank-om1}
The Wadge rank of $\Delta(D^\ast_{\om+1}(\tpbf{\Pi}^1_1))$ is greater than $\om_2\cdot\om_1$.
\end{theorem}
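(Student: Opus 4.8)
The plan is to reduce the theorem to the construction of one cofinal family inside $\Delta(D^\ast_{\om+1}(\bfP^1_1))$ and then to finish by a single application of the coproduct closure from Proposition \ref{prop:inf-dd-pwo}. Concretely, I would aim to produce, uniformly in a $\bfP^1_1$-code for a countable ordinal $\beta$, a set $W_\beta\in\Delta(D^\ast_{\om+1}(\bfP^1_1))$ with $|W_\beta|_{\sf W}\ge\om_2\cdot(1+\beta)$, so that $(W_\beta)_{\beta<\om_1}$ forms a uniform $\Delta(D^\ast_{\om+1}(\bfP^1_1))$ collection indexed by ${\sf WO}$. Granting this, set $W=\join_{\alpha\in{\sf WO}}W_{|\alpha|}$. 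By Proposition \ref{prop:inf-dd-pwo}, $D^\ast_{\om+1}(\bfP^1_1)$ is strictly closed under $\om_1$-pwo coproduct, so $W\in\Delta(D^\ast_{\om+1}(\bfP^1_1))$. Fixing any $\alpha\in{\sf WO}$ with $|\alpha|=\beta$, the map $x\mapsto(\alpha,x)$ witnesses $W_\beta\le_{\sf W}W$; hence $|W|_{\sf W}\ge\om_2\cdot(1+\beta)$ for every $\beta<\om_1$, so $|W|_{\sf W}\ge\om_2\cdot\om_1$. As $\Delta(D^\ast_{\om+1}(\bfP^1_1))$ then contains a single set of rank at least $\om_2\cdot\om_1$, its Wadge rank exceeds $\om_2\cdot\om_1$.

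I would build $(W_\beta)$ by transfinite recursion on $\beta$, running the whole recursion through the recursion theorem exactly as in the proof of Proposition \ref{prop:omf-closed-coproduct}, so that a $\Delta$-code for $W_\beta$ is produced effectively (in a $\bfP^1_1$-measurable way) from a code for $\beta$; this is what keeps the family uniform and lets me feed it back into Proposition \ref{prop:inf-dd-pwo}. For $\beta=0$, take $W_0$ to be a $D^\ast_\om(\bfP^1_1)$-complete set: by the strict inclusion $D^\ast_\om(\bfP^1_1)\subsetneq D^\ast_{\om+1}(\bfP^1_1)$ it lies in $\Delta(D^\ast_{\om+1}(\bfP^1_1))$, and $|W_0|_{\sf W}=\om_2$ by Theorem \ref{thm:main-theorem}. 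At a limit $\beta$, set $W_\beta=\join_{|\alpha|<\beta}W_{|\alpha|}$; this is again an $\om_1$-pwo coproduct of a uniform $\Delta$ collection, hence stays in $\Delta(D^\ast_{\om+1}(\bfP^1_1))$ by Proposition \ref{prop:inf-dd-pwo}, and the reduction argument above gives $|W_\beta|_{\sf W}\ge\sup_{\gamma<\beta}\om_2\cdot(1+\gamma)=\om_2\cdot(1+\beta)$.

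The real work is the successor step: from $W_\gamma$ of rank $\om_2\cdot(1+\gamma)$ I must manufacture $W_{\gamma+1}$ of rank $\om_2\cdot(1+\gamma)+\om_2=\om_2\cdot(1+(\gamma+1))$ while remaining at the single level $\om+1$. Here is the tension that makes the theorem nontrivial: at $\Delta(D^\ast_{\om+1}(\bfP^1_1))$ the mind-change order type is bounded by $\om$, so one cannot simply run two rank-$\om_2$ computations in series. My plan is to exploit the feature that distinguishes a genuine length-$(\om+1)$ difference, namely the role of the last set $P_\om$, governed by the identity
\[
\diffd_{\xi\le\om}P_\xi=\Big(\diffd_{n<\om}P_n\ \text{off}\ \bigcap_{n}P_n\Big)\ \cup\ \Big(P_\om\ \text{on}\ \bigcap_{n}P_n\Big).
\]
The slot $P_\om$ acts precisely on the limit region $\bigcap_nP_n$, where the finite-stage part has already declined. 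I would design $(P_n)_{n<\om}$ so that off the intersection the computation is a fresh $D^\ast_\om$-complete alternation (contributing $\om_2$), and, via the recursion-theoretic self-reference, install a copy of $W_\gamma$ in the ambient coordinates so that it is recovered exactly on the intersection. The two strata are then Wadge-independent, with the outer alternation sitting above the embedded $W_\gamma$, so the ranks should add to $\om_2\cdot(1+\gamma)+\om_2$.

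I expect the main obstacle to be verifying that this grafting genuinely raises the rank by $\om_2$ rather than collapsing, together with the bookkeeping that the composite still has mind-change order type at most $\om$ on every input (so that it really lies in $\Delta(D^\ast_{\om+1}(\bfP^1_1))$ and not merely in $D^\ast_{\om+1}(\bfP^1_1)$). For the rank lower bound I would argue as in Lemma \ref{lem:steel-fournier-game}: against any purported reduction of $W_{\gamma+1}$ to a set of smaller rank, use $\tpbf{\Sigma}^1_1$-boundedness to trap the ordinal-guesses of the off-intersection alternation, and then invoke the strict closure of $\Gamma_{W_\gamma}$ under $(<\om_1)$-coproduct from Lemma \ref{lem:closed-under-Borel-coproduct} to reach a contradiction, mirroring how the $\om_2$ of the base case is produced. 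Checking that the self-referential embedding of $W_\gamma$ is $\bfP^1_1$-measurable, that the intersection region is recovered correctly, and that the two strata never interfere is the delicate part, and is where I would concentrate the effort.
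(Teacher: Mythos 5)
Your top-level architecture (build a uniform, rank-increasing family $(W_\beta)_{\beta<\om_1}$ inside $\Delta(D^\ast_{\om+1}(\tpbf{\Pi}^1_1))$, then combine via the $\om_1$-pwo coproduct using Proposition \ref{prop:inf-dd-pwo}) is sound, and your base case, limit case, and final step are fine. The fatal gap is the successor step, and it is not a bookkeeping issue: the mechanism you propose is provably impossible. If $W_{\gamma+1}=\diffd_{\xi<\om+1}P_\xi$ for a decreasing sequence $(P_\xi)_{\xi\leq\om}$ of $\tpbf{\Pi}^1_1$ sets, then on the limit region $\bigcap_{n<\om}P_n$ the set $W_{\gamma+1}$ coincides with $P_\om$: a point of the intersection lies in no finite term $P_{2k}\setminus P_{2k+1}$, so it lies in the difference if and only if it lies in $P_\om$. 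Consequently, any continuous map $e$ into $\bigcap_{n<\om}P_n$ with $e^{-1}[W_{\gamma+1}]=W_\gamma$ would give $W_\gamma=e^{-1}[P_\om]\in\tpbf{\Pi}^1_1$. But every coanalytic set lies in ${\sf Diff}(\tpbf{\Pi}^1_1)\subseteq\Delta(D^\ast_\om(\tpbf{\Pi}^1_1))$, whose Wadge order type is $\om_2$ by Theorem \ref{thm:main-theorem}, so every coanalytic set has Wadge rank strictly below $\om_2$, whereas $|W_\gamma|_{\sf W}\geq\om_2$ already for $\gamma=0$. So even the very first step $W_0\mapsto W_1$ cannot be carried out as described: the $P_\om$ slot can only hold coanalytic information, and the high-rank stratum can never be ``recovered exactly on the intersection.'' Your auxiliary claim that the two strata are ``Wadge-independent, so the ranks should add'' is likewise unsupported; gluing two sets along a non-clopen ($\tpbf{\Pi}^1_1$) region obeys no simple rank-addition law, and no rank addition of particular sets occurs anywhere in the paper's argument.

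The idea your proposal is missing is that the paper stacks $D^\ast_\om$-style processes \emph{in time}, not in space. Definition \ref{def:om-change} introduces $\om$-change $\ell\times\om$ matrices: each row is a decreasing $\om$-sequence of $\tpbf{\Pi}^1_1$ sets, condition (2) forces row $k$ to act only after rows $j<k$ have finished all their mind-changes, and the outcome of the rows above serves as the initial/reset value $v_k$ of row $k$. Lemma \ref{lem:om-change-1} shows the resulting classes $D'_{\ell\times\om}(\tpbf{\Pi}^1_1)$ stay inside $\Delta(D^\ast_{\om+1}(\tpbf{\Pi}^1_1))$ (the combined process changes its mind finitely often, or exactly $\om$ times with an immediately determined final value), and Lemmas \ref{lem:om-change-2}, \ref{lem:om-change-3}, and \ref{lem:om-change-4} show they form a non-collapsing hierarchy of Wadge pointclasses, each strictly closed under $\om_1$-pwo coproduct. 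The $+\om_2$ increment per level then comes not from adding ranks of sets but from Lemma \ref{lem:steel-fournier-game}: any pointclass strictly closed under $\om_1$-pwo coproduct has Wadge rank of cofinality at least $\om_2$, so each strictly larger such class must exceed the previous one by at least $\om_2$. Transfinite iteration (add one row at successors, take countable disjoint unions along a fundamental sequence at limits) yields $\om_1$ such layers, and the final coproduct gives the strict inequality, exactly as in your last step. If you replace your successor mechanism with this matrix construction (a fresh at-most-$\om$-mind-change process launched after the previous one halts, with the previous outcome as its default value) and replace rank addition with the cofinality argument, your outline becomes essentially the paper's proof; as written, however, its core step fails.
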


We will now prepare a proof of this theorem.
Let $(P_\alpha)_{\alpha<\om+1}$ be a decreasing $\om+1$ sequence of $\tpbf{\Pi}^1_1$ sets.
If moreover we have a $\tpbf{\Pi}^1_1$ set $\check{P}_\om$ such that $\bigcap_{n<\om}P_n=P_\om\cup\check{P}_\om$ and  $P_\om\cap\check{P}_\om=\emptyset$, we call the sequence $(P_n,\check{P}_\om)_{n\leq\om}$ {\em type $\Delta(\om+1)$.}
A decreasing $\om+1$ sequence $(P_\alpha)_{\alpha<\om+1}$ defines a set $P$ as in the usual difference hierarchy; that is, at the first $\om$ levels, a hyp-computable learner proceeds as follows:
\[0\to 1\to 0\to \dots\]

If the guess changes infinitely many often, then the guess becomes $0$.
After that, we will be able to change the guess to $1$:
\[0\to 1\to 0\to \cdots(\om\mbox{ changes})\cdots 0\to 1\]

A type $\Delta(\om+1)$ sequence $(P_\alpha,\check{P}_\om)_{\alpha<\om+1}$ defines a set $P$ in a similar manner, where if the guess changes infinitely many often (which means $x\in\bigcap_{n<\om}P_n$) then we soon decide the final value is $0$ or $1$ (which corresponds to either $x\in\check{P}_\om$ or $x\in{P}_\om$):
\[0\to 1\to 0\to \cdots(\om\mbox{ changes})\cdots i\]

\begin{lemma}\label{lemma:om-plus-one}
A set $A$ is defined by a type $\Delta(\om+1)$ sequence if and only if $A\in\Delta(D^\ast_{\om+1}(\tpbf{\Pi}^1_1))$.
\end{lemma}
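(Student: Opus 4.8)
The plan is to prove the two implications separately by passing explicitly between the ``type $\Delta(\om+1)$'' datum and a pair of decreasing $\om+1$ sequences representing $A$ and $\neg A$; the only genuine difficulty is the $\tpbf{\Pi}^1_1$-definability of the auxiliary set $\check{P}_\om$.

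For the forward direction, assume $A$ is defined by a type $\Delta(\om+1)$ sequence $(P_\alpha)_{\alpha\le\om}$ with splitting $\check{P}_\om$, so that $\bigcap_{n<\om}P_n=P_\om\sqcup\check{P}_\om$. Unwinding the informal description of the defining process, one checks directly that $A=\diff^\ast_{\alpha<\om+1}P_\alpha$: off $\bigcap_n P_n$ the guess is the ordinary finite decreasing difference of $(P_n)$, while on $\bigcap_n P_n$ the final guess is $1$ exactly on $P_\om$ (where $\max\{\alpha:x\in P_\alpha\}=\om$ is even) and $0$ on $\check{P}_\om$ (where no maximum exists). Thus $A\in D^\ast_{\om+1}(\tpbf{\Pi}^1_1)$. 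For the complement I would exhibit the decreasing $\om+1$ sequence $B_0=\om^\om$, $B_{n+1}=P_n$ for $n<\om$, and $B_\om=\check{P}_\om$, all $\tpbf{\Pi}^1_1$, and verify $\neg A=\diff^\ast_{\alpha<\om+1}B_\alpha$. The prepended level $B_0$ flips the parity so that this difference computes $\neg A$ off $\bigcap_n P_n$, while the $\om$-level $B_\om=\check{P}_\om$ records precisely the part of the infinite-change region on which $A$ takes value $0$; this is exactly where the extra datum $\check{P}_\om$ (as opposed to a bare decreasing sequence) is used. Hence $\neg A\in D^\ast_{\om+1}(\tpbf{\Pi}^1_1)$, so $A\in\Delta(D^\ast_{\om+1}(\tpbf{\Pi}^1_1))$.

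Conversely, given $A\in\Delta(D^\ast_{\om+1}(\tpbf{\Pi}^1_1))$, I would fix representations $A=\diff^\ast_{\alpha\le\om}A_\alpha$ and $\neg A=\diff^\ast_{\alpha\le\om}B_\alpha$ by decreasing $\om+1$ sequences of $\tpbf{\Pi}^1_1$ sets. Writing $I_A=\bigcap_n A_n$ and $I_B=\bigcap_n B_n$, note that on $I_A$ one has $x\in A$ iff $x\in A_\om$, and on $I_B$ one has $x\notin A$ iff $x\in B_\om$. Following the construction used in the characterization of $\Delta(D^\ast_\om(\tpbf{\Pi}^1_1))$, the plan is to put $P_n=A_n\cap B_{n+1}$, so that $\bigcap_n P_n=I_A\cap I_B$, and then to let $P_\om:=\bigcap_n P_n\cap A_\om$ and $\check{P}_\om:=\bigcap_n P_n\cap B_\om$. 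These are $\tpbf{\Pi}^1_1$, disjoint, and partition $\bigcap_n P_n$ into its ``$x\in A$'' and ``$x\notin A$'' parts. It then remains to verify that off $\bigcap_n P_n$ the finite decreasing difference of $(P_n)$ has the parity matching $A$.

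The hard part is precisely the $\tpbf{\Pi}^1_1$-definability of $\check{P}_\om$: the naive choice $\check{P}_\om=I_A\setminus A_\om$ is the intersection of a $\tpbf{\Pi}^1_1$ set with the $\tpbf{\Sigma}^1_1$ set $\neg A_\om$, hence not $\tpbf{\Pi}^1_1$ in general, which is what forces the realignment of the two tails through $P_n=A_n\cap B_{n+1}$ and the reading of the ``$x\notin A$'' part of the tail off the $\tpbf{\Pi}^1_1$ set $B_\om$. A related subtlety is a parity defect on $\neg B_0$: every $x\notin B_0$ satisfies $\neg A(x)=0$ yet lies in no $B_{n+1}$, so $P_n=A_n\cap B_{n+1}$ would wrongly output value $0$ there. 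I would remove this by first normalizing the $B$-sequence so that $B_0=\om^\om$, concretely by prepending two copies of $\om^\om$ and reindexing $B'_{2+\alpha}=B_\alpha$; this preserves both $\neg A=\diff^\ast_{\alpha\le\om}B'_\alpha$ and $B'_\om=B_\om$, and guarantees every point lies in $B'_0$. With $B'$ in place of $B$, the remaining case analysis off $\bigcap_n P_n$ (both levels finite and necessarily of opposite parity; $x\in I_B$ with the $A$-level finite; $x\in I_A$ with the $B'$-level finite; and the degenerate case $x\notin A_0$) goes through uniformly, yielding $A=\diff^\ast_{\alpha\le\om}P_\alpha$ together with the $\tpbf{\Pi}^1_1$ splitting $\bigcap_n P_n=P_\om\sqcup\check{P}_\om$, i.e.\ a type $\Delta(\om+1)$ sequence defining $A$.
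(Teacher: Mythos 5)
Your proof is correct, but it reaches the nontrivial (backward) direction by a genuinely different route than the paper. The paper argues \emph{dynamically}: fixing hyp-approximations of the two type-$(\om+1)$ witnesses (one guessing $A$, the other guessing $A$ with flipped parity, i.e.\ coming from the witness for $\neg A$), it builds a merged guessing process $D$ which at each stage emulates whichever of the two has made fewer mind-changes, and on the region where both change infinitely often it waits until one of them declares its $\om$-th change; that event decides membership in $D_\om$ versus $\check{D}_\om$. Your construction is the \emph{static} counterpart of the same merging idea, obtained by pushing the characterization of $\Delta(D^\ast_\om(\tpbf{\Pi}^1_1))$ at the opening of Section \ref{sec:4} (whose proof already uses $P_n=A_n\cap B_{n+1}$) one level up: your $P_n=A_n\cap B'_{n+1}$ says exactly that both processes have changed at least $n+1$ times, and your dichotomy that every point of $\bigcap_nA_n\cap\bigcap_nB_n$ lies in exactly one of $A_\om$, $B_\om$ is precisely the paper's observation that one of the two processes must declare an $\om$-th mind-change, on pain of the two witnesses contradicting each other. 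What your route buys: no stage machinery at all, immediate $\tpbf{\Pi}^1_1$-definability of $P_\om$ and $\check{P}_\om$, and an explicit repair (the normalization $B'_0=B'_1=\om^\om$) of a parity defect that the paper leaves implicit both here and in the Section \ref{sec:4} proposition --- indeed, without that normalization the formula $P_n=A_n\cap B_{n+1}$ already fails for $A=\om^\om$ with $\neg A=\emptyset$ witnessed by the all-empty sequence. What the paper's dynamic formulation buys in exchange is that it stays in the learner/mind-change language on which the subsequent $\om$-change-matrix arguments (Lemmas \ref{lem:om-change-1}--\ref{lem:om-change-4}) are modeled.
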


\begin{proof}
The forward direction is trivial.
For the backward direction, we have two sequences $P,Q$ of type $\om+1$ guessing $A$.
Given $x$, the first guessing process $P$ returns $0$ when the guess changes infinitely often but does not declare the $\om$th mind-change, i.e., $x\in\bigcap_n P_n$ but $x\not\in P_\om$.
Another guessing process $Q$ returns $1$ when $x\in\bigcap_n Q_n$ but $x\not\in Q_\om$.
We construct a guessing sequence $D$ of type $\Delta(\om+1)$.

At stage $\alpha$, if the number of changes of $P$ is smaller than $Q$, then the process $D$ emulates $P$; otherwise, $D$ emulates $Q$ whenever at least one of the numbers is finite.
In other words, compare $n_P=\sup\{n<\om:x\in P_n[\alpha]\}$ and $n_Q=\sup\{n<\om:x\in Q_n[\alpha]\}$.
If $t=\min\{n_P,n_Q\}$ is finite and even, returns $1$.
If $t=\min\{n_P,n_Q\}$ is finite and odd, returns $0$.
If $t$ is infinite, we have $x\in\bigcap_nP_n[\alpha]$ and $x\in\bigcap_nQ_n[\alpha]$.
In this case, either $P$ or $Q$ declare the $\om$th mind-change; otherwise, $P$'s final guess is ``$A(x)=0$'' but $Q$'s final guess is ``$A(x)=1$'', which is impossible.
Thus, wait for seeing stage $\beta\geq\alpha$ such that either $P$ or $Q$ declare the $\om$th mind-change, i.e., $x\in P_\om[\beta]$ or $x\in Q_\om[\beta]$.
In the former case, $D$'s final guess is ``$A(x)=1$'', i.e., $x\in D_\om$.
In the latter case, $D$'s final guess is ``$A(x)=0$'', i.e., $x\in \check{D}_\om$.
It is not hard to check that $D$ gives a process of type $\Delta(\om+1)$ guessing $A$.
\end{proof}

\subsection{$\om$-change matrix}

In order to prove Theorem \ref{thm:Wadge-rank-om1}, it suffices to show that there are at least $\om_1$ many classes between $D^\ast_{\om}(\tpbf{\Pi}^1_1)$ and $D^\ast_{\om+1}(\tpbf{\Pi}^1_1)$.
First, we observe that there are at least $\om$ many such classes.
A key observation is that, as we have seen above, $\Delta(D^\ast_{\om+1}(\tpbf{\Pi}^1_1))$ corresponds to hyp-computability with at most $\om$ mind-changes.
What we will show is that there is a finer hierarchy within hyp-computability with at most $\om$ mind-changes.
The following definition is hard to understand, so we give an intuitive explanation after the definition.

\begin{definition}\label{def:om-change}
A double sequence $A=(A^j_n)_{(j,n)\in \ell\times\om}$ of $\tpbf{\Pi}^1_1$ sets is called an {\em $\om$-change $\ell\times\om$ matrix} if the following holds (w.r.t.~some approximation of $(A^j_n)_{(j,n)\in \ell\times\om}$):
\begin{enumerate}
\item For any $j<\ell$, $(A^j_n)_{n\in\om}$ is a decreasing sequence.
\item For any $j<k<\ell$ and $\alpha\in{\sf WO}$, we have $A^k_0[\alpha]\cap A^j_n\subseteq A^j_n[\alpha]$ for any $n\in\om$.
\end{enumerate}

Given $c$, we define a new difference operator $c\diff'_{\ell\times\om}$, which takes an $\om$-change $\ell\times\om$ matrix $A$ and an $\ell\times\om$ matrix $a=(a^j_n)$ as input.
To define this operator, we first introduce auxiliary parameters $v_k(x)$ for each $k\leq\ell$.
Then, we first put $v_0(x)=c$.
For each $k<\ell$, define $v_{k+1}$ as follows:
\[
v_{k+1}(x)=
\begin{cases}
a_m^k&\mbox{ if }m=\max\{n<\om:x\in A_n^k\},\\
v_k(x)&\mbox{ if no such $m$ exists.}
\end{cases}
\]

Then we define $c\diff'_{\ell\times\om}[a/A]$ as follows:
\[
c\diff_{\ell\times\om}{}\!\!^\prime[a/A](x)=
\begin{cases}
v_k(x)&\mbox{ if }k=\min\left\{j<\ell:x\in\bigcap_{n<\om}A^j_n\right\},\\
v_\ell(x)&\mbox{ if no such $k$ exists.}
\end{cases}
\]

%
%
%
Let $cD'_{\ell\times\om}(\tpbf{\Pi}^1_1)$ be the class of all sets of the form $c\diff'_{\ell\times\om}[a/A]$ for some $\om$-change $\ell\times\om$ matrix $A=(A^j_n)$ of $\tpbf{\Pi}^1_1$ sets and $\ell\times\om$ matrix $a=(a^j_n)$ with $a_n^j\in\{0,1\}$.
If $c=0$ and $a_n^j={\sf par}(n)$ we simply write $D'_{\ell\times\om}(\tpbf{\Pi}^1_1)$.
\end{definition}

Let us explain an intuitive meaning of this definition.
Each row of an $\ell\times\om$ matrix acts in the same way as the class $vD^\ast_\om(\tpbf{\Pi}^1_1)$ for some $v$.
In other words, for each $k<\ell$, a hyp-computable process $\Psi_k$ is assigned to the $k$-th row, which may change the guess at most $\om$ times, and when the $\om$-th change occurs, the final guess is set to $v_k$.
The first guess is also set to $v_k$.
\begin{align*}
A^k_0\supseteq A^k_1\supseteq A^k_2\supseteq\cdots(\om\mbox{ changes})&\cdots\\
v_k\to a^k_0\to a^k_1\to a^k_2\to \cdots(\om\mbox{ changes})&\cdots v_k
\end{align*}

However, this value $v_k$ can vary if $k>0$.
A candidate $a^{k-1}_s$ for the value $v_k$ is determined by a guessing process $\Psi_{k-1}$ assigned to the row just one above it.
However, $\Psi_{k-1}$ may also change the guess $\om$ times, so the final value $v_{k-1}$ depends on a guessing process $\Psi_{k-2}$ assigned to the $(k-1)$-th row if $k-1>0$.
Continue this argument, and if this process arrives the $0$-th row, and if the $\om$-th change of $\Psi_0$ occurs,  then the final guess is set to $c$.
Note, however, that although this explanation seems to proceed in order from the bottom row, the condition (2) in Definition \ref{def:om-change}  guarantees that the process starts from the top row; that is, if we start the guessing process in some row, then the guessing processes in the rows above it has already terminated.
This assumption ensures that the guesses in each line can be integrated into a single hyp-computable process with at most $\om$ mind-changes:

\begin{lemma}\label{lem:om-change-1}
For any $\ell<\om$, $D'_{\ell\times\om}(\tpbf{\Pi}^1_1)\subseteq \Delta(D^\ast_{\om+1}(\tpbf{\Pi}^1_1))$.
\end{lemma}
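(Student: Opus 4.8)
The plan is to use Lemma \ref{lemma:om-plus-one} and reduce the statement to constructing, from a given $\om$-change $\ell\times\om$ matrix $A=(A^j_n)$ (with $c=0$ and $a^j_n={\sf par}(n)$, so that the set in question is $0\diff'_{\ell\times\om}[a/A]$), a type $\Delta(\om+1)$ sequence $(P_n,\check P_\om)_{n\le\om}$ of $\tpbf{\Pi}^1_1$ sets defining the same set. The underlying idea is to splice the $\ell$ row-processes into a single hyp-computable guessing process following the nested recipe described after Definition \ref{def:om-change}: at a stage $s$ one reads off the active row $p=p(x,s):=\max\{j<\ell:x\in A^j_0[s]\}$ together with its current depth $m=\max\{n:x\in A^p_n[s]\}$, and guesses ${\sf par}(m)$ (guessing $0=c$ before any row starts). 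Thus the spliced process reproduces, row by row, the mind-changes $v_k\to a^k_0\to a^k_1\to\cdots$ of each $\Psi_k$.

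For the finite part I would define $P_n$ to be the set of $x$ for which the spliced guess flips at least $n$ times, exactly as the sets $B_n$ were built in Proposition \ref{prop:lnpwo-approximable}; each $P_n$ is then $\tpbf{\Pi}^1_1$ by the same hyperarithmetical quantification argument (search for $x$-computable witnessing stages $s_1<\dots<s_n$), and $(P_n)_{n}$ is manifestly decreasing. The key point is that a row which converges contributes only finitely many flips, whereas a row which diverges, i.e. $x\in\bigcap_n A^j_n$, makes ${\sf par}(m)$ oscillate and hence flips infinitely often. Since $\ell<\om$, the spliced process flips infinitely often precisely when some row diverges, so that $\bigcap_n P_n=\{x:(\exists j<\ell)\,x\in\bigcap_n A^j_n\}$, which is $\tpbf{\Pi}^1_1$. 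Note that the actual order type of the flips (which could exceed $\om$ once later rows are entered) is irrelevant for membership in the type $\Delta(\om+1)$ class, since that reading only distinguishes finitely from infinitely many flips.

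It remains to fix the final value on $\bigcap_n P_n$. Writing $k=\min\{j<\ell:x\in\bigcap_n A^j_n\}$ for the first diverging row, the intended output is $v_k(x)$, which by Definition \ref{def:om-change} is the settled value ${\sf par}(m_j)$ of the last row $j<k$ that starts (or $0$ if none does). Here condition (2) of Definition \ref{def:om-change} is essential: it guarantees that as soon as row $k$ starts, equivalently long before its $\om$-th flip is completed, every true membership $x\in A^j_n$ with $j<k$ has already been confirmed, so that $v_k(x)$ is already locked in at an $x$-computable stage. Consequently, by Spector boundedness the statements ``some row diverges and the locked-in value is $1$'' and ``$\dots$ is $0$'' can each be written with a single existential quantifier ranging over $x$-computable stages, hence are $\tpbf{\Pi}^1_1$; these define the disjoint $\tpbf{\Pi}^1_1$ sets $P_\om$ and $\check P_\om$ that partition $\bigcap_n P_n$.

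Finally I would verify that the type $\Delta(\om+1)$ set determined by $(P_n,\check P_\om)$ agrees with $0\diff'_{\ell\times\om}[a/A]$: on inputs with finitely many flips the spliced process settles at the guess of the last started row, which is exactly $v_\ell(x)$, and on inputs in $\bigcap_n P_n$ the final decision returns $v_k(x)$ by construction. The main obstacle is precisely the complexity bookkeeping that makes $P_\om$ and $\check P_\om$ genuinely $\tpbf{\Pi}^1_1$ rather than mere finite Boolean combinations of $\tpbf{\Pi}^1_1$ sets; this is the one place where condition (2) is used in an indispensable way. Aligning the parity convention $a^j_n={\sf par}(n)$ of the rows with the $0\to1\to0\to\cdots$ convention of the difference hierarchy is a routine index shift that I would absorb into the definition of the $P_n$.
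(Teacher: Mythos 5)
Your proposal is correct and takes essentially the same route as the paper's proof: the paper likewise splices the $\ell$ row-processes into a single guessing process of type $\Delta(\om+1)$ (guess $v_\ell(x)[s]$ while no row has diverged, then lock in $v_k(x)$ for the least diverging row $k$, invoking Lemma~\ref{lemma:om-plus-one}), and it uses condition (2) of Definition~\ref{def:om-change} at exactly the two points you do — to keep the pre-divergence mind-changes finite and to have $v_k(x)$ confirmed by the stage at which row $k$ starts; your explicit construction of the $\tpbf{\Pi}^1_1$ sets $P_n$, $P_\om$, $\check P_\om$ via the template of Proposition~\ref{prop:lnpwo-approximable} merely fills in definability bookkeeping that the paper leaves to its algorithmic formalism. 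The one step you should make explicit, as the paper does, is the normalization that (after re-timing the approximation, e.g.\ enumerating into $A^j_n$ only at stages of the form $\om^2\cdot\alpha+\om\cdot j+n$) at most one pair $(j,n)$ receives an enumeration per stage and the approximations are nested in $n$; without this, a diverging row could have its depth jump by two at a time, so its parity guess need not flip infinitely often and your identification of $\bigcap_n P_n$ with the set of inputs on which some row diverges would fail.
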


\begin{proof}
Let $P\in D'_{\ell\times\om}(\tpbf{\Pi}^1_1)$ be given.
Then, $P$ is of the form $c\diff'_{\ell\times\om}[a/A]$ for some $\om$-change $\ell\times\om$ matrix $A=(A^j_n)$ and $\ell\times\om$ matrix $a=(a^j_n)$, where $c=0$ and $a^j_n={\sf par}(n)$.
To simplify our argument, one can assume that for any $x$ and $\alpha$ there are at most one $(j,n)$ such that $x$ is enumerated into $A^j_n$ at stage $\alpha$; that is, $x\in A^j_n[\alpha]$ but $x\not\in A^j_n[\beta]$.
For instance, one can assume that we only enumerate something into $A^j_n$ at stage $\om^2\cdot\alpha+\om\cdot j+n$ for some $\alpha$.
We construct a guessing sequence $D$ of type $\Delta(\om+1)$.

Our guessing algorithm to compute $P(x)$ can be described as follows:
\begin{itemize}
\item At each stage $\alpha$, starting from the top row, one can calculate the current value $v_j(x)[\alpha]$ of $v_j$ for each $j\leq\ell$.
Indeed, it is easy to check that $(\alpha,k,x)\mapsto v_k(x)[\alpha]$ is Borel.
\item As the first case, if mind-changes occur infinitely often at some row $k$, then the guess is set to $v_k(x)[\alpha]$, and the computation terminates.
The condition (2) in Definition \ref{def:om-change} guarantees that at most one row is active at any stage $\alpha$, and thus, there is at most one row $j$ at which mind-changes occur infinitely often at $\alpha$.
Moreover, the condition (2) inductively ensures that the value of $v_j(x)$ will not change after stage $\alpha$ for any $j\leq k$, so the guess $v_k(x)[\alpha]$ matches the output value $P(x)$.
\item As the second case, if the mind-changes has not yet occurred infinitely many times at any row, then the algorithm currently guesses that the output value of $P(x)$ is $v_{\ell}(x)[\alpha]$.
Since $v_\ell(x)$ only changes when mind-changes occur at some row, and there are only a finite number of rows, the number of times of mind-changes has is kept finite in this case.
\end{itemize}

To be more precise, first check if there exists $k$ such that $x\in\bigcap_{n<\om} A^k_m[\alpha]$.
If true, this is the first case.
If $\alpha$ is the least such stage, and $k$ is the least such row, then our algorithm returns $v_k(x)$. 
By the condition (2) in Definition \ref{def:om-change}, since $x\in A^k_0[\alpha_0]$ we have $A^j_n[\alpha]=A^j_n$ for any $j<n$.
This means that $v_k(x)[\alpha]=v_k(x)$, and $k$ is the least row such that $x\in\bigcap_{n<\om} A^k_m$.
Hence $c\diff'_{k<\om}[a/A](x)=v_k(x)$ by definition.
If there exists no $k$ such that $x\in\bigcap_{n<\om} A^k_m[\alpha]$, then this is the second case.
If this is true for any stage $\alpha$, then $x\not\in\bigcap_{n<\om} A^k_m$ for any $k<\ell$, so the output of our guessing algorithm converges to $v_\ell(x)$.
In this case, by the definition of $\diff'$, we also have $c\diff'_{k<\om}[a/A](x)=v_\ell(x)$.
Hence, our algorithm correctly guess the value of $P(x)=c\diff'_{k<\om}[a/A](x)$.

As mentioned above, the mind-changes in the guess of our algorithm due to the second case occur only a finite number of times, and once the first case is reached, the guess never changes.
Also, in the first case, the guess is determined immediately.
Hence, this is a $\Delta(\om+1)$ guessing process.
This completes the proof.
\end{proof}

To ensure that it is a reasonable pointclass, it should be closed under continuous substitution.

\begin{lemma}\label{lem:om-change-2}
$D'_{\ell\times\om}(\tpbf{\Pi}^1_1)$ is closed under continuous substitution, that is, $B\leq_{\sf W}A\in D'_{\ell\times\om}(\tpbf{\Pi}^1_1)$ implies $B\in D'_{\ell\times\om}(\tpbf{\Pi}^1_1)$.
\end{lemma}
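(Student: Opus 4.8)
The plan is to pull the entire $\om$-change matrix back along the Wadge reduction, keeping the coefficients fixed. Suppose $A=c\diff'_{\ell\times\om}[a/\mathcal{A}]$ for an $\om$-change $\ell\times\om$ matrix $\mathcal{A}=(A^j_n)$ of $\tpbf{\Pi}^1_1$ sets, and that $B=\theta^{-1}[A]$ for a continuous $\theta$. I would define $B^j_n=\theta^{-1}[A^j_n]$; since continuous preimages of $\tpbf{\Pi}^1_1$ sets are $\tpbf{\Pi}^1_1$, each $B^j_n$ is again $\tpbf{\Pi}^1_1$, so $(B^j_n)$ is a candidate matrix. Crucially, I would also fix the approximation of $B^j_n$ to be the pullback of the given approximation of $A^j_n$, namely $B^j_n[\alpha]:=\theta^{-1}[A^j_n[\alpha]]$. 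Because $\theta$ is continuous and each $A^j_n[\alpha]$ is Borel, increasing in $\alpha$, with $\bigcup_\alpha A^j_n[\alpha]=A^j_n$, the sets $B^j_n[\alpha]$ are Borel, increasing in $\alpha$, and satisfy $\bigcup_\alpha B^j_n[\alpha]=\theta^{-1}[A^j_n]=B^j_n$; hence they form a legitimate approximation.

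Next I would check that $(B^j_n)$ is an $\om$-change $\ell\times\om$ matrix with respect to this pulled-back approximation. Condition (1) is immediate, since preimages preserve inclusions, so $(B^j_n)_n$ is decreasing for each fixed $j$. For condition (2), fix $j<k<\ell$, $\alpha\in{\sf WO}$ and $n<\om$; using that $\theta^{-1}$ commutes with intersection,
\[
B^k_0[\alpha]\cap B^j_n=\theta^{-1}[A^k_0[\alpha]]\cap\theta^{-1}[A^j_n]=\theta^{-1}\bigl[A^k_0[\alpha]\cap A^j_n\bigr]\subseteq\theta^{-1}[A^j_n[\alpha]]=B^j_n[\alpha],
\]
where the inclusion is exactly condition (2) for $\mathcal{A}$. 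So both matrix conditions transfer.

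Finally I would verify that the operator commutes with the pullback, i.e.\ $c\diff'_{\ell\times\om}[a/B]=B$ with the same coefficient matrix $a$ and constant $c$. Writing $v_k^B$ and $v_k^A$ for the auxiliary values computed from $B$ and $\mathcal{A}$, I would prove $v_k^B(x)=v_k^A(\theta(x))$ for all $k\leq\ell$ by induction on $k$. The base case is $v_0^B(x)=c=v_0^A(\theta(x))$. For the step, the equivalence $x\in B^k_n\iff\theta(x)\in A^k_n$ gives $\max\{n:x\in B^k_n\}=\max\{n:\theta(x)\in A^k_n\}$ (one exists iff the other does), so the two cases defining $v_{k+1}$ match, using the induction hypothesis in the second case. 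Likewise $x\in\bigcap_{n<\om}B^j_n\iff\theta(x)\in\bigcap_{n<\om}A^j_n$, so $\min\{j<\ell:x\in\bigcap_{n<\om}B^j_n\}$ exists iff the corresponding minimum for $\theta(x)$ does, and they agree. Consequently $c\diff'_{\ell\times\om}[a/B](x)=c\diff'_{\ell\times\om}[a/\mathcal{A}](\theta(x))=A(\theta(x))$, which equals $B(x)$ since $B=\theta^{-1}[A]$. Hence $B=c\diff'_{\ell\times\om}[a/B]\in D'_{\ell\times\om}(\tpbf{\Pi}^1_1)$.

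I expect the only real subtlety to be the treatment of the approximation in condition (2): the definition of an $\om$-change matrix is stated ``w.r.t.\ some approximation'' of $(A^j_n)$, and the argument works precisely because I am free to transport the approximation of $\mathcal{A}$ through $\theta$ rather than using an arbitrary intrinsic approximation of $B^j_n$. The commutation of $\theta^{-1}$ with Boolean operations then makes condition (2) automatic, and everything else is a routine verification that $\theta^{-1}$ preserves the relevant $\max$, $\min$, and intersection data.
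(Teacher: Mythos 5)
Your proof is correct and follows essentially the same route as the paper's: pull the matrix and its approximation back along $\theta$ (setting $B^j_n=\theta^{-1}[A^j_n]$ and $B^j_n[\alpha]=\theta^{-1}[A^j_n[\alpha]]$), observe that conditions (1) and (2) transfer because $\theta^{-1}$ commutes with Boolean operations, and check that the operator $c\diff'_{\ell\times\om}$ commutes with the pullback. The paper states this more tersely (for general functions $f\in cD'_{\ell\times\om}(\tpbf{\Pi}^1_1)$ and $g=f\circ\theta$), whereas you spell out the verification of condition (2) and the induction on the auxiliary values $v_k$, which the paper leaves implicit.
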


\begin{proof}
More generally, let $f\in cD'_{\ell\times\om}(\tpbf{\Pi}^1_1)$ be given, and assume that $g=f\circ\theta$ for some continuous function $\theta$.
It suffices to show that $g\in cD'_{\ell\times\om}(\tpbf{\Pi}^1_1)$.
Then, $f$ is of the form $c\diff'_{\ell\times\om}[a/A]$ for some $\om$-change $\ell\times\om$ matrix $A=(A^j_n)$ and $\ell\times\om$ matrix $a=(a^j_n)$.
Then, put $B^j_n=\theta^{-1}[A^j_n]$, and then $B^j_n[\alpha]=\theta^{-1}[A^j_n[\alpha]]$ yields an approximation of $B^j_n$ for any $j<\ell$ and $n\in\om$.
The property that $A$ is an $\om$-change matrix is inherited by $B=(B^j_n)$.
Moreover, one can see that $g=c\diff'_{\ell\times\om}[a/B]$ since $\theta^{-1}[\bigcap_{n<\om}A^k_n]=\bigcap_{n<\om}\theta^{-1}[A^k_n]$, and $\theta^{-1}[A^k_n\setminus A^k_{n+1}]=\theta^{-1}[A^k_n]\setminus\theta^{-1}[A^k_{n+1}]$.
Therefore, $g\in cD'_{\ell\times\om}(\tpbf{\Pi}^1_1)$.
\end{proof}

\begin{lemma}\label{lem:om-change-3}
For any $\ell<\om$, $D'_{\ell\times\om}(\tpbf{\Pi}^1_1)$ is strictly closed under $\om_1$-pwo coproduct.
\end{lemma}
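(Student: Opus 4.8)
The plan is to prove that $\bigsqcup_{\alpha\in{\sf WO}}A_\alpha\in\Delta$ for every uniform $\Delta$ collection $(A_\alpha)_{\alpha\in{\sf WO}}$, where (abbreviating as in the other proofs) $\Delta=\Delta(D'_{\ell\times\om}(\tpbf{\Pi}^1_1))$. Since $\Delta=D'_{\ell\times\om}(\tpbf{\Pi}^1_1)\cap\neg D'_{\ell\times\om}(\tpbf{\Pi}^1_1)$, it suffices to show separately that $\bigsqcup_\alpha A_\alpha$ and its complement both lie in $D'_{\ell\times\om}(\tpbf{\Pi}^1_1)$. By uniformity I would first fix $\om$-change $\ell\times\om$ matrices $P=(P^j_n)$ and $\check P=(\check P^j_n)$ of $\tpbf{\Pi}^1_1$ sets on the product space, together with their approximations, such that for every $\alpha\in{\sf WO}$ one has $A_\alpha(x)=0\diff'_{\ell\times\om}[a/P^{[\alpha]}](x)$ and $\neg A_\alpha(x)=0\diff'_{\ell\times\om}[a/\check P^{[\alpha]}](x)$, where $S^{[\alpha]}$ is the $\alpha$-section and $a^j_n={\sf par}(n)$.

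For the positive inclusion I would set $Q^j_n=\pi_0^{-1}[{\sf WO}]\cap P^j_n$, approximated by $Q^j_n[s]=\{(\alpha,x):\alpha\in{\sf WO}[s]\wedge(\alpha,x)\in P^j_n[s]\}$. Condition (1) of Definition \ref{def:om-change} is immediate, and condition (2) is inherited from $P$: if $(\alpha,x)\in Q^k_0[s]\cap Q^j_n$ with $j<k$, then $\alpha\in{\sf WO}[s]$ and $(\alpha,x)\in P^k_0[s]\cap P^j_n$, so $(\alpha,x)\in P^j_n[s]$ and hence $(\alpha,x)\in Q^j_n[s]$. A two-case split then gives $\bigsqcup_\alpha A_\alpha=0\diff'_{\ell\times\om}[a/Q]$: for $\alpha\in{\sf WO}$ we have $Q^{[\alpha]}=P^{[\alpha]}$, reproducing $A_\alpha$, while for $\alpha\notin{\sf WO}$ every $Q^j_n$ vanishes, so all rows are empty and the output is $v_\ell=c=0$.

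The complement $\neg\bigsqcup_\alpha A_\alpha=\{(\alpha,x):\alpha\notin{\sf WO}\vee x\in\neg A_\alpha\}$ is the delicate direction: I again gate $\check P$ by $\pi_0^{-1}[{\sf WO}]$, but must additionally force the output value $1$ on $\pi_0^{-1}[\neg{\sf WO}]$. A $\tpbf{\Pi}^1_1$ column can only \emph{exclude} $\{\alpha\notin{\sf WO}\}$, never isolate it, so the value $1$ there must be produced by making the first row end at an \emph{odd} level on $\pi_0^{-1}[\neg{\sf WO}]$. The naive two-column prepend used for $D^\ast_\om(\tpbf{\Pi}^1_1)$ in Proposition \ref{prop:inf-dd-pwo} does not transfer verbatim, precisely because the value convention here (value ${\sf par}(n)$, so an even level outputs $0$) is opposite to that of the decreasing difference hierarchy. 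My fix is to prepend \emph{three} structural columns and offset the genuine part by one, so that the net shift of the genuine columns stays even: put $\check Q^0_0=\check Q^0_1=\om^\om$, $\check Q^0_2=\pi_0^{-1}[{\sf WO}]$, $\check Q^0_{3+n}=\pi_0^{-1}[{\sf WO}]\cap\check P^0_{n+1}$, and $\check Q^k_n=\pi_0^{-1}[{\sf WO}]\cap\check P^k_n$ for $1\le k<\ell$. On $\pi_0^{-1}[\neg{\sf WO}]$ the first row reaches level $1$, giving output $1$; on $\{\alpha\in{\sf WO}\}$ the first-row value $v_1$ agrees with that of $\check P^{[\alpha]}$, since the degenerate case ($\max\check P^{0,[\alpha]}\le 0$) lands at the even level $2$ (value $0=v_1$) and a genuine maximum $m\ge 1$ lands at level $m+2$ with ${\sf par}(m+2)={\sf par}(m)$.

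Checking this is the main point, and it is where I would spend the most care. One verifies that $\bigcap_n\check Q^0_n=\pi_0^{-1}[{\sf WO}]\cap\bigcap_n\check P^0_n$, so the first row of $\check Q$ makes an $\om$-change exactly when $\check P^0$ does; that dropping $\check P^0_0$ is harmless because level $0$ and "no level" both yield $0$ under this convention; that the remaining rows are literally the gated rows of $\check P$, whence the whole tower $v_1,\dots,v_\ell$ matches that of $\check P^{[\alpha]}$ and the output equals $\neg A_\alpha(x)$; and that conditions (1),(2) survive, the latter again by combining condition (2) for $\check P$ with $s\in{\sf WO}$. This yields $\neg\bigsqcup_\alpha A_\alpha=0\diff'_{\ell\times\om}[a/\check Q]\in D'_{\ell\times\om}(\tpbf{\Pi}^1_1)$, and combined with the positive direction gives $\bigsqcup_\alpha A_\alpha\in\Delta$. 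A cleaner alternative, if the bookkeeping above proves awkward, is induction on $\ell$: the base case is the identity $D'_{1\times\om}(\tpbf{\Pi}^1_1)=D^\ast_\om(\tpbf{\Pi}^1_1)$, which is strictly closed under $\om_1$-pwo coproduct by Proposition \ref{prop:inf-dd-pwo}, and the inductive step peels off the first row as a $D^\ast_\om$-process feeding its output as the base value of a $D'_{(\ell-1)\times\om}$-process.
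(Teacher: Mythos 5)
Your proposal is correct, and its positive half coincides with the paper's proof: the paper likewise forms the entrywise gated matrix $B^j_n=\{(\alpha,x):\alpha\in{\sf WO}\,\wedge\,x\in A^{\alpha,j}_n\}$ (your $Q^j_n=\pi_0^{-1}[{\sf WO}]\cap P^j_n$), checks condition (2) of Definition \ref{def:om-change} by combining condition (2) for the given matrices with the approximation of ${\sf WO}$, and verifies $\diff'_{\ell\times\om}B=\bigsqcup_{\alpha\in{\sf WO}}A_\alpha$ by the same two-case split on $\alpha\in{\sf WO}$. The genuine difference is that the paper stops there: as written, its proof only exhibits $\bigsqcup_\alpha A_\alpha$ as a member of $D'_{\ell\times\om}(\tpbf{\Pi}^1_1)$, whereas strict closure in the sense of Definition \ref{def:uniform-delta} requires membership in $\Delta$, i.e.\ also a representation of the complement $\{(\alpha,x):\alpha\notin{\sf WO}\,\vee\,x\notin A_\alpha\}$ inside $D'_{\ell\times\om}(\tpbf{\Pi}^1_1)$. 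That half --- which you rightly single out as the delicate one, since a gated matrix can only output the default value $0$ on $\pi_0^{-1}[\neg{\sf WO}]$ --- is absent from the paper's proof of this lemma; the paper carries it out explicitly only in the analogous Proposition \ref{prop:inf-dd-pwo} for $D^\ast_\eta(\tpbf{\Pi}^1_1)$, via the two-column prepend $\check Q_0=\om^\om$, $\check Q_1=\pi_0^{-1}[{\sf WO}]$, $\check Q_{2+\xi}=\pi_0^{-1}[{\sf WO}]\cap\check P_\xi$. Your three-column prepend is the correct adaptation of that trick to the $a^j_n={\sf par}(n)$ convention of $\diff'$: your case analysis (empty first row and $\max=0$ both land on level $2$ with value $0$; a maximum $m\ge 1$ lands on level $m+2$ with ${\sf par}(m+2)={\sf par}(m)$; $\bigcap_n\check Q^0_n=\pi_0^{-1}[{\sf WO}]\cap\bigcap_n\check P^0_n$ so the $\om$-change case returns $v_0=0$ on both sides; and $\alpha\notin{\sf WO}$ gives maximum level $1$, hence output $1$) is complete, and condition (2) survives exactly as you argue. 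So your write-up actually delivers the $\Delta$-membership that the lemma asserts and that Lemma \ref{lem:steel-fournier-game} later consumes, which the paper's own text leaves implicit. Your fallback route by induction on $\ell$ (base case $D'_{1\times\om}(\tpbf{\Pi}^1_1)=D^\ast_\om(\tpbf{\Pi}^1_1)$, which indeed holds after a one-column shift reconciling the opposite parity conventions, then Proposition \ref{prop:inf-dd-pwo}) is plausible but would still need the ``feed the output as base value'' step formalized; the direct construction is the one to keep.
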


\begin{proof}
Put $\Delta=\Delta(D'_{\ell\times\om}(\tpbf{\Pi}^1_1))$.
Let a uniform $\Delta$-collection $(P_\alpha)_{\alpha\in{\sf WO}}$ be given.
Then it is obtained by a collection of $\om$-change matrices $A_\alpha=(A^{\alpha,j}_n)$.
Put $B_{n}^{j}=\bigsqcup_{\alpha\in{\sf WO}}A^{\alpha,j}_n$ for each $n<\om$.
Then, $B_n^j$ is $\tpbf{\Pi}^1_1$ for each $n<\om$ since the $\om_1$-pwo-coproduct of a uniform collection of $\tpbf{\Pi}^1_1$ sets is $\tpbf{\Pi}^1_1$.
Moreover, $B=(B^{j}_n)_{j,n}$ is an $\om$-change matrix:
For the condition (2), if $j<k$ and $(\alpha,x)\in B_0^k[\beta]\cap B^j_n$ then $x\in A_0^{\alpha,k}[\beta]\cap A^{\alpha,j}_n\subseteq A^{\alpha,j}_n[\beta]$, so $(x,\alpha)\in B^j_n[\beta]$.

To see the equality $\diff'_{\ell\times\om}B=\bigsqcup_{\alpha\in{\sf WO}}P_\alpha$, let $(\alpha,x)$ be given.
Clearly, if $\alpha\not\in{\sf WO}$ then $(\alpha,x)\not\in B_0^j$, and therefore, $x\not\in\diff'_{\ell\times\om}B$.
If $\alpha\in{\sf WO}$, then $(\alpha,x)\in B_n^j$ if and only if $x\in A_n^{\alpha,j}$.
Therefore, $(\alpha,x)\in\diff'_{\ell\times\om}B$ if and only if $x\in\diff'_{\ell\times\om}A_\alpha=P_\alpha$.
This completes the proof.
\end{proof}

\begin{lemma}\label{lem:om-change-4}
The hierarchy $(D'_{\ell\times\om}(\tpbf{\Pi}^1_1))_{\ell<\om}$ does not collapse; that is, for any $k<\ell<\om$, $D'_{\ell\times\om}(\tpbf{\Pi}^1_1)\setminus D'_{k\times\om}(\tpbf{\Pi}^1_1)$ is nonempty.
\end{lemma}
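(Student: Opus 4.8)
The plan is to produce, for each $\ell$, an explicit set $C_\ell\in D'_{\ell\times\om}(\tpbf{\Pi}^1_1)\setminus D'_{(\ell-1)\times\om}(\tpbf{\Pi}^1_1)$; by the obvious monotonicity $D'_{k\times\om}(\tpbf{\Pi}^1_1)\subseteq D'_{\ell\times\om}(\tpbf{\Pi}^1_1)$ for $k\leq\ell$ this suffices. The monotonicity itself I would first record by padding a $k\times\om$ matrix with trivial empty rows $A^j_n=\emptyset$: such a row never escapes and never traps $x$ in $\bigcap_n A^j_n$, so it leaves the value $v_{j+1}=v_j$ unchanged and is never selected as the first diverging row, while condition (2) of Definition \ref{def:om-change} holds vacuously. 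Before constructing $C_\ell$ it is convenient to reread the operator $c\diff'_{\ell\times\om}$ as a length-$\ell$ fallback process: scanning the rows $0,1,\dots,\ell-1$ in order, an escaping row $j$ (one with $m_j=\max\{n:x\in A^j_n\}$ defined) overwrites the running value by ${\sf par}(m_j)$, a missing row leaves it unchanged, and the first diverging row (one with $x\in\bigcap_n A^j_n$) halts the scan and outputs the current running value; if no row diverges, the final running value $v_\ell$ is returned. Condition (2) is exactly what guarantees this sequential top-down reading is well defined.

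The separating set is built to realise a fallback chain of genuine depth $\ell$. Using that ${\sf WO}$ is $\tpbf{\Pi}^1_1$-complete, a single ${\sf WO}$-coordinate serves as a $\tpbf{\Pi}^1_1$ switch whose moment of activation can be made any prescribed countable ordinal; stacking $\om$ such switches per slot produces a decreasing $\tpbf{\Pi}^1_1$ sequence whose escape level, or divergence, is freely controllable. I would lay out $\ell$ independent slots of this form and let $C_\ell$ be the set of inputs whose length-$\ell$ fallback evaluation equals $1$, arranging the $\om$-change matrix so that slot $j$ becomes row $j$ and condition (2) holds because slot $j$'s switches are timed to be resolved strictly before slot $j+1$'s become active. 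This directly exhibits an $\ell\times\om$ matrix of $\tpbf{\Pi}^1_1$ sets for $C_\ell$, so $C_\ell\in D'_{\ell\times\om}(\tpbf{\Pi}^1_1)$; the base case $C_1\in D^\ast_\om(\tpbf{\Pi}^1_1)=D'_{1\times\om}(\tpbf{\Pi}^1_1)$ is a canonical properly $\om$-level set, manifestly not in the trivial class $D'_{0\times\om}(\tpbf{\Pi}^1_1)=\{\emptyset,\om^\om\}$.

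To show $C_\ell\notin D'_{(\ell-1)\times\om}(\tpbf{\Pi}^1_1)$ I would argue by induction on $\ell$, deliberately avoiding Wadge games so that the proof stays in ZFC (the sets involved are genuinely $\tpbf{\Pi}^1_1$, so Wadge determinacy is unavailable without large-cardinal hypotheses). Suppose $C_\ell=c\diff'_{(\ell-1)\times\om}[a/B]$ for some $(\ell-1)\times\om$ matrix $B$. I would restrict attention to the Borel family of inputs on which the first slot of $C_\ell$ is driven into a fixed behaviour, and use the continuity of the coordinatewise dependence together with $\tpbf{\Sigma}^1_1$-boundedness to show that on this family row $0$ of $B$ must resolve (escape or miss, with a determined value) before the remaining slots come into play; peeling off this row turns the given $(\ell-1)$-row representation into an $(\ell-2)$-row representation of the depth-$(\ell-1)$ set $C_{\ell-1}$, contradicting the inductive hypothesis.

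The main obstacle is precisely this row-peeling step: a priori row $0$ of $B$ need not behave uniformly on the restricted family, and the reset-to-$c$ at a divergence (the $\Delta(\om+1)$ feature) can make a row's contribution reappear. I would handle it by a constructive diagonalization: given the candidate matrix $B$ and an approximation of it, build an input using the ${\sf WO}$-switches with adaptively chosen activation ordinals --- taken large enough to defeat the finitely much information $B$ has committed to at each stage --- that forces the depth-$\ell$ fallback value of $C_\ell$ to disagree with the depth-$(\ell-1)$ value computed by $B$. The pigeonhole at the heart of the contradiction is that $\ell-1$ rows can isolate at most $\ell-1$ nested divergence levels, whereas $C_\ell$ is engineered to require $\ell$; making this quantitative against an arbitrary continuous, or $\tpbf{\Pi}^1_1$-measurable, wiring of $B$ is the delicate point.
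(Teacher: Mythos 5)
Your proposal has a genuine gap at exactly the point you flag yourself: the proof that $C_\ell\notin D'_{(\ell-1)\times\om}(\tpbf{\Pi}^1_1)$ is never actually carried out. The row-peeling step --- restricting to a Borel family on which row $0$ of an arbitrary candidate matrix $B$ ``must resolve before the remaining slots come into play,'' and then converting the given $(\ell-1)$-row representation into an $(\ell-2)$-row representation of $C_{\ell-1}$ --- is asserted as a plan, not proved. Nothing forces the adversary's rows to align with your slot structure: condition (2) of Definition \ref{def:om-change} constrains the timing of $B$'s rows relative to \emph{each other}, but says nothing about how they interleave with the activation ordinals of your ${\sf WO}$-switches, and the reset-to-$c$ feature at a divergence lets a row's influence reappear after it seemed spent. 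The ``constructive diagonalization with adaptively chosen activation ordinals'' you offer as a fallback is precisely the content that would have to be written down, and you concede you cannot make it quantitative against an arbitrary continuous (or $\tpbf{\Pi}^1_1$-measurable) wiring of $B$. So the hardest part of the lemma is missing, and it is not clear the induction can be pushed through in this form.

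For comparison, the paper sidesteps all of this with a universal-set-plus-diagonalization argument, which is short and fully effective (so your worry about Wadge determinacy never arises). The one real obstacle there is that being an $\om$-change matrix is not a syntactic property one can enumerate; the paper's trick is to take an arbitrary coded $\ell\times\om$ matrix of $\tpbf{\Pi}^1_1$ sets and repair it on the fly --- once a mind-change occurs in a lower row, all rows above it are frozen --- so that every code $\ep$ yields an $\om$-change matrix $B_\ep$ and every $\om$-change matrix arises as some $B_\ep$. This gives a universal set $G=\{(\ep,x):x\in\diff'_{\ell\times\om}B_\ep\}$ in $D'_{\ell\times\om}(\tpbf{\Pi}^1_1)$; the diagonal set $Q=\{x:(x,x)\notin G\}$ then lies outside $D'_{\ell\times\om}(\tpbf{\Pi}^1_1)$ by the usual self-reference argument, while $Q$ lies inside $\Delta(D'_{(\ell+1)\times\om}(\tpbf{\Pi}^1_1))$ because the dual class embeds into the next level by adding a topmost row, and $Q\leq_{\sf W}\neg G$ via $x\mapsto(x,x)$ together with Lemma \ref{lem:om-change-2}. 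Your padding observation for monotonicity then yields the statement for all $k<\ell$. If you want to salvage your approach, the lesson is that explicit hard sets are unnecessary here: universality plus closure under continuous substitution does the separation for you.
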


\begin{proof}
We first construct a universal $D'_{\ell\times\om}(\tpbf{\Pi}^1_1)$ set $G$.
The existence of a universal $\tpbf{\Pi}^1_1$ set clearly yields a total representation of all $\ell\times\om$ matrices of $\tpbf{\Pi}^1_1$ sets which are not necessarily $\om$-change matrices.
Given $\ep\in\om^\om$, let $(A^j_n)$ be the $\ell\times\om$ matrix coded by $\ep$.
Then, define an $\ell\times\om$ matrix $(B^j_n)$ as follows:
Given $\alpha<\om_1$, if $x\not\in B_0^k[\alpha]$ for any $k>j$ then we declare that $x\in B_n^j[\alpha]$ if and only if $x\in A_n^j[\alpha]$.
If $x\in B_0^k[\alpha]$ for some $k>j$, declare that $x\in B_n^j[\alpha]$ if and only if $x\in B_n^j[\beta]$ for some $\beta<\alpha$.
That is, once a mind-change occurs in a lower row $k>j$, no more changes in the row $j$ will occur.
Then, put $B_n^j=\bigcup_{\alpha<\om_1}B_n^j[\alpha]$, and then it is easy to see that $B_\ep=(B_n^j)_{(n,j)\in\ell\times\om}$ is an $\om$-change matrix of $\tpbf{\Pi}^1_1$ sets.
Clearly, for any $\om$-change $\ell\times\om$ matrix $A$ there exists $\ep$ such that $B_\ep=A$.
We define $G$ as the set of all $(\ep,x)$ such that $x\in\diff'_{\ell\times\om} B_\ep$.
Note that $G\in D'_{\ell\times\om}(\tpbf{\Pi}^1_1)$ since $G$ is of the form $\diff'_{\ell\times\om} C$ for the $\om$-change matrix $C=(C^j_n)$ defined by $C^j_n=\{(\ep,x):x\in B^{\ep,j}_n\}$, where $B_\ep=(B^{\ep,j}_n)$.

Next, it is easy to see that the dual class of $D'_{\ell\times\om}(\tpbf{\Pi}^1_1)$, i.e., $\neg D'_{\ell\times\om}(\tpbf{\Pi}^1_1)$, is also included in $\Delta(D'_{(\ell+1)\times\om}(\tpbf{\Pi}^1_1))$, by shifting the components of each row by one, and by adding the topmost row which always guesses $1$.
Hence, it remains to show that $\neg D'_{\ell\times\om}(\tpbf{\Pi}^1_1)$ is not included in $D'_{\ell\times\om}(\tpbf{\Pi}^1_1)$.
The rest of the proof is an easy diagonalization argument.
Let us consider $Q=\{x:(x,x)\not\in G\}$.
We claim that $Q$ does not belong to $D_{\ell\times\om}(\tpbf{\Pi}^1_1)$.
Otherwise, there exists $x$ such that $Q=\diff'_{\ell\times\om}B_x$.
However, $x\in Q$ if and only if $(x,x)\not\in G$ if and only if $x\not\in \diff'_{\ell\times\om}B_x$, a contradiction.
This concludes the proof.
\end{proof}

Lemmas \ref{lem:om-change-1}, \ref{lem:om-change-2}, \ref{lem:om-change-3}, and \ref{lem:om-change-4}, combined with Lemma \ref{lem:steel-fournier-game}, imply that the Wadge rank of $\Delta(D^\ast_{\om+1}(\tpbf{\Pi}^1_1))$ is at least $\om_2\cdot \om$.
It is straightforward to consider the transfinite version of this argument.
That is, for a limit ordinal $\lambda$, one may define $D'_{\lambda\times\om}(\tpbf{\Pi}^1_1)$ as the class of all sets which can be written as countable disjoint unions of sets from $D'_{\lambda[n]\times\om}(\tpbf{\Pi}^1_1)$, $n<\om$, where $(\lambda[n])_{n<\om}$ a fundamental sequence for $\lambda$.
For a successor ordinal $\xi=\zeta+1$, in order to define $D'_{\xi\times\om}(\tpbf{\Pi}^1_1)$, one can simply add one more row to $D'_{\zeta\times\om}(\tpbf{\Pi}^1_1)$.

As a consequence, inside $\Delta(D_{\om+1}^\ast(\tpbf{\Pi}^1_1))$, there are at least $\om_1$ many classes strictly closed under $\om_1$-pwo coproduct.
Hence, by Lemma \ref{lem:steel-fournier-game}, we conclude that the Wadge rank of $\Delta(D_{\om+1}^\ast(\tpbf{\Pi}^1_1))$ is at least $\om_2\cdot\om_1$.
However, by using $\om_1$-pwo coproduct to combine these $\om_1$ many classes, we can create a new class inside $\Delta(D_{\om+1}^\ast(\tpbf{\Pi}^1_1))$.
This concludes the proof of Theorem \ref{thm:Wadge-rank-om1}.
By repeating this process, it seems possible to construct $\om_1+\om_1$ many, $\om_1^2$ many, or $\om_2$ many different classes strictly closed under $\om_1$-pwo coproduct.
If this is the case, by Lemma \ref{lem:steel-fournier-game}, one can show that the Wadge rank of $\Delta(D_{\om+1}^\ast(\tpbf{\Pi}^1_1))$ is at least $\om_2^2$.

\begin{question}
Under ${\sf AD}$, is the Wadge rank of $\Delta(D_{\om+1}^\ast(\tpbf{\Pi}^1_1))$ at least $\om_2^2$?
\end{question}

One may also ask a similar question:

\begin{question}
Under ${\sf AD}$, is the Wadge rank of $\Delta(D_{\om+n}^\ast(\tpbf{\Pi}^1_1))$ at least $\om_2^{n+1}$?
\end{question}


We now move to the next level of $(D_{\om+n}^\ast(\tpbf{\Pi}^1_1))_{n<\om}$.
It is reasonable to ask the following question.

\begin{question}\label{que:rank-forward-backward}
Under {\sf AD}, calculate the Wadge rank of $\Delta(D^\ast_{\om+\om}(\tpbf{\Pi}^1_1))$.
\end{question}

However, we have the impression that answering this question is incredibly difficult.
This is because we feel that there is also a tremendously vast hierarchy between $(D_{\om+n}^\ast(\tpbf{\Pi}^1_1))_{n<\om}$ and $\Delta(D^\ast_{\om+\om}(\tpbf{\Pi}^1_1))$.
The first step is given by ``$\Pi^1_1$-processes with [forward $\om$]$+$[backward $\om$] mind-changes''.
More precisely, we consider the following $\om+\om$ sequence $(P_\alpha)_{\alpha<\om+\om}$ of $\tpbf{\Pi}^1_1$ sets:
\[P_0\supseteq P_1\supseteq P_2\supseteq\dots\supseteq\bigcap_{n<\om}P_n\supseteq\bigcup_{n<\om}P_{\om+n}\supseteq\dots\supseteq P_{\om+2}\supseteq P_{\om+1}\supseteq P_\om.\]

We call such a sequence {\em type $\omf+\omb$}.
If moreover we have $\bigcap_{n<\om}P_n=\bigcup_{n<\om}P_{\om+n}$, we call it {\em type $\Delta(\omf+\omb)$.}
A type $\omf+\omb$ sequence $(P_\alpha)_{\alpha<\om+\om}$ defines a set $P$ as in the usual difference hierarchy; that is, at the first $\om$ levels, a $\Pi^1_1$ guess proceeds as follows:
\[0\to 1\to 0\to \cdots(\om\mbox{ changes})\cdots 0\]

If the guess changes infinitely many often, then the guess becomes $0$.
After that, we will have a fresh mind-change counter $\om$ controlling our next finite mind-changes.

\begin{question}
Under {\sf AD}, calculate the Wadge rank of $\Delta(\omf+\omb)$.
\end{question}

In general, one can consider ``$\Pi^1_1$-processes with [forward $\om$]$+$[backward $\alpha$] mind-changes'' for any $\alpha<\om_1$.
Then we get the corresponding pointclass $\Delta(\omf+\alpha^{\leftarrow})$, and we still have $\Delta(\omf+\alpha^{\leftarrow})\subseteq\Delta(D^\ast_{\om+\om}(\tpbf{\Pi}^1_1))$ for any $\alpha<\om_1$.
Based on these observations, we conjecture that the answer to Question \ref{que:rank-forward-backward} is at least $\om_2^{\om_2}$, but we do not have a method to calculate this at this time.

\begin{ack}
The author is very grateful to Kenta Sasaki for thorough discussions and valuable comments.
The author's research was partially supported by JSPS KAKENHI Grant Number 19K03602, and the JSPS-RFBR Bilateral Joint Research Project JPJSBP120204809.
\end{ack}

\bibliographystyle{plain}
\bibliography{omega}

\begin{thebibliography}{10}

\bibitem{AJS99}
Andris Ambainis, Sanjay Jain, and Arun Sharma.
\newblock Ordinal mind change complexity of language identification.
\newblock {\em Theoret. Comput. Sci.}, 220(2):323--343, 1999.

\bibitem{AnLo12}
Alessandro Andretta and Alain Louveau.
\newblock Wadge degrees and pointclasses. {I}ntroduction to {P}art {III}.
\newblock In {\em Wadge degrees and projective ordinals. {T}he {C}abal
  {S}eminar. {V}olume {II}}, volume~37 of {\em Lect. Notes Log.}, pages 3--23.
  Assoc. Symbol. Logic, La Jolla, CA, 2012.

\bibitem{AnMa03}
Alessandro Andretta and Donald~A. Martin.
\newblock Borel-{W}adge degrees.
\newblock {\em Fund. Math.}, 177(2):175--192, 2003.

\bibitem{Bec84}
Howard Becker.
\newblock A technique for proving uniformity.
\newblock {\em Proc. Amer. Math. Soc.}, 90(1):103--106, 1984.

\bibitem{Becker88}
Howard Becker.
\newblock A characterization of jump operators.
\newblock {\em J. Symbolic Logic}, 53(3):708--728, 1988.

\bibitem{BGM17}
Laurent Bienvenu, Noam Greenberg, and Benoit Monin.
\newblock Continuous higher randomness.
\newblock {\em J. Math. Log.}, 17(1):1750004, 53, 2017.

\bibitem{pauly-handbook}
Vasco Brattka, Guido Gherardi, and Arno Pauly.
\newblock Weihrauch complexity in computable analysis.
\newblock arXiv 1707.03202, 2017.

\bibitem{Carl19}
Merlin Carl.
\newblock {\em Ordinal Computability: An Introduction to Infinitary Machines}.
\newblock De Gruyter, 2019.

\bibitem{FoPhD}
Kevin Fournier.
\newblock {\em The {W}adge Hierarchy: Beyond {B}orel Sets}.
\newblock PhD thesis, University of Lausanne, 2016.

\bibitem{Fo16}
Kevin Fournier.
\newblock Wadge hierarchy of differences of co-analytic sets.
\newblock {\em J. Symb. Log.}, 81(1):201--215, 2016.

\bibitem{FS93}
R\={u}si\c{n}\v{s} Freivalds and Carl~H. Smith.
\newblock On the role of procrastination in machine learning.
\newblock {\em Inform. and Comput.}, 107(2):237--271, 1993.

\bibitem{GKN21}
Vassilios Gregoriades, Takayuki Kihara, and Keng~Meng Ng.
\newblock Turing degrees in {P}olish spaces and decomposability of {B}orel
  functions.
\newblock {\em J. Math. Log.}, 21(1):2050021, 41, 2021.

\bibitem{HinmanBook}
Peter~G. Hinman.
\newblock {\em Recursion-theoretic hierarchies}.
\newblock Springer-Verlag, Berlin-New York, 1978.
\newblock Perspectives in Mathematical Logic.

\bibitem{JOSW}
S.~Jain, D.~Osherson, J.~Royer, A.~Sharma, and S.~Weinstein.
\newblock {\em Systems That Learn}.
\newblock MIT Press, 1999.

\bibitem{KanBook}
Akihiro Kanamori.
\newblock {\em The higher infinite}.
\newblock Springer Monographs in Mathematics. Springer-Verlag, Berlin, second
  edition, 2009.
\newblock Large cardinals in set theory from their beginnings, Paperback
  reprint of the 2003 edition.

\bibitem{Kih19}
Takayuki Kihara.
\newblock Topological reducibilities for discontinuous functions and their
  structures.
\newblock submitted, available at arXiv:1906.10573.

\bibitem{mos07}
Yiannis~N. Moschovakis.
\newblock {\em Descriptive set theory}, volume 155 of {\em Mathematical Surveys
  and Monographs}.
\newblock American Mathematical Society, Providence, RI, second edition, 2009.

\bibitem{SacksBook}
Gerald~E. Sacks.
\newblock {\em Higher recursion theory}.
\newblock Perspectives in Mathematical Logic. Springer-Verlag, Berlin, 1990.

\bibitem{SOSOA:Simpson}
S.~Simpson.
\newblock {\em {Subsystems of Second Order Arithmetic}}.
\newblock Perspectives in Logic. Cambridge University Press, 2009.

\bibitem{St81}
John~R. Steel.
\newblock Closure properties of pointclasses.
\newblock In {\em Cabal {S}eminar 77--79 ({P}roc. {C}altech-{UCLA} {L}ogic
  {S}em., 1977--79)}, volume 839 of {\em Lecture Notes in Math.}, pages
  147--163. Springer, Berlin-New York, 1981.

\bibitem{Wadge83}
William~Wilfred Wadge.
\newblock {\em Reducibility and Determinateness on the {B}aire Space}.
\newblock ProQuest LLC, Ann Arbor, MI, 1983.
\newblock Thesis (Ph.D.)--University of California, Berkeley.

\end{thebibliography}

\end{document}